\newtheorem{thm}[equation]{Theorem}
\newtheorem{cor}[equation]{Corollary}
\newtheorem{lem}[equation]{Lemma}
\newtheorem{prop}[equation]{Proposition}
\theoremstyle{definition}
\newtheorem{defn}[equation]{Definition}
\theoremstyle{remark}
\newtheorem{rem}[equation]{Remark}
\newcommand{\C}[1]{\mathscr{#1}}
\newcommand{\operad}[1]{\mathrm{Op}(#1)}
\newcommand{\operadf}[1]{\mathrm{OpFunc}(#1)}
\newcommand{\algebra}[2]{\mathrm{Alg}_{#2}(#1)}
\newcommand{\graphs}[2]{\mathrm{Graph}_{#2}(#1)}
\newcommand{\cats}[2]{\mathrm{Cat}_{#2}(#1)}
\newcommand{\ainfcats}[2]{A_\infty\text{-}\mathrm{Cat}_{#2}(#1)}
\def\r{\rightarrow} 
\def\into{\rightarrowtail}
\def\onto{\twoheadrightarrow}
\newcommand{\id}[1]{\mathrm{id}_{#1}}
\newcommand{\mor}[1]{\operatorname{Mor}(#1)}
\def\hom{\operatorname{Hom}}
\def\dos{\mathbf{2}}
\def\ho{\operatorname{Ho}}
\def\star{{\operatorname{St}}}
\def\link{{\operatorname{Lk}}}
\def\st{\stackrel} 
\def\unit{\mathbb{I}} 
\def\To{\longrightarrow}
\def\colim{\mathop{\operatorname{colim}}}
\newcommand{\val}[2]{\widetilde{#2}}
\newcommand{\card}{\operatorname{card}}
\newcommand{\level}[1]{\operatorname{level}(#1)}
\newcommand{\norm}[1]{\parallel\!#1\!\parallel}
\numberwithin{equation}{section}
\begin{document}

\title
{Homotopy theory of non-symmetric operads 
}%
\author{Fernando Muro}%
\address{Universidad de Sevilla,
Facultad de Matem\'aticas,
Departamento de \'Algebra,
Avda. Reina Mercedes s/n,
41012 Sevilla, Spain\newline
\indent \textit{Home page}: \textnormal{\texttt{http://personal.us.es/fmuro}}}
\email{fmuro@us.es}

\thanks{The   author was partially supported
by the Spanish Ministry of Education and
Science under the MEC-FEDER grants  MTM2007-63277 and MTM2010-15831, and by the Government of Catalonia under the grant SGR-119-2009. }
\subjclass[2010]{18D50, 55U35, 18D10, 18D35, 18D20}
\keywords{operad, algebra, model category, enriched $A_{\infty}$-category}

\begin{abstract}
We endow categories of non-symmetric operads with natural model structures. We work with no restriction on our operads and only assume the usual hypotheses for model categories with a symmetric monoidal structure. We also study categories of algebras over these operads in enriched non-symmetric monoidal model categories.
\end{abstract}

\maketitle
\tableofcontents


\section{Introduction}

Operads are well-known devices encoding the laws of algebras defined by multilinear operations and relations, e.g. there are operads $\texttt{Ass}$, $\texttt{Com}$ and $\texttt{Lie}$ whose algebras are associative, commutative and Lie algebras, respectively. Morphisms of operads codify relations between different kinds of algebras, e.g. there are morphisms $\texttt{Lie}\r \texttt{Ass}\r \texttt{Com}$ telling us that any commutative algebra is an associative alegbra, and that commutators in an associative algebra yield a  Lie algebra.

There are two kinds of operads: symmetric and non-symmetric operads. Symmetric operads are needed whenever it is necessary to permute variables in order to describe the laws of the corresponding algebras, e.g. 
$\texttt{Com}$ and $\texttt{Lie}$. Non-symmetric operads are specially useful to deal with algebras in non-symmetric monoidal categories, e.g. given a commutative ring $k$ and a set $S$ which is not a singleton, the category of $k$-modules with object set~$S$, which are collections of $k$-modules indexed by $S\times S$, $M=\{M(x,y)\}_{x,y \in S}$, has a non-symmetric tensor product,
$$(M\otimes_{S} N)(x,y)= \bigoplus_{z\in S}M(z,y)\otimes_k N(x,z),$$
whose associative algebras, i.e. algebras over the operad $\texttt{Ass}$, are $k$-linear categories with object set $S$.

Any object $M$ in a symmetric monoidal category $\C{V}$, such as the category of $k$-modules, has an endomorphism symmetric operad $\texttt{End}_{\C V}(M)$  in $\C{V}$ such that, if $\mathcal{O}$ is another symmetric operad in $\C{V}$, the set of $\mathcal{O}$-algebra structures on $M$ is the set of symmetric operad morphisms $\mathcal{O}\r\texttt{End}_{\C V}(M)$. If $M$ belongs to a non-symmetric monoidal category $\C{C}$ enriched over $\C{V}$, such as the category of $k$-modules with object set $S$, then there is a non-symmetric operad $\texttt{End}_{\C C}(M)$  in $\C{V}$ such that the set of algebra structures on $M$ over a non-symmetric operad $\mathcal{O}$ in $\C{V}$ is the set of non-symmetric operad morphisms $\mathcal{O}\r\texttt{End}_{\C C}(M)$ (Definition \ref{eop}).

When the underlying symmetric monoidal category $\C V$ carries homotopical information, e.g. if we replace $k$-modules with differential graded $k$-modules, one is often more interested in a space of $\mathcal{O}$-algebra structures on $M$ rather than a plain set. Such a space can be constructed by using the powerful machinery developed by Dwyer and Kan \cite{slc, csl, fcha} provided we can place the operads $\mathcal{O}$ and $\texttt{End}_{\C C}(M)$ in an appropriate model category of operads.

Model categories of operads were first cosidered by Hinich in the differential graded context \cite{haha,ehaha}, and by Berger and Moerdijk in a more general setting \cite{ahto}. They dealt with symmetric operads and showed that restrictive hypotheses are necessary to endow the category of all operads with an appropriate model category structure, e.g. when $k$ is a $\mathbb{Q}$-algebra or when the symmetric monoidal structure in $\C{V}$ is cartesian closed and there is a symmetric monoidal fibrant replacement functor.

Motivated by our interest in spaces of differential graded category structures, we consider the non-symmetric case, which surprisingly enough does not need any restrictive hypothesys, just usual hypotheses for model categories with a monoidal structure~\cite{ammmc}.

\begin{thm}\label{elt}
Let $\C{V}$ be a cofibrantly generated closed symmetric monoidal model category. Assume that $\C V$ satisfies the monoid axiom. Moreover, suppose that there are  sets of generating cofibrations and  generating trivial cofibrations in $\C V$ with presentable sources. Then the category $\operad{\C{V}}$ of non-symmetric operads in $\C{V}$ is a cofibrantly generated model category such that a morphism  $f\colon \mathcal{O}\r \mathcal{P}$  in $\operad{\C{V}}$ is a weak equivalence (resp. fibration)  if and only if $f(n)\colon \mathcal{O}(n)\r \mathcal{P}(n)$ is a weak equivalence (resp. fibration) in $\C{V}$ for all~$n\geq 0$. Moreover, if $\C V$ is right proper then so is $\operad{\C{V}}$. Furthermore, if $\C V$ is combinatorial then $\operad{\C V}$ is also combinatorial.
\end{thm}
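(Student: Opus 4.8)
The plan is to obtain the model structure by transfer (lifting) along the free--forgetful adjunction relating operads to $\mathbb{N}$-graded objects of $\C V$. Write $\C V^{\mathbb N}=\prod_{n\geq 0}\C V$ for the category of sequences, equipped with the levelwise (product) cofibrantly generated model structure; its generating cofibrations $I^{\mathbb N}$ and generating trivial cofibrations $J^{\mathbb N}$ are the images of the generators of $\C V$ under the inclusions of the individual arities. The forgetful functor $U\colon\operad{\C V}\r\C V^{\mathbb N}$ sending $\C O$ to $\{\C O(n)\}_{n\geq0}$ admits a left adjoint $F$, the free operad functor. I would \emph{declare} a morphism $f$ in $\operad{\C V}$ to be a weak equivalence or fibration precisely when $Uf$ is one in $\C V^{\mathbb N}$, which is exactly the levelwise condition in the statement, and take cofibrations to be the maps with the left lifting property against trivial fibrations. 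The $2$-out-of-$3$ axiom, retract closure, and one lifting-factorization pair are then formal consequences of the corresponding facts in $\C V^{\mathbb N}$.

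First I would check the formal prerequisites of the standard recognition (Kan transfer) theorem for cofibrantly generated model structures. The category $\operad{\C V}$ is complete and cocomplete: limits are formed levelwise, while cocompleteness follows because the free operad monad on $\C V^{\mathbb N}$ preserves filtered colimits (the monoidal product of $\C V$ being closed), so its category of algebras is cocomplete. The candidate generating sets are $F(I^{\mathbb N})$ and $F(J^{\mathbb N})$. The smallness needed to run the small object argument inside $\operad{\C V}$ is exactly what the hypothesis of \emph{presentable sources} provides: since the free operad monad is finitary, $U$ creates and hence preserves filtered colimits, so presentability of the sources in $\C V$ transports to smallness of the free operads $F(A)$ relative to the relevant cell complexes.

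The heart of the argument, and the step I expect to be the main obstacle, is the \emph{acyclicity condition}: every relative $F(J^{\mathbb N})$-cell complex must be a levelwise weak equivalence. This reduces to analyzing a single pushout $\C O\r\C O\amalg_{F(A)}F(B)$ along the free map on a generating trivial cofibration $j\colon A\r B$, and showing it is levelwise a weak equivalence. I would construct an explicit filtration of this map as a (transfinite) composite of pushouts in $\C V$, indexed by the combinatorics of planar trees whose vertices are decorated by $\C O$ and by copies of $j$; each associated layer is a pushout of a map built by tensoring $j$ with fixed objects of $\C V$. The monoid axiom then guarantees that each such layer, and hence the whole transfinite composite, is a weak equivalence. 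Producing this tree-indexed filtration and verifying that its associated graded consists precisely of maps governed by the monoid axiom is the genuinely technical part; once it is in place, the recognition theorem yields a cofibrantly generated model structure with the stated weak equivalences and fibrations and with generating (trivial) cofibrations $F(I^{\mathbb N})$ and $F(J^{\mathbb N})$.

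Finally I would read off the two supplementary statements. Right properness is immediate: pullbacks, fibrations, and weak equivalences in $\operad{\C V}$ are all detected levelwise, so the property descends directly from $\C V$. For combinatoriality, when $\C V$ is combinatorial it is in particular locally presentable, whence $\C V^{\mathbb N}$ is locally presentable; since the free operad monad is accessible and $\operad{\C V}$ is its category of algebras over a locally presentable category, $\operad{\C V}$ is again locally presentable, and being cofibrantly generated it is therefore combinatorial.
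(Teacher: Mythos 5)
Your proposal is correct and follows essentially the same route as the paper: a transfer argument along the free--forgetful adjunction (the paper invokes \cite[Lemma 2.3]{ammmc}), with smallness supplied by the presentable sources and the filtered-colimit-preserving free operad monad, acyclicity established via an explicit tree-indexed filtration of the pushout $\mathcal{O}\r\mathcal{O}\amalg_{\mathcal{F}(A)}\mathcal{F}(B)$ whose layers are pushouts of maps lying in the monoid-axiom class $K$, and the same levelwise arguments for right properness and combinatoriality. The only difference is that you defer the genuinely technical construction of that filtration, which the paper carries out in Lemma \ref{pind}, Lemma \ref{pond} and Theorem \ref{aux1}, but you identify its structure and its role exactly as the paper does.
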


This theorem can be applied to all examples in \cite{ammmc}, see also the references therein: 
\begin{enumerate}
\item Complexes of modules over a commutative ring $k$ with the usual tensor product of complexes.
\item Simplicial $k$-modules with the levelwise tensor product $\otimes_k$.
\item Modules over a finite-dimensional Hopf algebra $R$ over a field $k$ with the tensor product over $k$, e.g. $R=kG$ the group-ring of a finite group $G$.
\item Symmetric spectra with their smash product, and more generally modules over a commutative ring spectrum.
\item $\Gamma$-spaces with Lydakis' smash product.
\item Simplicial functors with their smash product.
\item $S$-modules with their smash product. 
\end{enumerate}
In particular, Theorem \ref{elt} will also be useful to study spaces of spectral category structures. 

\begin{rem}
Recall from \cite[Definition 1.13 (2)]{adamekrosicky} that an object $X$ of $\C V$ is \emph{presentable} if there exists a cardinal $\lambda$ such that the representable functor $\C{V}(X,-)$ commutes with $\lambda$-filtered colimits in $\C V$. Presentable objects are also called \emph{small} or \emph{compact} in some references.  All objects are presentable in many categories of interest, e.g. in all combinatorial model categories. Actually, up to set theoretical principles 
any cofibrantly generated model category is Quillen equivalent to a combinatorial model category \cite{cgmc}. 
\end{rem}

Categories of algebras over symmetric operads do not always have a model structure with fibrations and weak equivalences defined in the underlying category. Sufficient conditions can be found in \cite{ahto}. In the framework of non-symmetric operads they do.  When both algebras and operads live in the same ambient symmetric monoidal model category $\C V$, satisfying the monoid axiom, this has been recently proved by J. E. Harper \cite[Theorem 1.2]{htmommc}. We here extend this result to algebras in a monoidal model category $\C C$ satisfying the monoid axiom  and appropriately enriched over $\C V$. This is necessary, for instance, to construct model categories of enrieched categories, of enriched $A_\infty$-categories, or of any other categorified algebraic structure, see Section \ref{appl}.

\begin{thm}\label{elt2}
Let $\C V$ and $\C C$ be cofibrantly generated biclosed monoidal model categories.  Suppose $\C V$ is symmetric and $\C C$ has a $\C V$-algebra structure given by a strong braided monoidal functor to the center of $\C C$, $z\colon \C{V}\r Z(\C{C})$, such that the composite functor $$\C{V}\st{z}\To Z(\C{C})\To\C{C}$$ is a left Quillen functor. Moreover, assume that $\C V$ and $\C C$ satisfy the monoid axiom (see Definitions \ref{max} and \ref{nsmax}). Furthermore, suppose that $\C C$ has sets of generating cofibrations and generating trivial cofibrations with presentable source. Let $\mathcal{O}$ be a non-symmetric operad in $\C V$. The category $\algebra{\mathcal{O}}{\C C}$ of $\mathcal{O}$-algebras in $\C{C}$ is a cofibrantly generated model category such that an $\mathcal{O}$-algebra morphism $g\colon A\r B$ is a weak equivalence (resp. fibration) if and only if~$g$ is a weak equivalence (resp. fibration) in $\C C$. Moreover, if $\C C$ is right proper then so is $\algebra{\mathcal{O}}{\C C}$. Furthermore, if $\C C$ is combinatorial then $\algebra{\mathcal{O}}{\C C}$ is also combinatorial.
\end{thm}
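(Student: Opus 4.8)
The plan is to obtain the model structure on $\algebra{\mathcal{O}}{\C C}$ by transfer along the free-forgetful adjunction, exactly in the spirit of Theorem \ref{elt}. The forgetful functor $U\colon \algebra{\mathcal{O}}{\C C}\r \C C$, sending an algebra to its underlying object, admits a left adjoint $F$, the free $\mathcal{O}$-algebra functor, given on an object $X$ of $\C C$ by
$$F(X)=\coprod_{n\geq 0} z(\mathcal{O}(n))\otimes X^{\otimes n},$$
where the tensor factors and the reassociations implicit in the operad composition make sense precisely because $z$ lands in the center $Z(\C C)$ and is braided monoidal. Equivalently, $\mathcal{O}$-algebras are the algebras over the monad $T=UF$ on $\C C$; since the tensor product of $\C C$ preserves filtered colimits in each variable, $T$ is accessible, so $\algebra{\mathcal{O}}{\C C}$ is complete (limits being created by $U$) and cocomplete. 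The candidate generating cofibrations and generating trivial cofibrations are $FI$ and $FJ$, where $I$ and $J$ generate $\C C$, and the candidate fibrations and weak equivalences are the maps sent by $U$ into the fibrations and weak equivalences of $\C C$.

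Next I would invoke the standard transfer (lifting) theorem for cofibrantly generated model categories: to equip $\algebra{\mathcal{O}}{\C C}$ with the stated structure it suffices to verify \emph{(a)} a smallness condition on the domains of $FI$ and $FJ$, and \emph{(b)} the acyclicity condition that $U$ carries every relative $FJ$-cell complex to a weak equivalence of $\C C$. Condition \emph{(a)} follows from the hypothesis that the sources of $I$ and $J$ are presentable, together with the fact that $U$ preserves the filtered colimits along which cell complexes are assembled. The entire weight of the theorem thus falls on the acyclicity condition \emph{(b)}.

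The main obstacle is condition \emph{(b)}, which I would establish by a pushout-filtration argument in the manner of Schwede--Shipley and Harper \cite{htmommc}. Given a generating trivial cofibration $f\colon X\r Y$ in $J$ and a pushout of $\mathcal{O}$-algebras attaching $F(X)\r F(Y)$ along some $F(X)\r A$, one filters the underlying object of the pushout $B$ as a transfinite composite $A=B_0\r B_1\r B_2\r\cdots$ in $\C C$, where each stage $B_{q-1}\r B_q$ is a cobase change of a map built from the $q$-fold pushout-product power of $f$, tensored with the objects $z(\mathcal{O}(n))$ and with copies of $A$. The decisive simplification in the non-symmetric setting is that these layers carry no coinvariants under symmetric group actions, so no cofibrancy or characteristic-zero hypothesis on $\mathcal{O}$ is required: each attaching map is, up to tensoring with a fixed object of $\C C$, of the form (trivial cofibration)$\otimes$(object of $\C C$), and the monoid axiom for $\C C$ (Definition \ref{nsmax}), used in tandem with the monoid axiom for $\C V$ (Definition \ref{max}) and the left Quillen property of $\C{V}\st{z}\To Z(\C C)\To\C C$, guarantees that the class of maps generated from such maps under pushout and transfinite composition consists of weak equivalences. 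Closing the $U(FJ)$-cell complexes under transfinite composition and applying two-out-of-three then yields \emph{(b)}; the centrality of the image of $z$ is exactly what lets the operadic multiplications be moved past tensor factors so that these pushout-product estimates apply in the biclosed, non-symmetric category $\C C$.

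Finally, the remaining assertions transfer formally. Since weak equivalences, fibrations, and pullbacks in $\algebra{\mathcal{O}}{\C C}$ are all created by $U$, right properness of $\C C$ immediately gives right properness of $\algebra{\mathcal{O}}{\C C}$. If $\C C$ is combinatorial then it is locally presentable, and the category of algebras over the accessible monad $T$ on a locally presentable category is again locally presentable; combined with the cofibrant generation just established, this shows that $\algebra{\mathcal{O}}{\C C}$ is combinatorial.
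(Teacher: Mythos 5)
Your proposal is correct and follows essentially the same route as the paper: transfer along the free--forgetful adjunction via the Schwede--Shipley lifting lemma, with the acyclicity condition established by filtering the pushout of algebras along a free map into a transfinite composite whose layers are cobase changes of the mixed pushout-products $z(\mathcal{O}(n))\otimes k_1\odot\cdots\odot k_n$ (some $k_i=f$, the rest $0\r A$), i.e.\ exactly the class $K'$ of Definition \ref{nsmax}; this filtration is the content of the paper's Lemma \ref{pind2} and Theorem \ref{aux2}, and your treatment of smallness, completeness/cocompleteness, right properness and combinatoriality matches the paper's. The only loose phrase is that each attaching map is ``up to tensoring with a fixed object, of the form (trivial cofibration)$\otimes$(object)'': in the non-symmetric setting the copies of $A$ are interleaved between the pushout-product factors and cannot be gathered on one side, which is precisely why the monoid axiom for $\C C$ is formulated for the interleaved class $K'$ rather than reducing to Definition \ref{max}; since you invoke Definition \ref{nsmax} anyway, the argument stands.
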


The notion of monoidal model category in \cite[Definition 3.1]{ammmc} makes sense with no modification in the non-symmetric context, see Definition \ref{defm}.

Any operad morphism $\phi\colon\mathcal{O}\r\mathcal{P}$ induces a change of operad functor
$$\phi^*\colon\algebra{\mathcal{P}}{\C C}\To \algebra{\mathcal{O}}{\C C}$$ by restricting the action of $\mathcal{P}$ to $\mathcal{O}$ along $\phi$. This functor is the identity on underlying objects in $\C C$, hence it preserves 
fibrations and weak equivalences. 
Moreover, the functor $\phi^*$ has a left adjoint $\phi_{*}$, therefore we have a Quillen adjunction,
\begin{equation}\label{qp}
\xymatrix{\algebra{\mathcal{O}}{\C C}\ar@<.5ex>[r]^-{\phi_{*}}& \algebra{\mathcal{P}}{\C C}.\ar@<.5ex>[l]^-{\phi^*}}
\end{equation}
The following result establishes conditions so that this is a Quillen equivalence if $\phi$ is a weak equivalence of operads. These conditions are the non-symmetric analogues of those considered in \cite{ahto} for symmetric operads.

\begin{thm}\label{elt3}
In the conditions of the previous theorem, assume further that $\C C$ is left proper. 
Let $\phi\colon\mathcal{O}\r\mathcal{P}$ be a weak equivalence between operads in $\C{V}$ such that for all $n\geq 0$ the objects $\mathcal{O}(n)$ and $\mathcal{P}(n)$ are cofibrant in $\C V$. Then \eqref{qp} is a Quillen equivalence, in particular the derived adjoint pair is an equivalence between the homotopy categories of algebras, 
$$\xymatrix{\ho\algebra{\mathcal{O}}{\C C}\ar@<.8ex>[r]^-{\mathbb{L}\phi_{*}}& \ho\algebra{\mathcal{P}}{\C C}.\ar@<.2ex>[l]^-{\phi^*}}$$
\end{thm}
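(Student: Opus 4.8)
The plan is to apply the standard recognition criterion for Quillen equivalences and reduce everything to a statement about the unit. Because the right adjoint $\phi^*$ is the identity on underlying objects of $\C C$ and both weak equivalences and fibrations in algebra categories are created in $\C C$ (Theorem \ref{elt2}), the functor $\phi^*$ preserves \emph{and} reflects all weak equivalences. Consequently the adjunction \eqref{qp} is a Quillen equivalence as soon as, for every cofibrant $\mathcal{O}$-algebra $A$, the unit $\eta_A\colon A\r \phi^*\phi_*A$ is a weak equivalence in $\C C$: the reflection hypothesis of the usual criterion is automatic, and since $\phi^*$ preserves weak equivalences the derived unit differs from $\eta_A$ only by the application of $\phi^*$ to a fibrant replacement, hence is a weak equivalence exactly when $\eta_A$ is. So the whole problem reduces to checking that $\eta_A$ is a weak equivalence for cofibrant $A$.

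Next I would reduce to cell algebras. Weak equivalences are closed under retracts and every cofibrant $\mathcal{O}$-algebra is a retract of a cell $\mathcal{O}$-algebra, that is, one obtained from the initial algebra by a transfinite composition of pushouts of free maps $F_{\mathcal{O}}(X)\r F_{\mathcal{O}}(Y)$ along generating cofibrations $X\r Y$ of $\C C$. As $\phi_*$ is a left adjoint it preserves colimits and commutes with the free functors, $\phi_*F_{\mathcal{O}}=F_{\mathcal{P}}$, so $\phi_*A$ is the corresponding cell $\mathcal{P}$-algebra and $\eta$ is compatible with the two cellular filtrations. It therefore suffices to prove by transfinite induction that $\eta$ is a weak equivalence at every stage. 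The base case is the initial algebra, whose underlying object is $z(\mathcal{O}(0))$, and there $\eta$ is $z(\phi(0))$, a weak equivalence because the composite of $z$ into $\C C$ is left Quillen and $\phi(0)$ is a weak equivalence between cofibrant objects (Ken Brown's lemma).

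The heart of the argument is the successor step, the analysis of a single free-cell attachment to a stage $B$ (a cofibrant algebra, by induction). Here I would use the explicit filtration of the underlying object of the pushout of $F_{\mathcal{O}}(X)\r F_{\mathcal{O}}(Y)$ along a map to $B$. In the non-symmetric setting this filtration has, at level $n$, subquotients built from $z(\mathcal{O}(n))$ tensored with the $n$-fold pushout-product of $X\r Y$ and with copies of the underlying object of $B$; crucially there are no symmetric group actions to quotient by. Applying $\phi_*$ produces the analogous filtration with $\mathcal{O}(n)$ replaced by $\mathcal{P}(n)$, and $\eta$ induces on each subquotient a map of the form $z(\phi(n))\otimes\mathrm{id}$. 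Now $z(\phi(n))$ is a weak equivalence between cofibrant objects, while the remaining tensor factor is cofibrant, since pushout-product powers of a cofibration are cofibrant by the pushout-product axiom and the underlying object of $B$ is cofibrant because cofibrant algebras over an operad with cofibrant components have cofibrant underlying objects. Hence each subquotient map is a weak equivalence between cofibrant objects, using that tensoring with a cofibrant object preserves such weak equivalences.

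Finally I would assemble these pieces. Each level of the filtration is a pushout of its predecessor along a cofibration (pushout-products of cofibrations are cofibrations), so comparing the $\mathcal{O}$- and $\mathcal{P}$-filtrations rung by rung and invoking the gluing lemma for the left proper category $\C C$ --- this is precisely where left properness of $\C C$ is used --- an induction on the level $n$ shows that $\eta$ is a weak equivalence on every level, the induction starting from the hypothesis that it is one on $B$. Passing to the colimit over levels, a transfinite composition of cofibrations, preserves weak equivalences, giving $\eta$ on the successor stage; limit ordinals are handled the same way, which closes the transfinite induction. I expect the main obstacle to be setting up and verifying the level filtration of the free-cell attachment together with the identification of its subquotients, and in particular securing the cofibrancy of the underlying objects that makes the tensored comparison maps into weak equivalences.
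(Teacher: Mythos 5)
Your proposal follows essentially the same route as the paper's own proof: reduce via Hovey's criterion to showing the unit $\eta_A$ is a weak equivalence for cofibrant $A$ (using that $\phi^*$ is the identity on underlying objects, hence preserves and reflects weak equivalences), then run a transfinite induction over a cell structure for $A$, with the base case $z(\phi(0))$, the successor step handled by the filtration of a free-cell pushout compared rung by rung through the gluing lemma in the left proper category $\C{C}$, and the colimit/limit-ordinal steps handled by Reedy-cofibrancy and Ken Brown's lemma. This is precisely the paper's argument, packaged there as Lemma \ref{cofq} (resting on Lemmas \ref{pind2}, \ref{wedot} and \ref{cofc}), so no further comparison is needed.
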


This result will be useful to show that in many examples the homotopy theory of enriched categories coincides with the homotopy theory of $A_{\infty}$-categories, e.g. when the underlying symmetric monoidal category $\C V$ is any of the categories in the examples (1)--(6) listed above, see Section \ref{appl}.

\bigskip

\noindent\textbf{Acknowledgements.} The author wishes to thank Michael Batanin, Clemens Ber\-ger, Benoit Fresse, Javier J. Guti\'errez, Ieke Moerdijk, Andy Tonks and Bruno Vallette for conversations related to the contents of this paper, in particular for providing very interesting references.

\bigskip

\noindent\textbf{Notation.} Throughout this paper $\C{V}$ and $\C{C}$ will denote  complete and cocomplete biclosed monoidal categories \cite[1.5]{bcect} with tensor product  $X\otimes Y$ and unit object $\unit_{\C V}$ and $\unit_{\C C}$, respectively. We drop the subscript when it is clear from the context. The category $\C{V}$ will be symmetric and internal morphism objects in $\C{V}$ will be denoted by $\hom(X,Y)$. We will add homotopical hypotheses when needed.

\section{Operads}

In this section we recall the well-known notion of non-symmetric operad.

\begin{defn}
 The category~$\C{V}^{\mathbb{N}}$ of \emph{sequences} of objects $V=\{V(n)\}_{n\geq0}$ in~$\C{V}$ 
is the product of countably many copies of $\C{V}$. It
 has a right-closed non-symmetric monoidal structure given by the \emph{composition product}~$U\circ V$,
\begin{align*}
(U\circ V)(m)&=\coprod_{n\geq 0}\coprod_{\sum\limits_{i=1}^{n}p_{i}=m}U(n)\otimes V(p_{1})\otimes\cdots\otimes V(p_{n}).
\end{align*}
The unit object is $\unit_{\circ}$,
$$\unit_{\circ}(n)=\left\{
\begin{array}{ll}
\unit,&\text{the unit of }\otimes\text{ in }\C{V},\text{ if }n=1;\\
0,&\text{the initial object of }\C{V},\text{ if }n\neq 1.
\end{array}
\right.$$
\end{defn}

\begin{rem}
The fact that $\circ$ is non-symmetric is obvious from the very definition. One can easily check by writing down explicitly the  formulas of $(U\circ V)\circ W$ and $U\circ (V\circ W)$ how the symmetry constraint of $\otimes$ is used to define the associativity constraint of $\circ$. The right adjoint of $-\circ V$ is the functor $\hom_{\circ}(V,-)$ defined by
\begin{align*}
\hom_{\circ}(V, W)(n)&=\prod_{p_1,\dots,p_n\geq 0}\hom(V(p_{1})\otimes\cdots\otimes V(p_{n}), W(p_1+\cdots+p_n)),
\end{align*}
in particular $-\circ V$ preserves all colimits. On the contrary, the functor $U\circ-$ does not preserve all colimits, but it does preserve filtered colimits.
\end{rem}

\begin{rem}\label{jn}
If $\C{V}$ is a model category 
then the product category $\C{V}^{\mathbb{N}}$  is also a model category with fibrations, cofibrations and weak equivalences defined  coordinatewise \cite[Example 1.1.6]{hmc}. Moreover, if~$\C{V}$ is cofibrantly generated (resp. combinatorial) then $\C{V}^{\mathbb{N}}$  is also cofibrantly generated (resp. combinatorial). 

Indeed, let $I$ be a set of generating cofibrations  and $J$ a set of generating trivial cofibrations in $\C V$. For any $n\geq 0$, let $s_{n}\colon \C{V}\r \C{V}^{\mathbb{N}}$ be  the left adjoint of the projection onto the $n^{\text{th}}$ factor, which is defined by 
$$(s_{n}(V))(m)=\left\{
\begin{array}{ll}
V,&\text{if }m=n;\\
0,&\text{the initial object, if }m\neq n.
\end{array}
\right.$$
Given a set $S$ of morphisms in $\C{V}$ we consider the following set of morphisms 
in~$\C{V}^{\mathbb{N}}$,
$$S_{\mathbb{N}}=\bigcup_{n\geq 0}s_{n}(S).$$
The sets $I_{\mathbb{N}}$ and $J_{\mathbb{N}}$ are sets of   generating cofibrations and   generating trivial cofibrations in~$\C{V}^{\mathbb{N}}$, respectively. 

\end{rem}

\begin{defn}
 A \emph{non-symmetric operad} $\mathcal O$ in  $\C{V}$ is a monoid in the monoidal category of sequences $\C{V}^{\mathbb{N}}$ with the composition product $\circ$.
\end{defn}

\begin{rem}
The previous condensed definition of an operad $\mathcal{O}$ can be unraveled by noticing that the multiplication $\mu\colon \mathcal{O}\circ \mathcal{O}\r \mathcal{O}$ consists of a series of multiplication morphisms, $1\leq i\leq n$, $p_{i}\geq 0$, 
$$\mu_{n;p_1,\dots,p_n}\colon \mathcal{O}(n)\otimes \mathcal{O}(p_{1})\otimes\cdots\otimes \mathcal{O}(p_{n})\To \mathcal{O}(p_1+\cdots +p_n).$$
The associativity condition amounts to say that the following diagram is always commutative,
$$\xy
(0,0)*+{\mathcal{O}(n)\otimes\bigotimes\limits_{i=1}^{n}\left(\mathcal{O}(p_{i})\otimes
\bigotimes\limits_{j=1}^{p_{i}}\mathcal{O}(q_{ij})\right)},
(0,-20)*+{\left(\mathcal{O}(n)\otimes\bigotimes\limits_{i=1}^{n}\mathcal{O}(p_{i})\right)\otimes
\bigotimes\limits_{i=1}^{n}\bigotimes\limits_{j=1}^{p_{i}}\mathcal{O}(q_{ij})},
(50,10)*+{\mathcal{O}(n)\otimes\bigotimes\limits_{i=1}^{n}\mathcal{O}\left(\sum\limits_{j=1}^{p_{i}}q_{ij}\right)},
(50,-30)*+{\mathcal{O}\left(\sum\limits_{i=1}^{n}p_{i}\right)\otimes
\bigotimes\limits_{i=1}^{n}\bigotimes\limits_{j=1}^{p_{i}}\mathcal{O}(q_{ij})},
(80,-10)*+{\mathcal{O}\left(\sum\limits_{i=1}^{n}\sum\limits_{j=1}^{p_{i}}q_{ij}\right)},
\ar(0,-5);(0,-15)_{\cong}^{\text{assoc. and sym.}}
\ar(26,3);(37,7)_{\id{}\otimes\bigotimes\limits_{i=1}^{n}\mu}
\ar(29,-20);(39,-24)^{\quad\mu\otimes\id{}}
\ar(69,7);(80,-4)^{\mu}
\ar(67,-27);(80,-16)_{\mu}
\endxy$$
Here the order of tensor factors in $
\bigotimes_{i=1}^{n}\bigotimes_{j=1}^{p_{i}}\mathcal{O}(q_{ij})$ is determined by the lexicographic order of the pair $(i,j)$. Moreover,  the unit is just a morphism $u\colon\unit\r \mathcal{O}(1)$ such that the following morphisms are (compositions of) unit constraints  in $\C V$,
$$\xymatrix@C=30pt@R=10pt{
\unit\otimes \mathcal{O}(n)\ar[r]^-{u\otimes\id{}}& \mathcal{O}(1)\otimes \mathcal{O}(n)\ar[r]^-{\mu_{1;n}}& \mathcal{O}(n),\\
\mathcal{O}(n)\otimes \unit^{\otimes n}\ar[r]^-{\id{}\otimes u^{\otimes n}}& \mathcal{O}(n)\otimes \mathcal{O}(1)^{\otimes n}
\ar[r]^-{\mu_{n;1,\dots,1}}& \mathcal{O}(n).
}$$

\end{rem}

\begin{rem}\label{circi}
The multiplication morphisms in the previous remark are determined by the  following morphisms, $1\leq i\leq m$, $n\geq 0$,
$$\circ_i\colon \mathcal{O}(m)\otimes \mathcal{O}(n)\To \mathcal{O}(m+n-1),$$
defined as
$$
\xymatrix{\mathcal{O}(m)\otimes \mathcal{O}(n)\ar[d]^{\cong}_{(\text{left and right unit})^{-1}}\\ 
\mathcal{O}(m)\otimes \unit^{\otimes (i-1)}\otimes \mathcal{O}(n)\otimes \unit^{\otimes (m-i)}
\ar[d]^{\id{}\otimes u^{\otimes (i-1)}\otimes\id{}\otimes u^{\otimes (m-i)}}\\
\mathcal{O}(m)\otimes \mathcal{O}(1)^{\otimes (i-1)}\otimes \mathcal{O}(n)\otimes \mathcal{O}(1)^{\otimes (m-i)}
\ar[d]^{\mu_{m;1,\st{i-1}{\dots\dots},1,n,1,\st{m-i}{\dots\dots},1}}\\ \mathcal{O}(m+n-1)}
$$

An operad can actually be defined as a collection of morphisms $\circ_{i}$ as above together with a unit morphism $u\colon\unit\r \mathcal{O}(1)$ such that, for $1\leq i\leq m$, the following diagrams commute:
\begin{enumerate}
\item If  $1\leq j<i$,
$$\xy
(0,0)*{(\mathcal{O}(l)\otimes \mathcal{O}(m))\otimes \mathcal{O}(n)},
(0,-20)*{(\mathcal{O}(l)\otimes \mathcal{O}(n))\otimes \mathcal{O}(m)},
(40,10)*{\mathcal{O}(l+m-1)\otimes \mathcal{O}(n)},
(40,-30)*{\mathcal{O}(l+n-1)\otimes \mathcal{O}(m)},
(80,-10)*{\mathcal{O}(l+m+n-2)},
(12,8)*{\scriptstyle \circ_{i}\otimes\id{}},
(13,-28)*{\scriptstyle \circ_{j}\otimes\id{}},
(75,0)*{\scriptstyle \circ_{j}},
(78,-20)*{\scriptstyle \circ_{
i+n-1
}},
\ar(0,-3);(0,-17)_{\cong}^{\text{assoc. and sym.}}
\ar(9,2);(23,9)
\ar(9,-22);(23,-29)
\ar(57,9);(80,-7)
\ar(57,-29);(80,-13)
\endxy$$

\item If  $i\leq j<m+i$,
$$\xy
(0,0)*{(\mathcal{O}(l)\otimes \mathcal{O}(m))\otimes \mathcal{O}(n)},
(0,-20)*{\mathcal{O}(l)\otimes (\mathcal{O}(m)\otimes \mathcal{O}(n))},
(40,10)*{\mathcal{O}(l+m-1)\otimes \mathcal{O}(n)},
(40,-30)*{\mathcal{O}(l)\otimes \mathcal{O}(m+n-1)},
(80,-10)*{\mathcal{O}(l+m+n-2)},
(12,8)*{\scriptstyle \circ_{i}\otimes\id{}},
(12,-28)*{\scriptstyle \id{}\otimes\circ_{j-i+1}},
(75,0)*{\scriptstyle \circ_{j}},
(75,-20)*{\scriptstyle \circ_{i}},
\ar(0,-3);(0,-17)_{\cong}^{\text{assoc.}}
\ar(9,2);(23,9)
\ar(9,-22);(23,-29)
\ar(57,9);(80,-7)
\ar(57,-29);(80,-13)
\endxy$$

\end{enumerate}
These relations are illustrated by the trees in Figures \ref{asoc1} and  \ref{asoc2} below. 
Moreover, for all $1\leq i\leq n$ the following composite morphisms must be unit constraints  in $\C V$,
\begin{flushleft}
$\xymatrix@C=30pt@R=10pt{\quad\;\,\text{(3)}\qquad
\unit\otimes \mathcal{O}(n)\ar[r]^-{u\otimes\id{}}& \mathcal{O}(1)\otimes \mathcal{O}(n)\ar[r]^-{\circ_{1}}& \mathcal{O}(n),\\
\quad\;\,\text{(4)}\qquad\mathcal{O}(n)\otimes \unit\ar[r]^-{\id{}\otimes u}& \mathcal{O}(n)\otimes \mathcal{O}(1)
\ar[r]^-{\circ_{i}}& \mathcal{O}(n).
}$
\end{flushleft}

\end{rem}

\section{Trees}

The combinatorics of operads is that of trees with additional structure. In this section we recall some facts about trees that we need in order to prove our main theorems. We also give a different characterization of operads in terms of trees.

\begin{defn}
A \emph{planted tree} is a contractible finite $1$-dimensional simplicial complex $T$ with set of vertices $V(T)$, a non-empty set of edges $E(T)$, and a distinguished vertex $r(T)\in V(T)$ of degree $1$, called \emph{root}. Recall that the \emph{degree} of $v\in V(T)$ is the number of edges containing $v$. Nevertheless, we will mostly use the following number,
$$\val{T}{v}=(\text{degree of }v)-1.$$

The \emph{level} of a vertex $v\in V(T)$ is the distance to the root, $\level{v}=d(v,r(T))$, with respect to the usual metric $d$ such that the distance between two adjacent vertices $\{v,w\}\in E(T)$ is $d(v,w)=1$. The \emph{height} $\operatorname{ht}(T)$ of a planted  tree $T$ is $$\operatorname{ht}(T)=\max_{v\in V(T)}\level{v}.$$
\end{defn}

\begin{defn}
A \emph{planted planar tree} is a planted tree $T$ together with a total order $\leq$ in $V(T)$, called \emph{planar order}, such that:
\begin{itemize}
 \item If $\level{v}<\level{w}$ then $v< w$.

\item If $\{v_1,v_2\}, \{w_1,w_2\}\in E(T)$ are edges with $$\level{v_1}=\level{w_1}
=\level{v_2}-1=\level{w_2}-1,$$ and $v_1< w_1$, then $v_2< w_2$.
\end{itemize}

\begin{figure}[h]
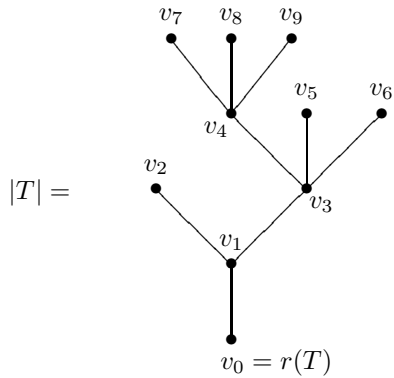

$$|T|=\qquad\begin{array}{c}
\xy
(0,0)*-{\bullet}="a",
(6,-3)*{v_{0}=r(T)},
(0,10)*-{\bullet}="b",
(0,13)*{v_1},
(-10,20)*-{\bullet}="c",
(-10,23)*{v_2},
(10,20)*-{\bullet}="d",
(12,18)*{v_3},
(0,30)*-{\bullet}="e",
(-2,28)*{v_4},
(10,30)*-{\bullet}="f",
(10,33)*{v_5},
(20,30)*-{\bullet}="g",
(20,33)*{v_6},
(-8,40)*-{\bullet}="h",
(-8,43)*{v_7},
(0,40)*-{\bullet}="i",
(0,43)*{v_8},
(8,40)*-{\bullet}="j",
(8,43)*{v_9},
\ar@{-}"a";"b",
\ar@{-}"b";"c",
\ar@{-}"b";"d",
\ar@{-}"d";"e",
\ar@{-}"d";"f",
\ar@{-}"d";"g",
\ar@{-}"e";"h",
\ar@{-}"e";"i",
\ar@{-}"e";"j",
\endxy
\end{array}$$
\caption{The geometric realization of a planted planar tree $T$ with vertices ordered by the subscript.}
\label{ppt}
\end{figure}

Given $e=\{v,w\}\in E(T)$ with $v<w$ we say that $e$ is \emph{an incoming edge} of $v$ and \emph{the outgoing edge} of $w$ (there is only one if $w\neq r(T)$ and none otherwise).

There is another useful order in $V(T)$ that we call the \emph{path order} $\preceq$. Given $v\in V(T)$, consider the shortest path from $r(T)$ to $v$ and let $r(T)=v_{0},\dots,v_{n}=v$ be the vertices within this path in order of appearance. We associate with $v$ the word $v_{0}\cdots v_{n}$ in $V(T)$. The path order in $V(T)$ is the order induced by the lexicographic order of words in $V(T)$ with respect to $\leq$.
\end{defn}

\begin{rem}
Notice that the path order $\preceq$ restricted to level sets $$\{v\in V(T)\, ;\, \level{v}=n\}, \quad n\geq 0,$$ coincides always with the planar order $\leq$. 

The words associated to the vertices  of the planted planar tree in Figure  \ref{ppt} are
$$\begin{array}{c|c}
\text{vertex}&\text{word}\\
\hline 
v_0&v_0\\
v_1&v_0v_1\\
v_2&v_0v_1v_2\\
v_3&v_0v_1v_3\\
v_4&v_0v_1v_3v_4\\
\end{array}\qquad \qquad \qquad
\begin{array}{c|c}
\text{vertex}&\text{word}\\
\hline 
v_5&v_0v_1v_3v_5\\
v_6&v_0v_1v_3v_6\\
v_7&v_0v_1v_3v_4v_7\\
v_8&v_0v_1v_3v_4v_8\\
v_9&v_0v_1v_3v_4v_9\\
\end{array}$$
hence the path order in $V(T)$ is $v_0\prec v_1\prec v_2\prec v_3\prec v_4\prec v_7\prec v_8\prec v_9\prec v_5\prec v_6$. 
\end{rem}

\begin{defn}
A \emph{planted planar tree with leaves} is a planted planar tree $T$ together with a fixed set of degree $1$ vertices $L(T)$, called \emph{leaves}, different from the root, $r(T)\notin L(T)$. An \emph{inner vertex} is a vertex which is neither a leaf nor the root. The set of inner vertices will be denoted by $I(T)$,
$$V(T)=\{r(T)\}\sqcup I(T)\sqcup L(T).$$
We denote $\norm{T}$ the open subspace of the geometric realization of $T$ obtained by removing the root and the leaves, see Figure \ref{pptl}, 
$$\norm{T}=|T|\setminus(\{r(T)\}\sqcup L(T)).$$

\begin{figure}[h]
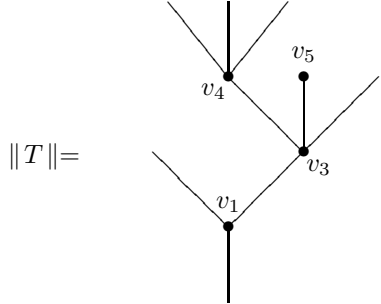

$$\norm{T}=\qquad\begin{array}{c}
\xy
(0,0)*{}="a",
(0,10)*-{\bullet}="b",
(0,13)*{v_1},
(-10,20)*{}="c",
(10,20)*-{\bullet}="d",
(12,18)*{v_3},
(0,30)*-{\bullet}="e",
(-2,28)*{v_4},
(10,30)*-{\bullet}="f",
(10,33)*{v_5},
(20,30)*{}="g",
(-8,40)*{}="h",
(0,40)*{}="i",
(8,40)*{}="j",
\ar@{-}"a";"b",
\ar@{-}"b";"c",
\ar@{-}"b";"d",
\ar@{-}"d";"e",
\ar@{-}"d";"f",
\ar@{-}"d";"g",
\ar@{-}"e";"h",
\ar@{-}"e";"i",
\ar@{-}"e";"j",
\endxy
\end{array}$$
\caption{The space $\norm{T}$ for the planted planar tree in Figure \ref{ppt} with set of leaves $L(T)=\{v_2,v_6,v_7,v_8,v_9\}$.}
\label{pptl}
\end{figure}

Abusing of terminology, we say that an edge is the \emph{root} or a \emph{leaf} if it contains the root or a leaf vertex, respectively. The rest of edges are called \emph{inner edges}. 

Given $n\geq 0$, the \emph{corolla} with $n$ leaves is a planted planar tree  $C_{n}$ with $n+2$ vertices and $n$ leaves, see Figure \ref{corollas}.
\begin{figure}[h]
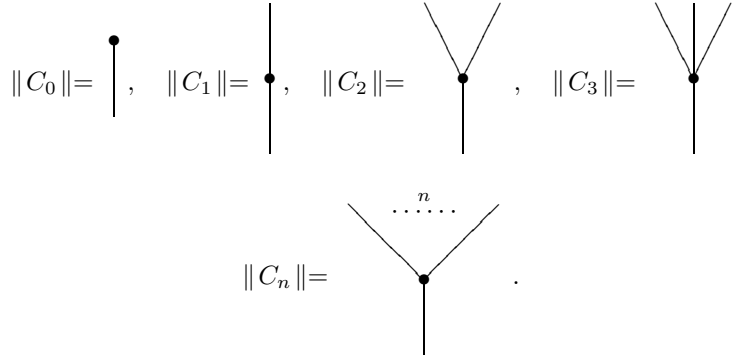

$$\norm{C_{0}}={}\begin{array}{c}
\xy
(0,0)*{}="a",
(0,10)*-{\bullet}="b",
\ar@{-}"a";"b",
\endxy
\end{array},\quad
\norm{C_{1}}={}\begin{array}{c}
\xy
(0,0)*{}="a",
(0,10)*-{\bullet}="b",
(0,20)*{}="c",
\ar@{-}"a";"b",
\ar@{-}"b";"c",
\endxy
\end{array},\quad
\norm{C_{2}}={}\begin{array}{c}
\xy
(0,0)*{}="a",
(0,10)*-{\bullet}="b",
(-5,20)*{}="c",
(5,20)*{}="d",
\ar@{-}"a";"b",
\ar@{-}"b";"c",
\ar@{-}"b";"d",
\endxy
\end{array},\quad
\norm{C_{3}}={}\begin{array}{c}
\xy
(0,0)*{}="a",
(0,10)*-{\bullet}="b",
(0,20)*{}="c",
(-5,20)*{}="d",
(5,20)*{}="e",
\ar@{-}"a";"b",
\ar@{-}"b";"c",
\ar@{-}"b";"d",
\ar@{-}"b";"e",
\endxy
\end{array},$$

$$\norm{C_{n}}={}\begin{array}{c}
\xy
(0,0)*{}="a",
(0,10)*-{\bullet}="b",
(-10,20)*{}="c",
(10,20)*{}="d",
(0,20)*{\st{n}{\cdots\cdots}},
\ar@{-}"a";"b",
\ar@{-}"b";"c",
\ar@{-}"b";"d",
\endxy
\end{array}.$$
\caption{A class of planted planar trees with leaves: the corollas~$C_{n}$, $n\geq 0$.}
\label{corollas}
\end{figure}

A \emph{morphism} of planted planar trees with leaves is a simplicial map $f\colon T\r T'$ such that:
\begin{itemize}
\item If $v\preceq w\in V(T)$ then $f(v)\preceq f(w)\in V(T')$.
\item 
$f^{-1}(\{r(T')\})=\{r(T)\}$.
\item $\card L(T)=\card L(T')$ and $f^{-1}(L(T'))=L(T)$.
\end{itemize}
We denote  ${\bf PPTL}$ the category of planted planar trees with leaves. Notice that this category has no non-trivial automorphism.
\end{defn}

\begin{rem}\label{contraction}
Any morphism $f\colon T\r T'$ is uniquely determined by the inner edges $e=\{v,w\}\in E(T)$ that $f$ contracts $f(v)=f(w)$. Moreover, given a planted planar tree with leaves $T$ and an inner edge $e=\{v,w\}\in E(T)$ the quotient tree $T/e$, obtained by contracting $e$ to a vertex $[e]\in V(T/e)$, carries a unique structure of planted planar tree with leaves such that the natural projection $p_{e}^T\colon T\r T/e$ is a morphism in ${\bf PPTL}$, see Figure \ref{pptl2}. This morphism induces identifications
\begin{align*}
V(T)\setminus\{v,w\}&= V(T/e)\setminus\{[e]\},&E(T)\setminus\{e\}&= E(T/e). 
\end{align*}

\begin{figure}[h]
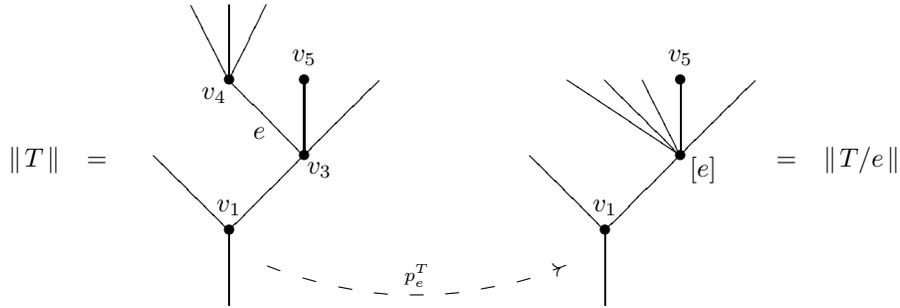

$$\norm{T}\quad=\quad\begin{array}{c}
\xy
(0,0)*{}="a",
(0,10)*-{\bullet}="b",
(0,13)*{v_1},
(-10,20)*{}="c",
(10,20)*-{\bullet}="d",
(12,18)*{v_3},
(0,30)*-{\bullet}="e",
(4,23)*{e},
(-2,28)*{v_4},
(10,30)*-{\bullet}="f",
(10,33)*{v_5},
(20,30)*{}="g",
(-5,40)*{}="h",
(0,40)*{}="i",
(5,40)*{}="j",
(50,0)*{}="aa",
(50,10)*-{\bullet}="bb",
(50,13)*{v_1},
(40,20)*{}="cc",
(60,20)*-{\bullet}="dd",
(63,18)*{[e]},
(60,30)*-{\bullet}="ff",
(60,33)*{v_5},
(70,30)*{}="gg",
(45,30)*{}="hh",
(50,30)*{}="ii",
(55,30)*{}="jj",
\ar@{-}"a";"b",
\ar@{-}"b";"c",
\ar@{-}"b";"d",
\ar@{-}"d";"e",
\ar@{-}"d";"f",
\ar@{-}"d";"g",
\ar@{-}"e";"h",
\ar@{-}"e";"i",
\ar@{-}"e";"j",
\ar@{-}"aa";"bb",
\ar@{-}"bb";"cc",
\ar@{-}"bb";"dd",
\ar@{-}"dd";"ff",
\ar@{-}"dd";"gg",
\ar@{-}"dd";"hh",
\ar@{-}"dd";"ii",
\ar@{-}"dd";"jj",
\ar@/_10pt/@{-->}(6,5);(44,5)^{p^{T}_{e}}
\endxy
\end{array}=\quad\norm{T/e}$$
\caption{The morphism $p_{e}^{T}\colon T\r T/e$ in ${\bf PPTL}$ contracting the inner edge $e=\{v_{3},v_{4}\}$.}
\label{pptl2}
\end{figure}

One can similarly define a morphism $p^T_K\colon T\r T/K$ in $\mathbf{PPTL}$ contracting the connected components of any subcomplex $K\subset T$ formed by inner edges, see Figure~\ref{dm2} below for a more complicated example.
\end{rem}

\begin{defn}
Given a planted planar tree  $T$ with $n$ leaves  and $n$ planted planar trees with leaves $T_{1},\dots, T_{n}$, we denote $T(T_{1},\dots,T_{n})$ the planted planar tree with the same root as $T$, the leaves are the disjoint union of the leaves of all $T_{i}$, and the space $\norm{T(T_{1},\dots,T_{n})}$ is obtained by \emph{grafting} the root edge of $\norm{T_{i}}$ in the $i^{\text{th}}$ leaf edge of $\norm{T}$ with respect to the path order $\preceq$ in $L(T)\subset V(T)$, $1\leq i\leq n$, see Figure \ref{graft}.

Grafting is associative, i.e.
\begin{align*}
&T(T_{1}(T_{1,1},\dots,T_{1,p_{1}}),\dots\dots,T_{n}(T_{n,1},\dots,T_{n,p_{n}}))\\
&\qquad=(T(T_{1},\dots,T_{n}))(T_{1,1},\dots,T_{1,p_{1}},\dots\dots,T_{n,1},\dots,T_{n,p_{n}}).
\end{align*}

The planted planar tree $U$ with only one edge and one leaf, $\norm{U}=|$, is a unit for the grafting operation, 
\begin{align*}
U(T)&=T,&T(U,\dots,U)=T.
\end{align*}

\begin{figure}[h]
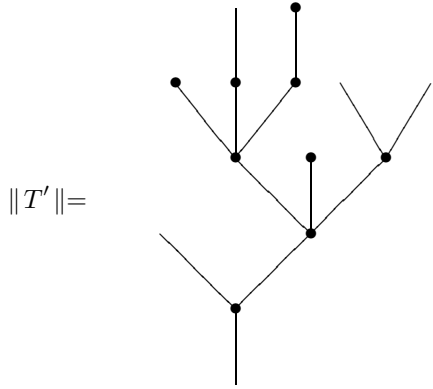

$$\norm{T'}=\qquad\begin{array}{c}
\xy
(0,0)*{}="a",
(0,10)*-{\bullet}="b",
(-10,20)*{}="c",
(10,20)*-{\bullet}="d",
(0,30)*-{\bullet}="e",
(10,30)*-{\bullet}="f",
(20,30)*-{\bullet}="g",
(-8,40)*-{\bullet}="h",
(0,40)*-{\bullet}="i",
(8,40)*-{\bullet}="j",
(0,50)*{}="k",
(8,50)*-{\bullet}="l",
(14,40)*{}="m",
(26,40)*{}="n",
\ar@{-}"a";"b",
\ar@{-}"b";"c",
\ar@{-}"b";"d",
\ar@{-}"d";"e",
\ar@{-}"d";"f",
\ar@{-}"d";"g",
\ar@{-}"e";"h",
\ar@{-}"e";"i",
\ar@{-}"e";"j",
\ar@{-}"i";"k",
\ar@{-}"j";"l",
\ar@{-}"g";"m",
\ar@{-}"g";"n",
\endxy
\end{array}$$
\caption{The grafting $T'=T(U,C_{0},C_{1},C_{1}(C_{0}),C_{2})$ for $T$ in Figure \ref{pptl}.}
\label{graft}
\end{figure}
\end{defn}

The category ${\bf PPTL}$ splits as the coproduct of the full subcategories ${\bf PPTL}(n)$ of trees with $n$ leaves, 
$${\bf PPTL}=\coprod_{n\geq 0}{\bf PPTL}(n).$$  
Notice that the grafting operation is functorial in ${\bf PPTL}$ in the sense of the following obvious lemma.

\begin{lem}
The sequence $\{{\bf PPTL}(n)\}_{n\geq0}$ with the grafting operation and the unit $U$ is an operad   in the cartesian closed category of small categories.
\end{lem}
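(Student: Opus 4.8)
The plan is to recognize grafting as the multiplication of a monoid in $({\bf Cat}^{\mathbb N},\circ)$, that is, as the operad multiplication in the sense of the earlier remarks. Concretely, the components of $\mu$ are the grafting functors
$$\mu_{n;p_1,\dots,p_n}\colon {\bf PPTL}(n)\times{\bf PPTL}(p_1)\times\cdots\times{\bf PPTL}(p_n)\To{\bf PPTL}(p_1+\cdots+p_n),\qquad (T,T_1,\dots,T_n)\mapsto T(T_1,\dots,T_n),$$
and the unit $u\colon\unit\r{\bf PPTL}(1)$ is the functor from the terminal category selecting $U$. Because the ambient monoidal category ${\bf Cat}$ is cartesian, the tensor is the product and there is no equivariance to verify (the operad is non-symmetric); the entire content is therefore (a) that grafting is a \emph{functor} on the product categories, and (b) that the associativity and unit diagrams for $\mu$ displayed above commute. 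On objects, associativity and the two unit conditions are exactly the identities $T(T_1(\dots),\dots)=(T(T_1,\dots,T_n))(\dots)$, $U(T)=T$ and $T(U,\dots,U)=T$ recorded in the definition of grafting, so the real work is to upgrade these to morphisms and to establish functoriality.

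First I would define grafting on morphisms by means of the characterization in Remark~\ref{contraction}: a morphism in ${\bf PPTL}$ is uniquely determined by the subcomplex of inner edges it contracts. Given morphisms $f\colon T\r S$ in ${\bf PPTL}(n)$ and $f_i\colon T_i\r S_i$ in ${\bf PPTL}(p_i)$, corresponding to subcomplexes $K\subset T$ and $K_i\subset T_i$ of inner edges, I would use the grafting construction to identify $\norm{T}$ and each $\norm{T_i}$ with subspaces of $\norm{T(T_1,\dots,T_n)}$, so that the inner edges of $T(T_1,\dots,T_n)$ are the disjoint union of those of $T$, those of the $T_i$, and the new edges created by grafting a root edge into a leaf edge. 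Then $K$ and the $K_i$ assemble to a single subcomplex of inner edges of the grafted tree, and the associated contraction $p^T_{K\sqcup K_1\sqcup\cdots\sqcup K_n}$ of Remark~\ref{contraction} defines $\mu(f;f_1,\dots,f_n)$. I would check that this is a genuine morphism in ${\bf PPTL}$: it sends the root to the root and the leaves bijectively to the leaves, and it preserves the path order because grafting is performed along the path order of $L(T)$, so the path order of the grafted tree restricts to the path orders of its constituents.

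Functoriality then reduces to the observation that composition in ${\bf PPTL}$ corresponds to union of contracted subcomplexes: identities are the empty subcomplex (hence preserved), and both $\mu(g;g_1,\dots,g_n)\circ\mu(f;f_1,\dots,f_n)$ and $\mu(gf;g_1f_1,\dots,g_nf_n)$ contract the same assembled subcomplex of $T(T_1,\dots,T_n)$. With functoriality established, the associativity and unit axioms become functorial identities: on objects they are the grafting identities already recorded, and on morphisms both sides contract the same subcomplex of the iterated grafting, since disjoint union of edge-subcomplexes is associative and has the empty complex as unit. In particular the "assoc. and sym." isomorphism appearing in the associativity diagram for $\mu$ is, in the cartesian category ${\bf Cat}$, just the coherence isomorphism of $\times$, so commutativity of that diagram is precisely the associativity of grafting.

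The main obstacle I expect is purely bookkeeping: making the identification of the inner edges and the path order of a grafted tree $T(T_1,\dots,T_n)$ with those of its constituents completely precise, so that "grafting of morphisms $=$ contraction of the union of subcomplexes" is unambiguous and visibly natural. This is cleanest if one organizes it through the general contraction $p^T_K$ of Remark~\ref{contraction} applied inside the grafted tree, rather than through the simplicial maps directly. Once this identification is fixed, every operad axiom is a formal consequence of the associativity and unitality of disjoint union of edge-subcomplexes, with no symmetry to track, which is exactly why the non-symmetric case goes through so cleanly.
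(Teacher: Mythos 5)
Your proposal is correct, and it matches the paper exactly in the only sense possible: the paper states this lemma without proof (calling it ``obvious''), and the verification it leaves implicit is precisely yours, namely defining grafting on morphisms by contracting the union $K\sqcup K_1\sqcup\cdots\sqcup K_n$ of inner-edge subcomplexes inside $T(T_1,\dots,T_n)$, using Remark~\ref{contraction} (morphisms in $\mathbf{PPTL}$ are exactly contractions $p^T_K$ of inner-edge subcomplexes, and these remain inner after grafting) to get functoriality, and reducing the operad axioms to the strict associativity and unitality of grafting on objects together with associativity of disjoint union of contracted subcomplexes. No gap remains; your emphasis that the cartesian structure of $\mathbf{Cat}$ and the absence of symmetry make all coherence isomorphisms trivial is exactly why the paper could dismiss the proof as obvious.
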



\begin{lem}
All planted planar trees with leaves can be  obtained by grafting corollas  and $U$.
\end{lem}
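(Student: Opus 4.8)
The plan is to argue by induction on the number of inner vertices $\card I(T)$. For the base case $\card I(T)=0$, I would use that the root $r(T)$ has degree $1$, so it is joined by the root edge to a single vertex $v$; if $T$ has no inner vertex then $v$ can be neither the root nor an inner vertex, hence $v\in L(T)$, and therefore $T$ consists of a single edge carrying one leaf, i.e. $T=U$. This starts the induction with one of the two allowed building blocks.

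For the inductive step I would take $\card I(T)\geq 1$ and look at the vertex $v_1$ adjacent to the root across the root edge. By the same reasoning as in the base case, $v_1$ cannot be a leaf (otherwise $T=U$ and $\card I(T)=0$), so $v_1\in I(T)$. Writing $k=\val{T}{v_1}$ for the number of incoming edges of $v_1$, I would list these edges $e_1,\dots,e_k$ in path order $\preceq$, with $e_i=\{v_1,w_i\}$, and peel off the corolla determined by $r(T)$ and $v_1$. The claim to be verified is that $T=C_k(T_1,\dots,T_k)$, where $T_i$ is the branch of $T$ lying above $e_i$, regarded as an object of $\mathbf{PPTL}$ whose root edge is $e_i$ (so $v_1$ becomes the root of $T_i$ and $w_i$ its first vertex), with planar order, path order and leaf set inherited from $T$.

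I would then check two things. First, that each $T_i$ is a genuine planted planar tree with leaves: the inherited total order must still satisfy the two axioms of a planar order after re-rooting at $v_1$, and the inherited leaf set must avoid the new root. Second, that grafting reconstructs $T$, namely that identifying the root edge of each $T_i$ with the $i$-th leaf edge of $C_k$ reproduces $\norm{T}$ with all its structure; this follows from the definition of grafting once the path order on the leaves of $C_k$ is matched with the listing $e_1,\dots,e_k$, which is legitimate because on a single level $\preceq$ coincides with the planar order $\leq$. Since $v_1$ lies in no branch, the inner vertices split as $I(T)=\{v_1\}\sqcup\bigsqcup_{i=1}^{k} I(T_i)$, so $\card I(T_i)<\card I(T)$ for each $i$; the induction hypothesis then expresses every $T_i$ as an iterated grafting of corollas and $U$, and as $C_k$ is itself a corolla the equation $T=C_k(T_1,\dots,T_k)$ exhibits $T$ as such a grafting, closing the induction.

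The main obstacle I anticipate is not the combinatorial decomposition, which is essentially forced, but the bookkeeping of the second step: confirming that re-rooting a branch at $v_1$ respects both the planar order and the path order, and that the indexing $i\mapsto e_i$ is the one dictated by $\preceq$ on $L(C_k)$, so that the associativity and unit identities for grafting recorded above may be invoked without any reindexing.
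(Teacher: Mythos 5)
Your proposal is correct and matches the paper's proof in essence: both peel off the corolla at the unique level-$1$ vertex and write $T=C_k(T_1,\dots,T_k)$, then apply induction to the branches. The only differences are cosmetic — the paper inducts on the height $\operatorname{ht}(T)$ with base cases $U$ and $C_0$, whereas you induct on $\card I(T)$ with base case $U$ and absorb $C_0$ as the $k=0$ instance of the inductive step — and both measures strictly decrease under the same decomposition.
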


\begin{proof}
By induction on the height of the  planted planar tree withe leaves $T$. 
On the one hand, if $\operatorname{ht}(T)=1$ then $T=U$ or $C_{0}$. On the other hand, any $T\neq U, C_{0}$ can be decomposed as $T=C_{n}(T_{1},\dots,T_{n})$ where $n+1$ is the degree of the unique level $1$ vertex of $T$ and $\operatorname{ht}(T_{i})<\operatorname{ht}(T)$,  $1\leq i\leq n$.
\end{proof}

For instance,  $T$ in Figure \ref{pptl} is $$T=C_{2}(U,C_{3}(C_{3},C_{0},U))=C_{2}\circ_{2}((C_{3}\circ_{2}C_{0})\circ_{1}C_{3}).$$


\begin{defn}
An \emph{operadic functor} with values in $\C V$ is  a functor $$\mathcal{G}\colon {\bf PPTL}\To\C{V}$$
equipped with a  \emph{unit morphism} $u\colon\unit\r \mathcal{G}(C_{1})$ and natural isomorphisms
$$\mathcal{G}(T({T_{1},\dots,T_{n}}))\cong \mathcal{G}(T)\otimes \mathcal{G}(T_{1})\otimes\cdots\otimes \mathcal{G}(T_{n}),$$
that we call \emph{grafting isomorphisms}, 
such that:
\begin{itemize}
\item $\mathcal{G}(U)=\unit$.

\item The following composition of grafting isomorphisms is a coherent composition of associativity and symmetry constraints in~$\C V$,
\begin{align*}
&\mathcal{G}(T)
\otimes
\mathcal{G}(T_{1})\otimes\mathcal{G}(T_{1,1})\otimes\dots\otimes\mathcal{G}(T_{1,p_{1}})\otimes\cdots\dots\otimes\mathcal{G}(T_{n})\otimes
\mathcal{G}(T_{n,1})\otimes\dots\otimes \mathcal{G}(T_{n,p_{n}})\\
&\cong
\mathcal{G}(T)
\otimes
\mathcal{G}(T_{1}(T_{1,1},\dots,T_{1,p_{1}}))\otimes\cdots\dots\otimes\mathcal{G}(T_{n}(T_{n,1},\dots,T_{n,p_{n}}))\\
&\cong\mathcal{G}(T(T_{1}(T_{1,1},\dots,T_{1,p_{1}}),\dots\dots,T_{n}(T_{n,1},\dots,T_{n,p_{n}})))\\
&=\mathcal{G}((T(T_{1},\dots,T_{n}))(T_{1,1},\dots,T_{1,p_{1}},\dots\dots,T_{n,1},\dots,T_{n,p_{n}}))\\
&\cong
\mathcal{G}(T(T_{1},\dots,T_{n}))
\otimes
\mathcal{G}(T_{1,1})\otimes\dots\otimes\mathcal{G}(T_{1,p_{1}})\otimes\cdots\cdots\otimes\mathcal{G}(T_{n,1})\otimes
\dots\otimes\mathcal{G}(T_{n,p_{n}})\\
&\cong\mathcal{G}(T)\otimes\mathcal{G}(T_{1})\otimes\cdots\otimes\mathcal{G}(T_{n})\otimes
\mathcal{G}(T_{1,1})\otimes\dots\otimes\mathcal{G}(T_{1,p_{1}})\otimes\cdots\cdots\otimes\mathcal{G}(T_{n,1})\otimes
\dots\otimes\mathcal{G}(T_{n,p_{n}}).
\end{align*}

\item The following grafting isomorphisms are (compositions of) unit constraints in~$\C V$,
$$\xymatrix@R=10pt{\unit\otimes\mathcal{G}(T)=\mathcal{G}(U)\otimes \mathcal{G}(T)\ar[r]^-{\cong}_-{\text{grafting}}& \mathcal{G}(U(T))=\mathcal{G}(T),\\
\mathcal{G}(T')\otimes \unit\otimes\cdots\otimes \unit=\mathcal{G}(T')
\otimes \mathcal{G}(U)\otimes\cdots\otimes \mathcal{G}(U)
\ar[r]_-{\text{grafting}}^-{\cong}& \mathcal{G}(T(U,\dots,U))=\mathcal{G}(T).}$$

\item Suppose $T'=C_{1}(T)$, see Figure \ref{1i}. Let $f\colon T'\r T$ be the morphism which contracts the incoming  edge of the level $1$ vertex of $T'$. 
Then the following morphism is the left unit constraint in $\C V$,
$$\xymatrix{\unit\otimes\mathcal{G}(T)\ar[r]^-{u\otimes\id{}}&\mathcal{G}(C_{1})\otimes \mathcal{G}(T)\ar[r]^-{\cong}_-{\text{grafting}}& \mathcal{G}(T')\ar[r]^-{\mathcal{G}(f)}& \mathcal{G}(T).}$$
\begin{figure}[h]
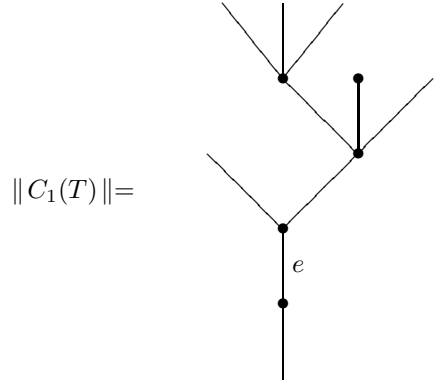

$$\norm{C_{1}(T)}=\qquad\begin{array}{c}
\xy
(0,-10)*{}="o",
(2,5)*{e},
(0,0)*-{\bullet}="a",
(0,10)*-{\bullet}="b",
(-10,20)*{}="c",
(10,20)*-{\bullet}="d",
(0,30)*-{\bullet}="e",
(10,30)*-{\bullet}="f",
(20,30)*{}="g",
(-8,40)*{}="h",
(0,40)*{}="i",
(8,40)*{}="j",
\ar@{-}"o";"a",
\ar@{-}"a";"b",
\ar@{-}"b";"c",
\ar@{-}"b";"d",
\ar@{-}"d";"e",
\ar@{-}"d";"f",
\ar@{-}"d";"g",
\ar@{-}"e";"h",
\ar@{-}"e";"i",
\ar@{-}"e";"j",
\endxy
\end{array}$$
\caption{The planted planar tree with leaves $T'=C_{1}({T})$ for $T$ as in Figure \ref{pptl}. Here we denote $e$ the incoming  edge of the level $1$ vertex of $T'$.}
\label{1i}
\end{figure}

\item Suppose $T'=T(C_{1},\dots,C_{1})$, see Figure \ref{1d}. Let $f\colon T'\r T$ be the morphism which contracts all the inner edges  adjacent to the leaf edges in $T'$. Then the following morphism is a composition of  right unit constraints in $\C V$,
$$\xymatrix@C=19pt{\mathcal{G}(T)\otimes \unit\otimes\cdots\otimes \unit\ar[rr]^-{\id{}\otimes u\otimes\cdots\otimes u}&& \mathcal{G}(T)
\otimes \mathcal{G}(C_{1})\otimes\cdots\otimes \mathcal{G}(C_{1})
\ar[r]^-{\cong}_-{\text{grafting}}& \mathcal{G}(T')\ar[r]^-{\mathcal{G}(f)}& \mathcal{G}(T).}$$
\begin{figure}[h]
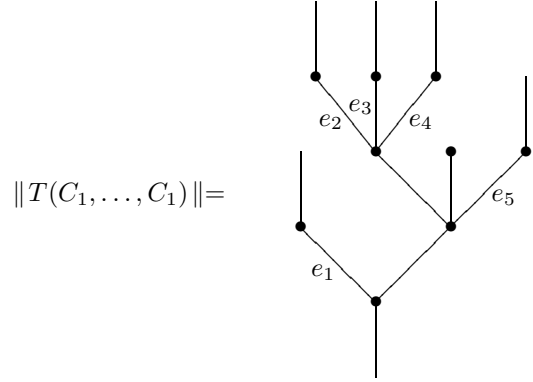

$$\norm{{T}(C_{1},\dots,C_{1})}=\qquad\begin{array}{c}
\xy
(0,0)*{}="a",
(-7,14)*{e_{1}},
(-6,34)*{e_{2}},
(-2,36)*{e_{3}},
(6,34)*{e_{4}},
(17,24)*{e_{5}},
(0,10)*-{\bullet}="b",
(-10,20)*-{\bullet}="c",
(-10,30)*{}="cc",
(10,20)*-{\bullet}="d",
(0,30)*-{\bullet}="e",
(10,30)*-{\bullet}="f",
(20,30)*-{\bullet}="g",
(20,40)*{}="gg",
(-8,40)*-{\bullet}="h",
(-8,50)*{}="hh",
(0,40)*-{\bullet}="i",
(0,50)*{}="ii",
(8,40)*-{\bullet}="j",
(8,50)*{}="jj",
\ar@{-}"a";"b",
\ar@{-}"b";"c",
\ar@{-}"b";"d",
\ar@{-}"d";"e",
\ar@{-}"d";"f",
\ar@{-}"d";"g",
\ar@{-}"e";"h",
\ar@{-}"e";"i",
\ar@{-}"e";"j",
\ar@{-}"c";"cc",
\ar@{-}"g";"gg",
\ar@{-}"h";"hh",
\ar@{-}"i";"ii",
\ar@{-}"j";"jj"
\endxy
\end{array}$$
\caption{The planted planar tree with leaves $T'={T}(C_{1},\dots,C_{1})$ for $T$ as in Figure \ref{pptl}. Here we denote $e_{i}$ the inner edges  adjacent to the leaf edges in $T'$.}
\label{1d}
\end{figure}

\end{itemize}

A \emph{morphism} of operadic functors $\varphi\colon\mathcal{G}\r\mathcal{H}$ is a natural transformation compatible with the grafting isomorphisms, with the unit morphism, and such that $\varphi(U)=\id{\unit}$.
\end{defn}

The following equivalence between operads and operadic functors was sketched by Ginzburg and Kapranov in the symmetric case \cite[1.2]{kdo}.

\begin{prop}\label{ofo}
There is an equivalence between the categories of operads in $\C V$ and operadic functors with values in $\C V$.
\end{prop}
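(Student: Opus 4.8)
The plan is to construct functors in both directions and verify they are mutually inverse equivalences. The key combinatorial input is the preceding lemma: every planted planar tree with leaves is obtained from corollas $C_n$ and $U$ by iterated grafting, and grafting is associative with unit $U$. This means an operadic functor $\mathcal{G}$ is determined, up to the specified coherent isomorphisms, by its values on the corollas $\mathcal{G}(C_n)$ together with the morphisms $\mathcal{G}(f)$ induced by edge contractions. So the strategy is to match $\mathcal{G}(C_n)$ with $\mathcal{O}(n)$ and to match the contraction maps with the operad structure.

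First I would define the functor from operadic functors to operads. Given $\mathcal{G}$, set $\mathcal{O}(n)=\mathcal{G}(C_n)$ for all $n\geq 0$, and take as unit $u\colon\unit\r\mathcal{O}(1)=\mathcal{G}(C_1)$ the unit morphism of $\mathcal{G}$. To produce the partial compositions $\circ_i\colon\mathcal{O}(m)\otimes\mathcal{O}(n)\r\mathcal{O}(m+n-1)$ from Remark \ref{circi}, I would graft $C_n$ into the $i^{\text{th}}$ leaf of $C_m$, obtaining a two-level tree $T=C_m(U,\dots,C_n,\dots,U)$ with $m+n-1$ leaves, and then apply $\mathcal{G}$ of the morphism $f\colon T\r C_{m+n-1}$ that contracts the unique inner edge. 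Composing with the grafting isomorphism and the unit constraints (using $\mathcal{G}(U)=\unit$) yields $\circ_i$. The associativity axioms (1) and (2) of Remark \ref{circi} then follow from functoriality of $\mathcal{G}$ applied to the two ways of contracting the inner edges of a three-level tree, which are illustrated by the trees in Figures \ref{asoc1} and \ref{asoc2}; the unit axioms (3) and (4) are exactly the last two bulleted conditions in the definition of operadic functor.

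Conversely, given an operad $\mathcal{O}$, I would define $\mathcal{G}(T)=\bigotimes_{v\in I(T)}\mathcal{O}(\val{T}{v})$, the tensor product over inner vertices ordered by the path order $\preceq$, with $\val{T}{v}$ the number of incoming edges. Then $\mathcal{G}(U)=\unit$ (empty tensor product, since $U$ has no inner vertex) and $\mathcal{G}(C_n)=\mathcal{O}(n)$. The grafting isomorphism is induced by the identification of inner vertices of $T(T_1,\dots,T_n)$ with the disjoint union of inner vertices of $T$ and the $T_i$, together with associativity and symmetry constraints of $\otimes$ to put the factors in the correct order. For a general morphism $f\colon T\r T'$ in ${\bf PPTL}$, which by Remark \ref{contraction} is a contraction of a subcomplex $K$ of inner edges, $\mathcal{G}(f)$ is built by iterating the partial multiplications of $\mathcal{O}$ along the contracted components; functoriality of this assignment reduces, via the tree-grafting decomposition, to the operad associativity relations.

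The main obstacle I expect is bookkeeping the coherence: one must check that $\mathcal{G}(f)$ is well defined independently of the order in which the edges of $K$ are contracted, and that it is genuinely functorial, $\mathcal{G}(gf)=\mathcal{G}(g)\mathcal{G}(f)$. This is where the operad axioms are used in full force, and where the path order $\preceq$ must be shown to be compatible with the lexicographic ordering of tensor factors appearing in the associativity diagram of the multiplication. Once both functors are in place, verifying that they are mutually inverse is comparatively routine: starting from $\mathcal{O}$, evaluating $\mathcal{G}$ on corollas recovers $\mathcal{O}(n)$ and evaluating on single-edge contractions recovers the $\circ_i$ by construction; starting from $\mathcal{G}$, the tree-decomposition lemma together with the coherence conditions forces the reconstructed operadic functor to agree with $\mathcal{G}$ on all trees and all morphisms. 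I would also note that both assignments are evidently functorial on morphisms (natural transformations of operadic functors correspond to operad morphisms), so the equivalence is in fact an equivalence of categories.
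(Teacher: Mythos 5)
Your proposal is correct and follows essentially the same route as the paper: one direction sends $\mathcal{G}$ to the operad $\mathcal{G}(C_n)$ with structure maps obtained from grafting isomorphisms followed by $\mathcal{G}$ applied to edge contractions, and the other sends $\mathcal{O}$ to the functor $T\mapsto\bigotimes_{v\in I(T)}\mathcal{O}(\val{T}{v})$, with the inverse-equivalence check reducing to the decomposition of trees into grafted corollas. The only cosmetic difference is that the paper defines the multiplication on $\mathcal{G}(C_n)$ via the full maps $\mu_{n;p_1,\dots,p_n}$ (contracting all inner edges of $C_n(C_{p_1},\dots,C_{p_n})$) rather than via the partial compositions $\circ_i$ on two-level trees, but these presentations are interchangeable by Remark \ref{circi}.
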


\begin{proof}
Denote $\operadf{\C V}$ the category of operadic functors with values in $\C V$. We are going to define adjoint equivalences 
$$\xymatrix{\operad{\C V}\ar@<.5ex>[r]^-{L} &\operadf{\C V} \ar@<.5ex>[l]^-{R}.}$$

Given an operadic functor $\mathcal{G}$ we set $$R(\mathcal{G})(n)=\mathcal{G}(C_{n}),$$ the unit of the operad $R(\mathcal{G})$ is $u\colon \unit\r \mathcal{G}(C_{1})=R(\mathcal{G})(1)$, and multiplications in $R(\mathcal{G})$ are defined by the morphisms $$f_{n;p_{1},\dots,p_{m}}\colon C_{n}(C_{p_{1}},\dots,C_{p_{n}})\To C_{p_{1}+\dots + p_{n}}$$ which contract all inner edges,
$$\xymatrix{
R(\mathcal{G})(n)\otimes R(\mathcal{G})(p_{1})\otimes\cdots\otimes R(\mathcal{G})(p_{n})
\ar[dd]_{\mu_{n;p_{1},\dots,p_{n}}}\ar@{=}[r]&
\mathcal{G}(C_{n})\otimes \mathcal{G}(C_{p_{1}})\otimes\cdots\otimes \mathcal{G}(C_{p_{n}})
\ar[d]_{\cong}^{\text{grafting}}\\
&\mathcal{G}(C_{n}(C_{p_{1}},\dots,C_{p_{n}}))\ar[d]^{\mathcal{G}(f_{n;p_{1},\dots,p_{n}})}\\
R(\mathcal{G})({p_{1}+\dots + p_{n}})\ar@{=}[r]&\mathcal{G}(C_{p_{1}+\dots + p_{n}})}$$

Conversely, if $\mathcal{O}$ is an operad then the corresponding operadic functor $L(\mathcal{O})$ is defined on objects as
$$L(\mathcal{O})(T)=\bigotimes_{u\in I(T)} \mathcal{O}(\val{T}{u}),$$
see Figure \ref{lot}.
\begin{figure}[h]
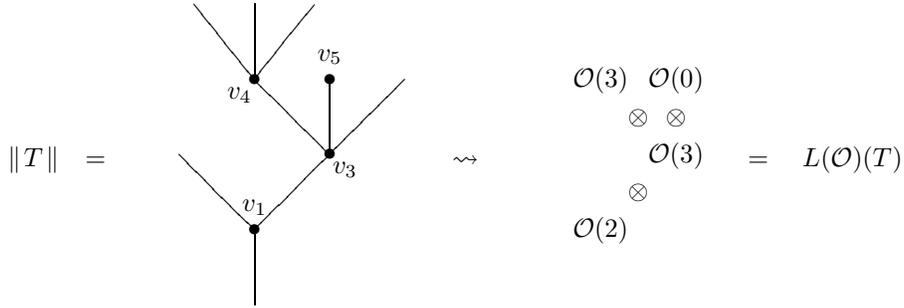

$$\norm{T}\quad=\qquad\begin{array}{c}
\xy
(0,0)*{}="a",
(0,10)*-{\bullet}="b",
(0,13)*{v_1},
(-10,20)*{}="c",
(10,20)*-{\bullet}="d",
(12,18)*{v_3},
(0,30)*-{\bullet}="e",
(-2,28)*{v_4},
(10,30)*-{\bullet}="f",
(10,33)*{v_5},
(20,30)*{}="g",
(-8,40)*{}="h",
(0,40)*{}="i",
(8,40)*{}="j",
\ar@{-}"a";"b",
\ar@{-}"b";"c",
\ar@{-}"b";"d",
\ar@{-}"d";"e",
\ar@{-}"d";"f",
\ar@{-}"d";"g",
\ar@{-}"e";"h",
\ar@{-}"e";"i",
\ar@{-}"e";"j"
\endxy
\end{array}\quad\leadsto\quad
\begin{array}{c}
\xy
(0,10)*-<2pt>{\mathcal{O}(2)}="b",
(-10,20)*{}="c",
(10,20)*-{ \mathcal{O}(3)}="d",
(0,30)*-<1pt>{\mathcal{O}(3)}="e",
(10,30)*-<2pt>{\mathcal{O}(0)}="f",
\ar@{}"b";"d"|{\displaystyle\otimes}
\ar@{}"d";"e"|{\displaystyle\otimes}
\ar@{}"d";"f"|{\displaystyle\otimes}
\endxy
\end{array}\quad=\quad L(\mathcal{O})(T)$$
\caption{The object $L(\mathcal{O})(T)$ associated to the planted planar tree with leaves $T$ in Figure \ref{pptl}.}
\label{lot}
\end{figure}
The morphism  $u\colon \unit\r \mathcal{O}(1)=L(\mathcal{O})(C_{1})$ is the unit of the operad. Grafting isomorphisms are coherent compositions of associativity and symmetry constraints in~$\C V$. Moreover, let $T$ be a planted planar tree with leaves and $e=\{v,w\}\in E(T)$ an inner edge which is the $i^{\text{th}}$ incoming edge of $v$. The morphism induced by the natural projection $p_{e}^{T}\colon T\r T/e$ in Remark \ref{contraction} is 
$$\xy
(0,0)*{L(\mathcal{O})(T)},
(0,-40)*{L(\mathcal{O})(T/e)},
(58,-1.5)*{\mathcal{O}(\val{T}{v})\otimes \mathcal{O}(\val{T}{w})\otimes \hspace{-20pt} \bigotimes\limits_{u\in I(T)\setminus\{v,w\}}\!\!\!\!\!\!\!\!\!\! \mathcal{O}(\val{T}{u}) },
(63,-41.5)*{ \mathcal{O}(\val{T/e}{[e]})\otimes \hspace{-20pt}\bigotimes\limits_{u\in I(T/e)\setminus\{[e]\}}\!\!\!\!\!\!\!\!\!\!\! \mathcal{O}(\val{T/e}{u})},
\ar@{->}(8,0);(38,0)^{\cong}_{\text{symmetry}}
\ar@{->}(10,-40);(48,-40)^{\cong}_{\text{symmetry}}
\ar(0,-3);(0,-37)_{L(\mathcal{O})(p_{e}^{T})}
\ar(58,-5);(58,-36)^{\circ_{i}\otimes\id{}}
\endxy$$
Here we use that 
$
\val{T/e}{[e]}
=\val{T}{v}+\val{T}{w}-1
$ 
,
see Figure \ref{lot2}. 
\begin{figure}[h]
$$
\begin{array}{c}
\xy
(5,40)*{L(\mathcal{O})(T)},
(5,35)*[right]{=},
(0,10)*-<2pt>{\mathcal{O}(2)}="b",
(10,20)*-{ \mathcal{O}(3)}="d",
(0,30)*-<1pt>{\mathcal{O}(3)}="e",
(10,30)*-<2pt>{\mathcal{O}(0)}="f",
(-5,34);(5,25),{\ellipse<,11pt>{--}}
\ar@{}"b";"d"|{\displaystyle\otimes}
\ar@{}"d";"e"|{\displaystyle\otimes}
\ar@{}"d";"f"|{\displaystyle\otimes}
\ar(16,20);(70,20)^{\circ_{1}}
\ar(13,40);(60,40)^{L(\mathcal{O})(p_{e}^{T})}
\endxy
\end{array}\hspace{-33pt}
\begin{array}{c}
\vspace{-2pt}
\\
\xy
(5,40)*{L(\mathcal{O})(T/e)},
(5,35)*[right]{=},
(0,10)*-<2pt>{\mathcal{O}(2)}="b",
(10,20)*-{ \mathcal{O}(5)}="d",
(0,30)*{}="e",
(10,30)*-<2pt>{\mathcal{O}(0)}="f",
\ar@{}"b";"d"|{\displaystyle\otimes}
\ar@{}"d";"f"|{\displaystyle\otimes}
\endxy
\end{array}
$$
\caption{The morphism $L(\mathcal{O})(p_{e}^{T})$ for $T$ and $e=\{v_{3},v_{4}\}$ as in Figure \ref{pptl2}, see also Figure \ref{lot}.}
\label{lot2}
\end{figure}

The unit natural transformation $\mathcal{O}\r RL(\mathcal{O})$ is the identity morphism, and the counit $\varepsilon\colon LR(\mathcal{G})\r \mathcal{G}$ is defined by grafting isomorphisms,
$$\xy
(-12,-1.5)*{\varepsilon(T)\colon LR(\mathcal{G})(T)=\!\!\!\!\bigotimes\limits_{u\in I(T)}\!\!\!\!\mathcal{G}(C_{\val{T}{u}})},
(40,0)*{\mathcal{G}(T).}
\ar(11,0);(34,0)^-{\cong}_-{\text{grafting}}
\endxy$$
Here we use that any planted planar tree with leaves $T$ can be obtained by grafting appropriately the corollas $C_{\val{T}{u}}$, $u\in I(T)$, compare the previous lemma.
\end{proof}

Examples of planted planar trees with leaves illustrating relations~(1) and~(2) in 
Remark \ref{circi} are depicted in Figures \ref{asoc1} and  \ref{asoc2}, 
respectively.

\begin{figure}[h]
$$\xy
(0,0)*{}="a",
(0,10)*-<1pt>{\bullet}="b",
(-19,20)*-{\bullet}="c",
(0,20)*-<1pt>{\bullet}="d",
(19,20)*{}="e",
(-27,30)*{}="f",
(-23,30)*{}="g",
(-19,30)*{}="h",
(-15,30)*{}="i",
(-11,30)*{}="j",
(-6,30)*{}="k",
(-2,30)*{}="l",
(2,30)*{}="m",
(6,30)*{}="n",
\ar@{-}"a";"b",
\ar@{-}"b";"c",
\ar@{-}"b";"d",
\ar@{-}"b";"e",
\ar@{-}"c";"f",
\ar@{-}"c";"g",
\ar@{-}"c";"h",
\ar@{-}"c";"i",
\ar@{-}"c";"j",
\ar@{-}"d";"k",
\ar@{-}"d";"l",
\ar@{-}"d";"m",
\ar@{-}"d";"n",
\endxy$$
\caption{The planted planar tree with leaves illustrating the associativity relation $(C_l\circ_iC_m)\circ_jC_n=(C_l\circ_jC_n)\circ_{i+n-1}C_m$ in Remark \ref{circi} (1) for $l=3$, $m=4$, $n=5$, and $j=1<i=2$.} 
\label{asoc1}
\end{figure}

\begin{figure}[h]
$$\xy
(0,0)*{}="a",
(0,10)*-{\bullet}="b",
(-10,20)*{}="c",
(0,20)*-<1pt>{\bullet}="d",
(10,20)*{}="e",
(-10,40)*{}="f",
(-6,40)*{}="g",
(-2,40)*{}="h",
(2,40)*{}="i",
(6,40)*{}="j",
(-6,30)*{}="k",
(-2,30)*-<1pt>{\bullet}="l",
(2,30)*{}="m",
(6,30)*{}="n",
\ar@{-}"a";"b",
\ar@{-}"b";"c",
\ar@{-}"b";"d",
\ar@{-}"b";"e",
\ar@{-}"l";"f",
\ar@{-}"l";"g",
\ar@{-}"l";"h",
\ar@{-}"l";"i",
\ar@{-}"l";"j",
\ar@{-}"d";"k",
\ar@{-}"d";"l",
\ar@{-}"d";"m",
\ar@{-}"d";"n",
\endxy$$
\caption{The planted planar tree with leaves illustrating the associativity relation $(C_l\circ_iC_m)\circ_jC_n=C_l\circ_i(C_m\circ_{j-i+1}C_n)$ in Remark \ref{circi} (2) for $l=3$, $m=4$, $n=5$, and $i=2\leq j=3<m+i=6$.} 
\label{asoc2}
\end{figure}


\section{The monoidal category of morphisms}\label{mcm}

The \emph{category $\mor{\C{C}}$ of morphisms in $\C C$} can be regarded as the category of functors $\dos\r\C{C}$, where $\dos$ is the category with two objects, $0$ and $1$, and only one non-identity morphism $0\r 1$, i.e. it is the poset $\{0<1\}$. A morphism $f\colon U\r V$ in $\C C$ is identified with the functor $f\colon\dos\r\C{C}$ defined by $f(0)=U$, $f(1)=V$ and $f(0\r 1)=f$.

The category $\mor{\C{C}}$ carries a biclosed monoidal structure given by the $\odot$ product of morphisms $f\odot g$,
$$\xy
(0,0)*+{U\otimes X}="a",
(35,0)*+{V\otimes X}="b",
(0,-18.2)*+{U\otimes Y}="c",
(35,-20)*+{U\otimes Y\!\bigcup\limits_{U\otimes X}\! V\otimes X}="d",
(70,-35)*+{V\otimes Y}="e"
\ar"a";"b"^{f\otimes \id{X}}
\ar@{}"a";"d"|{\text{push}}\ar"a";"c"_{\id{U}\otimes g}
\ar"b";"d"
\ar"c";(20,-18.2)
\ar@/^15pt/"b";"e"^{\id{V}\otimes g}
\ar@/_15pt/"c";"e"_{f\otimes\id{Y}}
\ar(40,-23);"e"^-{f\odot g}
\endxy$$
This monoidal structure is symmetric provided  $\otimes$ is. If $0$ denotes the initial object of $\C C$, the functor 
\begin{align*}
\C{C}&\To\mor{\C C},\\
X&\;\mapsto\; (0\r X),
\end{align*}
is strong (symmetric) monoidal. We regard $\C C$ as a full subcategory of $\mor{\C C}$ through this functor.

Notice that push-outs in $\C C$ are a special kind of morphism in $\mor{\C C}$. The following lemma asserts that the $\odot$ product preserves push-outs in $\C C$.

\begin{lem}\label{podot}
Given two push-out diagrams in $\C{C}$, $i=1,2$,
$$\xymatrix{U_i\ar[r]^{f_i}\ar[d]_{g_i}\ar@{}[rd]|{\text{push}}&V_i\ar[d]^{g'_i}\\X_i\ar[r]_{f'_i}&Y_i}$$
the following diagram in $\C{C}$ is also a push-out,
$$\xy
(0,-2)*{U_1\otimes V_2\!\!\!\!\bigcup\limits_{U_1\otimes U_2}\!\!\!\!V_1\otimes U_2},
(0,-22)*{X_1\otimes Y_2\!\!\!\!\bigcup\limits_{X_1\otimes X_2}\!\!\!\!Y_1\otimes X_2},
(40,0)*{V_1\otimes V_2},
(40,-20)*{Y_1\otimes Y_2},
(20,-10)*{\text{\scriptsize push}}
\ar(16,0);(33,0)^-{f_1\odot f_2}
\ar(40,-3);(40,-17)^-{g_1'\otimes g_2'}
\ar(16,-20);(33,-20)_-{f'_1\odot f'_2}
\ar(0,-6);(00,-17)_-{g_1\otimes g_2' \!\!\!\!\bigcup\limits_{g_1\otimes g_2}\!\!\!\! g_1'\otimes g_2}
\endxy$$
\end{lem}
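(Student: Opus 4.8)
The plan is to deduce everything from the single fact that $\C C$ is biclosed: for every object $Z$ the functors $Z\otimes-$ and $-\otimes Z$ are left adjoints, hence preserve all colimits, and in particular pushouts. Writing $Y_i=V_i\cup_{U_i}X_i$ for the two given pushouts, the displayed square is exactly the assertion that $Y_1\otimes Y_2$ is the pushout of the span
$$Q'\;\longleftarrow\; Q\;\longrightarrow\; V_1\otimes V_2,$$
where $Q=U_1\otimes V_2\cup_{U_1\otimes U_2}V_1\otimes U_2$ and $Q'=X_1\otimes Y_2\cup_{X_1\otimes X_2}Y_1\otimes X_2$ are the sources of $f_1\odot f_2$ and $f_1'\odot f_2'$, the left vertical map being the one induced by the $g_i,g_i'$. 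So the claim is that $\odot$ sends a pair of pushout squares to a pushout square.

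First I would expand $Y_1\otimes Y_2$ as a single colimit. Applying preservation of pushouts by $\otimes$ in each variable to both $Y_1=V_1\cup_{U_1}X_1$ and $Y_2=V_2\cup_{U_2}X_2$ realizes $Y_1\otimes Y_2=(\colim_\Lambda A_1)\otimes(\colim_\Lambda A_2)$ as the colimit of the $3\times 3$ diagram whose entries are the nine tensor products $A_1\otimes A_2$ with $A_i\in\{V_i,U_i,X_i\}$ and whose arrows are induced by $U_i\r V_i$ and $U_i\r X_i$. These nine objects split into three disjoint groups: the single corner $V_1\otimes V_2$; the three objects $U_1\otimes V_2,\,U_1\otimes U_2,\,V_1\otimes U_2$ whose colimit is $Q$; and the remaining five, forming the lower-right ``L'', whose colimit is $Q'$ (here one uses again that $\otimes$ preserves the pushouts defining $X_1\otimes Y_2$ and $Y_1\otimes X_2$). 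I would then reorganize this colimit by computing it in stages, forming first the three partial colimits and then gluing: since $Q$ maps compatibly into $V_1\otimes V_2$ (via the cocone into the corner, i.e.\ $f_1\odot f_2$) and into $Q'$ (via the ``down'' and ``right'' arrows of the grid), repeated application of the pasting lemma for pushouts identifies $Y_1\otimes Y_2$ with the pushout of $Q'\l Q\r V_1\otimes V_2$, which is the displayed square. Equivalently, one may sidestep the $3\times 3$ bookkeeping by exploiting that $\odot$ is a \emph{bifunctor} on $\mor{\C C}$: regarding each given pushout square as a morphism $\Phi_i$ in $\mor{\C C}$, one factors
$$\Phi_1\odot\Phi_2=(\Phi_1\odot\id{f_2'})\circ(\id{f_1}\odot\Phi_2)$$
as the vertical composite of two commutative squares in $\C C$, and reduces by the pasting lemma to the one-variable statement that $\odot$-ing a pushout square with a fixed arrow yields a pushout square; that statement is the same reorganization on a smaller $2\times 3$ diagram.

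The hard part will be precisely this reorganization of the colimit: one must check that the partition of the nine (resp.\ six) objects into the pieces computing $V_1\otimes V_2$, $Q$ and $Q'$ is compatible with all the structure maps, so that gluing the partial colimits along $Q$ reproduces the \emph{total} colimit and not a proper quotient or subobject. The one delicate point is the well-definedness of the induced map $Q\r Q'$ on the central object $U_1\otimes U_2$ of $Q$, which is not itself one of the five objects computing $Q'$: its image must be read off via two different grid routes. Here I would invoke functoriality of $\otimes$ to observe that both routes factor as $U_1\otimes U_2\r X_1\otimes X_2\r Q'$ through the gluing locus $X_1\otimes X_2$, where they agree because both equal $(U_1\r X_1)\otimes(U_2\r X_2)$. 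Once this matching of cocones is in place, the pasting lemma together with the preservation of pushouts by $\otimes$ finishes the argument.
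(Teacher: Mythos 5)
Your proposal is correct and takes essentially the same approach as the paper: the paper's entire proof is the remark that the lemma ``follows straightforwardly from the very definition of $\odot$ together with the fact that $\otimes$ is biclosed, and hence it preserves colimits in both variables,'' and both of your routes (the $3\times 3$ grid with colimits computed in stages, and the bifunctor factorization $\Phi_1\odot\Phi_2=(\Phi_1\odot\id{f_2'})\circ(\id{f_1}\odot\Phi_2)$ reducing to the one-variable case) are precisely careful implementations of that observation. In particular, your treatment of the one delicate point --- that the induced map $Q\r Q'$ is well defined on $U_1\otimes U_2$ because both grid routes factor through $X_1\otimes X_2$, where the two legs into $Q'$ agree by the very definition of $Q'$ as a push-out --- is exactly what makes the ``straightforward'' claim legitimate.
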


This lemma follows straightforwardly from the very definition of $\odot$ together with the fact that $\otimes$ is biclosed, and hence it preserves colimits in both variables.

\begin{defn}\label{defm}
The category $\C C$ is a \emph{monoidal model category} if it is endowed with a model structure satisfying the \emph{push-out product axiom}: 
\begin{itemize}
\item Let  $f$ and $g$ be cofibrations in $\C{C}$. The morphism $f\odot g$ is also a cofibration. If in addition $f$ or $g$ is a weak equivalence, then so is $f\odot g$. 
\end{itemize} 
This axiom was considered in \cite[Definition 3.1]{ammmc} for $\C{C}$ symmetric, but it also makes sense in the non-symmetric case.
\end{defn}

\begin{rem}
The push-out product axiom implies that the tensor product of cofibrant objects is cofibrant. Moreover, if $X$ is a cofibrant object and $f$ is a (trivial) cofibration in  $\C C$ then  $X\otimes f$ and $f\otimes X$ are (trivial) cofibrations. In particular, by Ken Brown's lemma \cite[Lemma 1.1.12]{hmc}, for $X$ cofibrant the functors $X\otimes -$ and $-\otimes X$ preserve weak equivalences between cofibrant objects. Furthermore, if~$f$ and $g$ are (trivial) cofibrations with cofibrant source, then so is $f\odot g$.
\end{rem}

\begin{lem}\label{wedot}
Let $\C{C}$ be a left proper monoidal model category. Consider two commutative squares in $\mor{\C C}$ where the rows are cofibrations and the columns are weak equivalences between cofibrant objects,  $i=1,2$,
$$\xymatrix{U_i\ar@{ >->}[r]^{f_i}\ar[d]_{g_i}^{\sim}    
&V_i\ar[d]^{g'_i}_{\sim}\\X_i\ar@{ >->}[r]_{f'_i}&Y_i}$$
Then in the following diagram the rows are also cofibrations and the columns are weak equivalences between cofibrant objects,
$$\xy
(0,-2)*{U_1\otimes V_2\!\!\!\!\bigcup\limits_{U_1\otimes U_2}\!\!\!\!V_1\otimes U_2},
(0,-22)*{X_1\otimes Y_2\!\!\!\!\bigcup\limits_{X_1\otimes X_2}\!\!\!\!Y_1\otimes X_2},
(40,0)*{V_1\otimes V_2},
(40,-20)*{Y_1\otimes Y_2},
(20,-10)*{\text{\scriptsize push}}
\ar@{ >->}(16,0);(33,0)^-{f_1\odot f_2}
\ar(40,-3);(40,-17)^-{g_1'\otimes g_2'}_{\sim}
\ar@{ >->}(16,-20);(33,-20)_-{f'_1\odot f'_2}
\ar(0,-6);(00,-17)_-{g_1\otimes g_2' \!\!\!\!\bigcup\limits_{g_1\otimes g_2}\!\!\!\! g_1'\otimes g_2}^{\sim}
\endxy$$
\end{lem}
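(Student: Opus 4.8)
The plan is to verify separately the three assertions in the conclusion: that the two rows $f_1\odot f_2$ and $f'_1\odot f'_2$ are cofibrations, that the right-hand column $g'_1\otimes g'_2$ is a weak equivalence between cofibrant objects, and that the left-hand column is a weak equivalence between cofibrant objects. The last of these is the crux and is the only place where left properness is used.

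First I would dispatch the rows and the right column. Since the columns of the two given squares are weak equivalences between cofibrant objects, all of $U_i,V_i,X_i,Y_i$ are cofibrant, and the maps $f_i$ and $f'_i$ are cofibrations with cofibrant source. By the remark following Definition~\ref{defm}, $f_1\odot f_2$ and $f'_1\odot f'_2$ are then cofibrations with cofibrant source; this gives the two rows, and moreover shows that their sources $P=U_1\otimes V_2\cup_{U_1\otimes U_2}V_1\otimes U_2$ and $Q=X_1\otimes Y_2\cup_{X_1\otimes X_2}Y_1\otimes X_2$ are cofibrant. For the right column, the push-out product axiom makes $V_1\otimes V_2$ and $Y_1\otimes Y_2$ cofibrant; factoring $g'_1\otimes g'_2=(g'_1\otimes\id{Y_2})\circ(\id{V_1}\otimes g'_2)$ and using that tensoring with a cofibrant object preserves weak equivalences between cofibrant objects (Ken Brown's lemma, as recorded in that same remark), both factors are weak equivalences, hence so is $g'_1\otimes g'_2$ by two-out-of-three.

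For the left column I would realize $P\to Q$ as the map induced on push-outs by a morphism of spans. Indeed $P$ is the push-out of $V_1\otimes U_2\leftarrow U_1\otimes U_2\rightarrow U_1\otimes V_2$ and $Q$ is the push-out of $Y_1\otimes X_2\leftarrow X_1\otimes X_2\rightarrow X_1\otimes Y_2$, and the three maps $g_1\otimes g_2$, $g'_1\otimes g_2$, $g_1\otimes g'_2$ form a commutative ladder between these spans inducing $P\to Q$. In each span both legs have the form $(\text{cofibration})\otimes(\text{cofibrant})$ or $(\text{cofibrant})\otimes(\text{cofibration})$, so they are cofibrations by the remark; and each of the three vertical maps is a weak equivalence between cofibrant objects, by exactly the two-out-of-three argument used for the right column.

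The main obstacle is the passage from these three corner weak equivalences to a weak equivalence $P\to Q$ on push-outs, and this is where left properness enters. Here I would invoke the gluing lemma for left proper model categories: given a morphism of spans whose vertical components are weak equivalences and in which the relevant legs are cofibrations, the induced map of push-outs is a weak equivalence (this is essentially the invariance of homotopy push-outs under weak equivalences in a left proper model category, and can be deduced from the defining property that cobase change along a cofibration preserves weak equivalences). Applied to the ladder above it yields that $P\to Q$ is a weak equivalence; combined with the cofibrancy of $P$ and $Q$ already established, the left column is a weak equivalence between cofibrant objects, which completes the proof.
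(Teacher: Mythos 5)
Your proof is correct and follows essentially the same route as the paper's: the rows, the right column, and the cofibrancy of all four corners are dispatched via the push-out product axiom and the remark after Definition~\ref{defm}, and the left column is then handled by the gluing lemma for left proper model categories (\cite[Proposition 13.5.4]{hirschhorn}). Your write-up merely spells out the details that the paper leaves implicit.
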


\begin{proof}
Looking at Definition \ref{defm} and the remark afterwards we notice that it is only left to check that the left column is a weak equivalence. This follows easily from the  gluing property in left proper model  categories \cite[Proposition 13.5.4]{hirschhorn}. 
\end{proof}

Given morphisms $f_{i}\colon U_{i}\r V_{i}$ in $\C C$, $1\leq i\leq n$, the target of $f_{1}\odot\cdots\odot f_{n}$ is the iterated tensor product of the targets $V_{1}\otimes\cdots\otimes V_{n}$. This object is the colimit of the diagram
$$f_{1}\otimes\cdots\otimes f_{n}\colon \dos^{n}\To\C{C},$$
since $\dos^{n}$ has a final object $(1,\st{n}\dots,1)$. The source of $f_{1}\odot\cdots\odot f_{n}$ is the colimit of the restriction of this diagram to the full subcategory of $\dos^{n}$ obtained by removing the final object.  For simplicity, we  denote it by $s(f_{1}\odot\cdots\odot f_{n})$,
$$f_{1}\odot\cdots\odot f_{n}\colon s(f_{1}\odot\cdots\odot f_{n})\To V_{1}\otimes\cdots\otimes V_{n}.$$
The universal property of $s(f_{1}\odot\cdots\odot f_{n})$ in $\C C$ refers to canonical morphisms
$$\kappa_{i}\colon V_{1}\otimes\cdots\otimes V_{i-1}\otimes U_{i}\otimes V_{i+1}\otimes\cdots\otimes V_{n}\To
s(f_{1}\odot\cdots\odot f_{n}),\quad 1\leq i\leq n,$$
with $(f_{1}\odot\cdots\odot f_{n})\kappa_{i}=\id{}^{\otimes(i-1)}\otimes f_{i}\otimes\id{}^{\otimes(n-i)}$.
Any collection of morphisms
$$g_{i}\colon V_{1}\otimes\cdots\otimes V_{i-1}\otimes U_{i}\otimes V_{i+1}\otimes\cdots\otimes V_{n}\To
X,\quad 1\leq i\leq n,$$
such that the following squares commute, $1\leq i<j\leq n$,
$$\xymatrix@C=50pt{
V_{1}\otimes\cdots\otimes U_{i}\otimes \cdots\otimes U_{j}\otimes \cdots\otimes V_{n}
\ar[r]^-{
\begin{array}{c}
\scriptstyle
\id{}\otimes\cdots\otimes f_{i}\otimes\cdots\otimes\id{}
\\
\vspace{-10pt}
\end{array}
}
\ar[d]_-{
\id{}\otimes\cdots\otimes f_{j}\otimes\cdots\otimes\id{}
}&
V_{1}\otimes\cdots\otimes V_{i}\otimes \cdots\otimes U_{j}\otimes \cdots\otimes V_{n}\ar[d]^-{g_{j}}\\
V_{1}\otimes\cdots\otimes U_{i}\otimes \cdots\otimes V_{j}\otimes \cdots\otimes V_{n}\ar[r]_-{g_{i}}&
X
}$$
induces a unique morphism $g\colon s(f_{1}\odot\cdots\odot f_{n}) \r X$ such that $g_{i}=g\kappa_{i}$, $1\leq i\leq n$.

\section{The relevant operad push-out}\label{ropo}

The forgetful functor from operads to sequences $\operad{\C V}\r\C{V}^{\mathbb{N}}$ has a left adjoint $\mathcal{F}\colon\C{V}^{\mathbb{N}}\r\operad{\C V}$, the \emph{free operad} functor, explicitly constructed for example in \cite[Appendix B]{cmmc}. An alternative construction using trees is as follows,
$$\mathcal{F}(V)(n)=\coprod_{T}\bigotimes_{v\in I(T)}V(\val{T}{v}),$$
where $T$ runs over a set of isomorphism classes of trees with $n$ leaves in $\mathbf{PPTL}$. The product $\circ_{i}$, $1\leq i\leq m$, 
$$\xy
(0,0)*{ \mathcal{F}(V)(m)\otimes \mathcal{F}(V)(n)=\coprod\limits_{T'}\bigotimes\limits_{u\in I(T')}V(\val{T'}{u})\;\otimes\; \coprod\limits_{T}\bigotimes\limits_{v\in I(T)}V(\val{T}{v})},
(60,1.5)*{\card L(T')=m},
(60,-4.5)*{\card L(T)=n},
(11.4,-10)*{\cong\coprod\limits_{T',T}\bigotimes\limits_{u\in I(T')}\hspace{-7pt}V(\val{T'}{u})\otimes\hspace{-6pt}\bigotimes\limits_{v\in I(T)}\hspace{-6pt}V(\val{T}{v})},
(-10,-30)*{\mathcal{F}(V)(m+n-1)=
\coprod\limits_{T''}\bigotimes\limits_{w\in I(T'')}V(\val{T''}{w})},
(53,-28.5)*{\card L(T'')=m+n-1},
\ar
(-26.5,-2);(-26.5,-25)_-{\circ_{i}}
\endxy$$
sends the factor corresponding to the trees $T$ and $T'$ in the source to the factor of $T''=T'\circ_{i}T$ in the target,
\begin{align*}
I(T'\circ_{i}T)&=I(T')\sqcup I(T),\\
\bigotimes\limits_{u\in I(T')}\hspace{-7pt}V(\val{T'}{u})\otimes\hspace{-5pt}\bigotimes\limits_{v\in I(T)}\hspace{-5pt}V(\val{T}{v})&=
\hspace{-10pt}\bigotimes\limits_{w\in I(T'\circ_{i}T)}\hspace{-10pt}V(\val{T'\circ_{i}T}{w}).
\end{align*}
The unit
$u\colon\unit\r\mathcal{F}(V)(1)$ is the inclusion of the factor of the coproduct correspoding to the tree with one leaf a no inner vertex, i.e. the unit of the grafting operation.

The unit of the adjunction $V\r \mathcal{F}(V)$ in $\C{V}^{\mathbb{N}}$ is given by the following morphisms in $\C V$, $n\geq 0$,
$$\xymatrix{V(n)\ar[rrr]^-{
\begin{array}{c}
\scriptstyle \text{inclusion of the factor}\vspace{-5pt}\\
\scriptstyle \text{corresponding to }C_n
\end{array}
}&&&\mathcal{F}(V)(n).}$$
Given an operad $\mathcal{O}$ with associated operadic functor $L(\mathcal{O})$, if we denote $p_T\colon T\r C_n$ the morphism in $\mathbf{PPTL}$ collapsing all inner edges of a tree $T$ with $n$ leaves, then the counit $\mathcal{F}(\mathcal{O})\r\mathcal{O}$ is defined by the following morphisms, $n\geq 0$,
$$\xy
(5,-1.5)*{\mathcal{F}(\mathcal{O})(n)=\coprod\limits_{T}\bigotimes\limits_{v\in I(T)}\mathcal{O}(\val{T}{v})=\coprod\limits_{T}L(\mathcal{O})(T)}, (70,0)*{L(\mathcal{O})(C_n)=\mathcal{O}(n).}
\ar(36,0);(55,0)^-{(L(\mathcal{O})(p_T))_T}
\endxy$$
An analogous construction for symmetric operads was considered by Ginzburg and Kapranov in \cite[2.1]{kdo}.

In this section we give an explicit construction of the push-out of a diagram in $\operad{\C{V}}$ as follows,
\begin{equation}\label{po}
\xymatrix{\mathcal{F}(U)\ar[d]_{g}\ar[r]^{\mathcal{F}(f)}&\mathcal{F}(V)\\
\mathcal{O}&}
\end{equation}

Consider the adjoint diagram in $\C{V}^{\mathbb N}$,
$$\xymatrix{U\ar[d]_{\bar g}\ar[r]^{f}&V\\
\mathcal{O}&}$$
The push-out of \eqref{po} is an operad $\mathcal{P}$ together with morphisms $f'\colon \mathcal{O}\r \mathcal{P}$ in $\operad{\C V}$ and $\bar{g}'\colon V\r \mathcal{P}$ in $\C{V}^{\mathbb N}$ such that
$f'\bar{g}=\bar{g}'f$ in $\C{V}^{\mathbb N}$. Moreover,  given an operad $\mathcal{P}'$ and morphisms $f''\colon \mathcal{O}\r \mathcal{P}'$ in $\operad{\C V}$ and $\bar{g}''\colon V\r \mathcal{P}'$ in $\C{V}^{\mathbb N}$ with $f''\bar{g}=\bar{g}''f$ in~$\C{V}^{\mathbb N}$, 
there is a unique morphism $h\colon \mathcal{P}\r \mathcal{P}'$ in $\operad{\C V}$ such that $f''=hf'$ and $\bar{g}''=h\bar{g}'$ in $\C{V}^{\mathbb N}$. 

Given a planted planar tree with leaves $T$ we denote
\begin{align*}
V^e(T)&=\{v\in V(T)\,;\,\level{v}\text{ is even}\},&V^o(T)&=V(T)\setminus V^e(T),\\
I^e(T)&=I(T)\cap V^e(T),&I^o(T)&=I(T)\cap V^o(T),
\end{align*}
see Figure \ref{eo}. From now on, we will only consider one tree in each isomorphism class of objects in ${\bf PPTL}$. 
\begin{figure}[h]
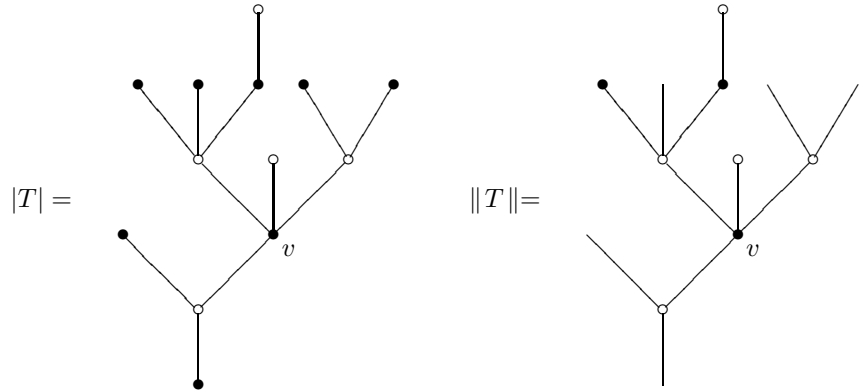

$$
|T|=\quad\begin{array}{c}
\xy
(0,0)*{\bullet}="a",
(0,10)*-<2pt>{\circ}="b",
(-10,20)*-<2pt>{\bullet}="c",
(10,20)*-{\bullet}="d",
(12,18)*{v},
(0,30)*-<1pt>{\circ}="e",
(10,30)*-<2pt>{\circ}="f",
(20,30)*-<2pt>{\circ}="g",
(-8,40)*-{\bullet}="h",
(0,40)*-{\bullet}="i",
(8,40)*-{\bullet}="j",
(0,50)*{}="k",
(8,50)*-<2pt>{\circ}="l",
(14,40)*-{\bullet}="m",
(26,40)*-{\bullet}="n",
\ar@{-}"a";"b",
\ar@{-}"b";"c",
\ar@{-}"b";"d",
\ar@{-}"d";"e",
\ar@{-}"d";"f",
\ar@{-}"d";"g",
\ar@{-}"e";"h",
\ar@{-}"e";"i",
\ar@{-}"e";"j",
\ar@{-}"j";"l",
\ar@{-}"g";"m",
\ar@{-}"g";"n",
\endxy
\end{array}
\qquad
\norm{T}=\quad\begin{array}{c}
\xy
(0,0)*{}="a",
(0,10)*-<2pt>{\circ}="b",
(-10,20)*{}="c",
(10,20)*-{\bullet}="d",
(12,18)*{v},
(0,30)*-<1pt>{\circ}="e",
(10,30)*-<2pt>{\circ}="f",
(20,30)*-<2pt>{\circ}="g",
(-8,40)*-{\bullet}="h",
(0,40)*{}="i",
(8,40)*-{\bullet}="j",
(0,50)*{}="k",
(8,50)*-<2pt>{\circ}="l",
(14,40)*{}="m",
(26,40)*{}="n",
\ar@{-}"a";"b",
\ar@{-}"b";"c",
\ar@{-}"b";"d",
\ar@{-}"d";"e",
\ar@{-}"d";"f",
\ar@{-}"d";"g",
\ar@{-}"e";"h",
\ar@{-}"e";"i",
\ar@{-}"e";"j",
\ar@{-}"j";"l",
\ar@{-}"g";"m",
\ar@{-}"g";"n",
\endxy
\end{array}
$$
\caption{For a  planted planar tree with leaves $T$, on the left (resp. right) we denote $\circ$ the vertices in $V^o(T)$ (resp. $I^o(T)$) and~$\bullet$ the vertices in $V^e(T)$ (resp. in $I^e(T)$).}
\label{eo}
\end{figure}

The idea behind our construction of the push-out of \eqref{po} is as follows. For any
planted planar tree with leaves concentrated in even levels, such as $T$ in Figure~\ref{eo}, we replace any inner even (resp. odd) vertex $v$ with the piece of $V$ (resp. $\mathcal{O}$) in
degree $\val{T}{v}$, and transform adjacency relations into tensor products.
\begin{center}
\includegraphics{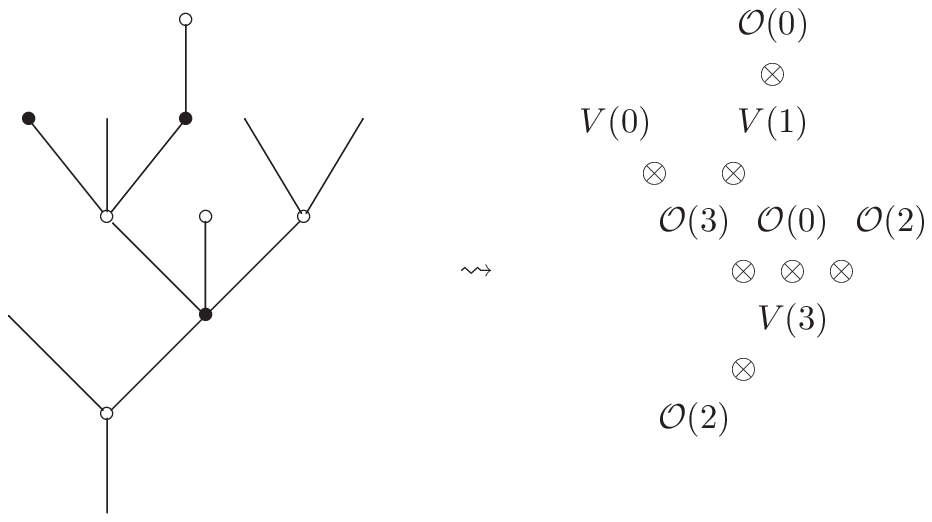}
\end{center}
In order to simplify the exposition of this intuitive idea, let us allow ourselves to
talk about elements of this object in $\C{V}$. We want to attach to $\mathcal{O}$ the product
of these elements in a coherent way. More precisely, if $T$ has $n$ leaves, we attach
these elements to $\mathcal{O}(n)$. For this, we must proceed by induction on the number of inner even vertices and require that, for any even inner vertex $v$, the image of the
morphism induced by $f(\val{T}{v})$,
\begin{center}
\includegraphics{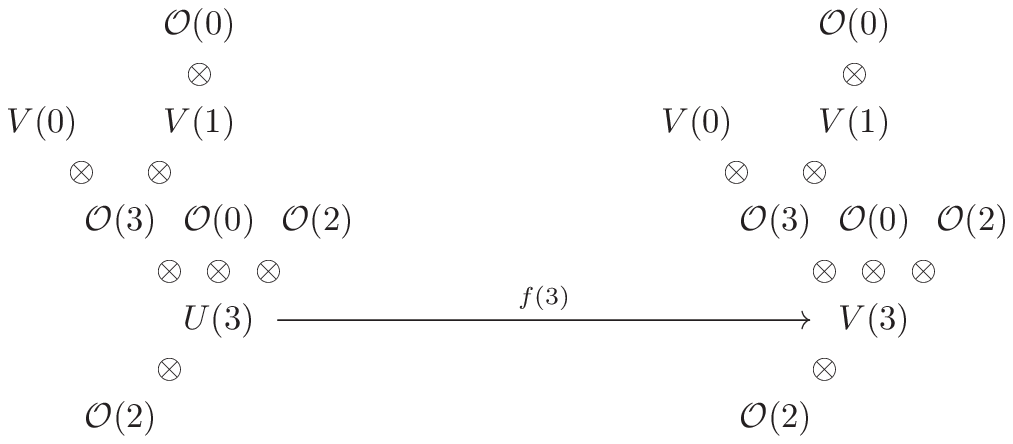}
\end{center}
is attached according to the attachment of the tree $T'$ with less even inner vertices
obtained from $T$ by contracting the edges surrounding $v$,
\begin{equation}\label{surround}
\includegraphics{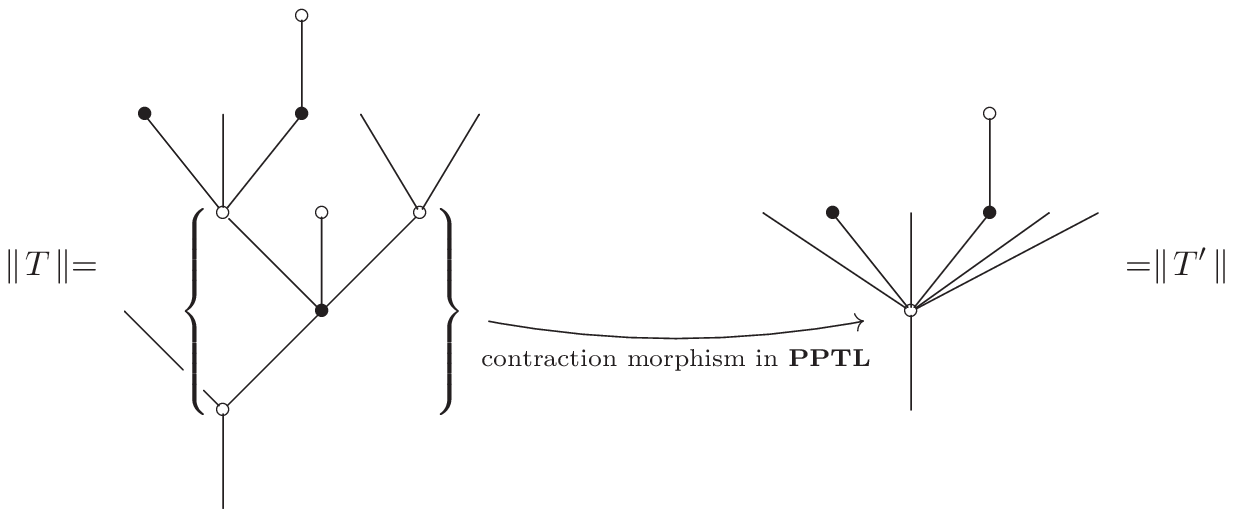}
\end{equation}
\begin{center}
\includegraphics{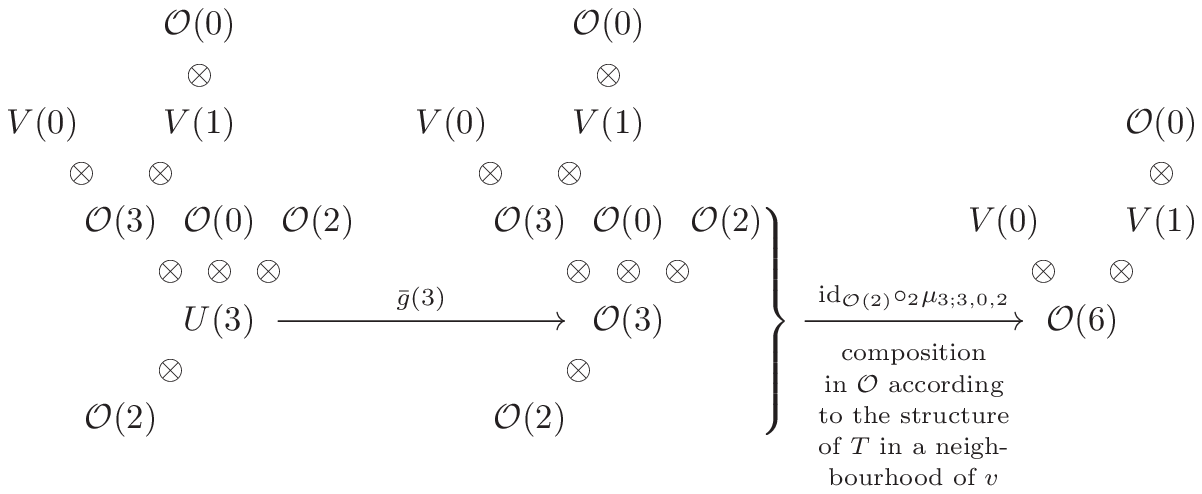}
\end{center}
This inductive construction is carried out in the following lemma. In order to state
it we need to introduce some terminology.

The \emph{star} of a vertex $v\in V(T)$ is the subtree $\star(v)\subset T$ formed by the edges containing $v$, and the link $\link(v)\subset V(T)$ consists of the vertices adjacent to $v$, see Figure \ref{starlink}.
\begin{figure}[h]
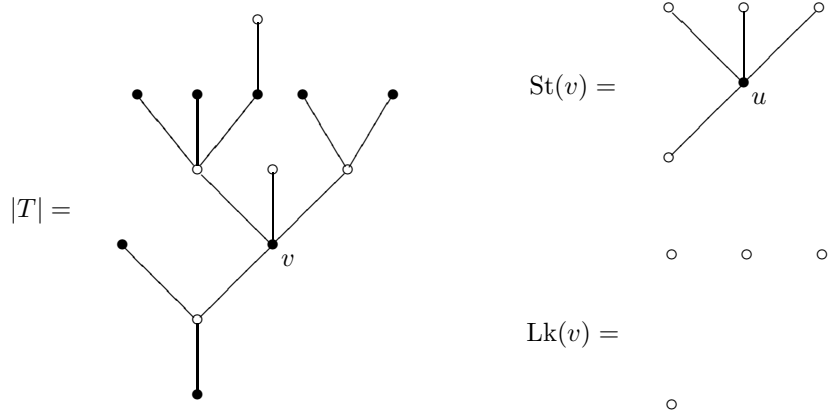

$$
|T|=\quad\begin{array}{c}
\xy
(0,0)*{\bullet}="a",
(0,10)*-<2pt>{\circ}="b",
(-10,20)*-<2pt>{\bullet}="c",
(10,20)*-{\bullet}="d",
(12,18)*{v},
(0,30)*-<1pt>{\circ}="e",
(10,30)*-<2pt>{\circ}="f",
(20,30)*-<2pt>{\circ}="g",
(-8,40)*-{\bullet}="h",
(0,40)*-{\bullet}="i",
(8,40)*-{\bullet}="j",
(0,50)*{}="k",
(8,50)*-<2pt>{\circ}="l",
(14,40)*-{\bullet}="m",
(26,40)*-{\bullet}="n",
\ar@{-}"a";"b",
\ar@{-}"b";"c",
\ar@{-}"b";"d",
\ar@{-}"d";"e",
\ar@{-}"d";"f",
\ar@{-}"d";"g",
\ar@{-}"e";"h",
\ar@{-}"e";"i",
\ar@{-}"e";"j",
\ar@{-}"j";"l",
\ar@{-}"g";"m",
\ar@{-}"g";"n",
\endxy
\end{array}\qquad\qquad
\begin{array}{c}
\star(v)=\quad\begin{array}{c}
\xy
(0,10)*-<2pt>{\circ}="b",
(10,20)*-{\bullet}="d",
(12,18)*{u},
(0,30)*-<1pt>{\circ}="e",
(10,30)*-<2pt>{\circ}="f",
(20,30)*-<2pt>{\circ}="g",
\ar@{-}"b";"d",
\ar@{-}"d";"e",
\ar@{-}"d";"f",
\ar@{-}"d";"g",
\endxy
\end{array}\vspace{30pt}\\
\link(v)=\quad\begin{array}{c}
\xy
(0,10)*-<2pt>{\circ}="b",
(0,30)*-<1pt>{\circ}="e",
(10,30)*-<2pt>{\circ}="f",
(20,30)*-<2pt>{\circ}="g",
\endxy
\end{array}
\end{array}
$$
\caption{The star and the link of the vertex $v$ of the tree $T$ in Figure \ref{eo}.}
\label{starlink}
\end{figure}
When the star is formed by inner edges,  the natural projection $$p_{\star(v)}^{T}\colon T\To T/\star(v)$$ is a morphism in ${\bf PPTL}$, see \eqref{surround}. This is the case if $v\in I^{e}(T)$ and $L(T)\subset V^{e}(T)$. Moreover, in this case $p_{\star(v)}^T$ induces identifications
\begin{align*}
I^{e}(T)\setminus\{v\}&= I^e(T/\star(v)),&
I^o(T)\setminus\link(v)&= I^o(T/\star(v))\setminus\{[\star(v)]\}.
\end{align*}
Furthermore, we will also consider the \emph{extended star} $\overline{\star}(v)\subset T$, which is the planted planar tree with leaves  whose inner part is $\star(v)$, the root edge is the outgoing edge of the minimum vertex $u\in\link(v)$, the leaves are the incoming edges of the vertices in $\link(v)$ except from $\{u,v\}$, and the planar order is the restriction of the planar order in $T$, see Figure \ref{extendedstar}.  Notice that $\overline{\star}(v)/{\star}(v)=C_{r_{v}}$, where
\begin{equation}\label{rsub}
r_{v}=\val{T/\star(v)}{[\star(v)]}=\card L(\overline{\star}(v))=\val{T}{u}-1+\hspace{-15pt}\sum_{w\in\link(v)\setminus\{u\}}\hspace{-15pt}
\val{T}{w}
=\left(\sum_{w\in\link(v)}\hspace{-5pt}\val{T}{w}\right)
-1.
\end{equation}
\begin{figure}[h]
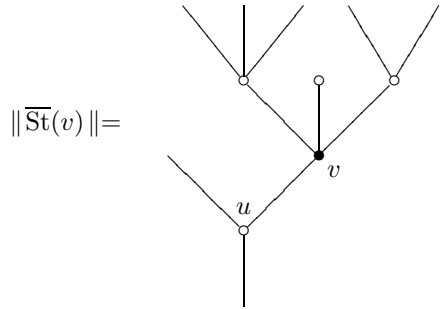

$$
\norm{\overline{\star}(v)}=\quad\begin{array}{c}
\xy
(0,0)*{}="a",
(0,10)*-<1pt>{\circ}="b",
(0,13)*{u},
(-10,20)*{}="c",
(10,20)*-{\bullet}="d",
(12,18)*{v},
(0,30)*-<1pt>{\circ}="e",
(10,30)*-<1pt>{\circ}="f",
(20,30)*-<1pt>{\circ}="g",
(-8,40)*{}="h",
(0,40)*{}="i",
(8,40)*{}="j",
(0,50)*{}="k",
(14,40)*{}="m",
(26,40)*{}="n",
\ar@{-}"a";"b",
\ar@{-}"b";"c",
\ar@{-}"b";"d",
\ar@{-}"d";"e",
\ar@{-}"d";"f",
\ar@{-}"d";"g",
\ar@{-}"e";"h",
\ar@{-}"e";"i",
\ar@{-}"e";"j",
\ar@{-}"g";"m",
\ar@{-}"g";"n",
\endxy
\end{array}
$$
\caption{The extended star of the vertex $v$ of the planted planar tree  with leaves $T$ in Figures \ref{eo} and \ref{starlink}.}
\label{extendedstar}
\end{figure}

The inductive construction of the push-out of \eqref{po} is the in following scaring lemma, whose statement is actually more complicated than its proof. For the sake of simplicity, from now on we use the same notation for an operad and for its associated operadic functor.

\begin{lem}\label{pind}
There is a sequence of morphisms in $\C{V}^{\mathbb N}$,
\begin{equation*}
\mathcal{O}=P_{0}\st{\varphi_1}\To P_{1}\r \cdots\r P_{t-1}\st{\varphi_t}\To P_t\r \cdots,
\end{equation*}
such that, for all $n\geq 0$, the morphism $\varphi_{t}(n)\colon P_{t-1}(n)\r P_{t}(n)$ is the push-out of the following coproduct of morphisms indexed by the set of planted trees with $n$ leaves concentrated in even levels and $t$ inner even vertices, i.e. $\card L(T)=n$, $L(T)\subset V^e(T)$, and $\card I^e(T)=t$, 
\begin{equation}\label{monstruo}
\coprod_{T}\bigodot_{v\in I^{e}(T)}f(\val{T}{v})\;\;\otimes\bigotimes_{w\in I^{o}(T)}\mathcal{O}(\val{T}{w}),
\end{equation}
along the unique morphism
\begin{equation}\label{monstruo2}
(\psi_{t}^{T})_{T}\colon \coprod_{T}s\left(\bigodot_{v\in I^{e}(T)}f(\val{T}{v})\right)\otimes\bigotimes_{w\in I^{o}(T)}\mathcal{O}(\val{T}{w})\To P_{t-1}(n)
\end{equation}
such that, given $u\in I^{e}(T)$, for $t=1$ the morphism $\psi_{1}^{T}$ is 
$$\xy
(0,0)*{U(\val{T}{u})\otimes 
\hspace{-7pt}\bigotimes\limits_{w\in I^o(T)}\hspace{-7pt}\mathcal{O}(\val{T}{w})},
(60,0)*{\mathcal{O}(\val{T}{u})\otimes 
\hspace{-7pt}\bigotimes\limits_{w\in I^o(T)}\hspace{-7pt}\mathcal{O}(\val{T}{w})=\mathcal{O}(T)},
(100,1.5)*{\mathcal{O}(n),},
\ar@/^20pt/(3,5);(96,5)^{\psi_{1}^{T}}
\ar@/_15pt/(3,-4);(49,-4)_{\bar{g}(\val{T}{u})\otimes\id{}}
\ar@/_10pt/(77,-1);(100,-1)_{\mathcal{O}(p_{T})}
\endxy$$
and for $t>1$ the composite morphism
$$
\xy
(0,0)*{U(\val{T}{u})\otimes 
\hspace{-15pt}\bigotimes\limits_{v\in I^{e}(T)\setminus\{u\}}\hspace{-15pt} V(\val{T}{v})\otimes
\hspace{-7pt}\bigotimes\limits_{w\in I^o(T)}\hspace{-7pt}\mathcal{O}(\val{T}{w})},
(60,0)*{s(\hspace{-8pt}\bigodot\limits_{v\in I^{e}(T)}\!\! f(\val{T}{v}))\otimes\hspace{-8pt}\bigotimes\limits_{w\in I^{o}(T)}\hspace{-8pt}\mathcal{O}(\val{T}{w})},
(95,1.5)*{P_{t-1}(n)}
\ar(24,1.5);(40,1.5)^-{\kappa_{u}\otimes\id{}}
\ar(79,1.5);(88,1.5)^-{\psi_t^T}
\endxy
$$
coincides with the following composition, that we call $\psi_{t,u}^T$, 
$$\xy
(0,0)*{U(\val{T}{u})\otimes 
\hspace{-15pt}\bigotimes\limits_{v\in I^{e}(T)\setminus\{u\}}\hspace{-15pt} V(\val{T}{v})\otimes
\hspace{-7pt}\bigotimes\limits_{w\in I^o(T)}\hspace{-7pt}\mathcal{O}(\val{T}{w})},
(0,-20)*{\mathcal{O}(\val{T}{u})\otimes 
\hspace{-15pt}\bigotimes\limits_{v\in I^{e}(T)\setminus\{u\}}\hspace{-15pt} V(\val{T}{v})\otimes
\hspace{-7pt}\bigotimes\limits_{w\in I^o(T)}\hspace{-7pt}\mathcal{O}(\val{T}{w})},
(0,-40)*{
\hspace{-15pt}\bigotimes\limits_{v\in I^{e}(T)\setminus\{u\}}\hspace{-15pt} V(\val{T}{v})\;\;\otimes
\hspace{-15pt}\bigotimes\limits_{w\in I^o(T)\setminus\link(u )}\hspace{-20pt}\mathcal{O}(\val{T}{w})\;\;\otimes \;\;\mathcal{O}(\overline{\star}(u))},
(0,-60)*{
\hspace{-15pt}\bigotimes\limits_{v\in I^{e}(T/{\star}(u))}\hspace{-15pt} V(\val{T}{v})\;\;\;\;\otimes
\hspace{-25pt}\bigotimes\limits_{w\in I^o(T/{\star}(u))\setminus\{[\star(u )]\}}\hspace{-30pt}\mathcal{O}(\val{T}{w})\;\;\;\;\otimes\;\;\;\; \mathcal{O}(r_{u})},
(0,-78)*{P_{t-1}(n)},
\ar(0,-4);(0,-15)_{\bar{g}(\val{T}{u })\otimes\id{}}
\ar(0,-24);(0,-35)_{\cong}^{\text{symmetry}}
\ar(0,-45);(0,-55)_{\id{}\otimes\mathcal{O}(p_{\overline{\star}(u)})}
\ar(0,-65);(0,-75)_{\bar{\psi}_{t-1}^{T/\star(u)}}
\endxy$$
Here $(\bar{\psi}_{t-1}^{T'})_{T'}$ denotes the push-out of $({\psi}_{t-1}^{T'})_{T'}$, i.e. \eqref{monstruo2} for $t-1$, along  \eqref{monstruo}.
\end{lem}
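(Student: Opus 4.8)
The plan is to construct the sequence by induction on $t$, defining $P_t$ at each stage as the pushout displayed in the statement and extracting the attaching map $(\psi_t^T)_T$ in \eqref{monstruo2} from the universal property of the source of an iterated $\odot$-product recalled at the end of Section~\ref{mcm}. The base case $t=1$ requires no gluing: a tree $T$ with a single even inner vertex $u$ has $I^e(T)=\{u\}$, so $s(\bigodot_{v\in I^e(T)}f(\val{T}{v}))=U(\val{T}{u})$ and $\psi_1^T$ is simply the displayed composite $\mathcal{O}(p_T)\circ(\bar{g}(\val{T}{u})\otimes\id{})$; we let $P_1$ be the resulting pushout. So assume $P_0,\dots,P_{t-1}$ and the maps $\psi_s^{T'}$ together with their pushout legs $\bar{\psi}_s^{T'}$ have been built for all $s<t$ with the asserted properties.

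For the inductive step, fix $T$ with $\card I^e(T)=t$ and, for each $u\in I^e(T)$, let $\psi_{t,u}^T$ be the four-step composite of the statement. I first check that it is well defined, i.e. that it lands in $P_{t-1}(n)$. Since $L(T)\subset V^e(T)$, collapsing the star of an even inner vertex carries even levels to even levels, so $T/\star(u)$ again has its leaves concentrated in even levels and has exactly $t-1$ even inner vertices. The identifications $I^e(T)\setminus\{u\}=I^e(T/\star(u))$ and $I^o(T)\setminus\link(u)=I^o(T/\star(u))\setminus\{[\star(u)]\}$, together with $\mathcal{O}(r_u)=\mathcal{O}(\val{T/\star(u)}{[\star(u)]})$ from \eqref{rsub}, match the target of $\id{}\otimes\mathcal{O}(p_{\overline{\star}(u)})$ with the source of $\bar{\psi}_{t-1}^{T/\star(u)}$, so the composite makes sense and takes values in $P_{t-1}(n)$.

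The heart of the proof is to glue the maps $\psi_{t,u}^T$ over $u\in I^e(T)$ into a single $\psi_t^T$. By the universal property of Section~\ref{mcm} this reduces, for each pair $u\neq u'$, to the commuting square asserting that applying $f(\val{T}{u'})$ and then $\psi_{t,u}^T$ equals applying $f(\val{T}{u})$ and then $\psi_{t,u'}^T$. I would unfold both sides: on the first, the processing of $u$ touches only the slots in $\overline{\star}(u)$, hence commutes with $f(\val{T}{u'})$, and the relation $\bar{\psi}_{t-1}^{T/\star(u)}\circ(f(\val{T}{u'}))=\varphi_{t-1}\circ\psi_{t-1,u'}^{T/\star(u)}$ (the pushout square defining $P_{t-1}$ together with $\psi_{t-1}^{T/\star(u)}\circ\kappa_{u'}=\psi_{t-1,u'}^{T/\star(u)}$) rewrites it as $\varphi_{t-1}$ applied to two copies of $\bar{g}$, two operad multiplications $\mathcal{O}(p_{\overline{\star}(\cdot)})$, symmetries, and the structure map into $P_{t-2}(n)$ attached to the twice-contracted tree. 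The same unfolding of the second side uses the combinatorial identity $(T/\star(u))/\star(u')=(T/\star(u'))/\star(u)$, so both routes end at the \emph{same} tree; what remains is to match the two operad multiplications, and this is the step I expect to be the main obstacle. It splits according to the relative position of $u$ and $u'$: when $\overline{\star}(u)$ and $\overline{\star}(u')$ share no vertex the two multiplications act on disjoint tensor factors and commute after symmetry; when $u$ and $u'$ are siblings sharing their parent they are interchanged by the associativity relation~(1) of Remark~\ref{circi}; and when one is a grandchild of the other, sharing the intermediate odd vertex, they are interchanged by the nested associativity relation~(2). In all cases associativity and symmetry of $\mathcal{O}$ deliver the required commutativity.

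Granting this compatibility, the universal property produces a unique $\psi_t^T$ with $\psi_t^T\circ(\kappa_u\otimes\id{})=\psi_{t,u}^T$ for every $u\in I^e(T)$; assembling over all trees with $t$ even inner vertices gives the map \eqref{monstruo2}, and we define $P_t$ and $\varphi_t$ as the pushout of \eqref{monstruo} along it. This completes the induction and hence the construction of the sequence $\mathcal{O}=P_0\to P_1\to\cdots$ with the stated description of each $\varphi_t$.
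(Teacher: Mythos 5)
Your argument is correct and is essentially the paper's own proof: the same induction on $t$ (with $t\leq 1$ trivial), the same reduction via the universal property of $s\bigl(\bigodot_{v\in I^e(T)}f(\val{T}{v})\bigr)$ to a compatibility square for each pair $u\neq u'\in I^{e}(T)$, the same unfolding of both sides through the push-out identity $\bar{\psi}^{T/\star(u)}_{t-1}\circ(\id{}\otimes f(\val{T}{u'})\otimes\id{})=\varphi_{t-1}(n)\circ\psi^{T/\star(u)}_{t-1,u'}$ plus the induction hypothesis, and the same case analysis on the relative position of $u$ and $u'$; the only packaging difference is that where you cite relations (1)--(2) of Remark \ref{circi}, the paper applies the operadic functor to the contraction of the joint subtree, $\mathcal{O}(p_{T'})$, which by Proposition \ref{ofo} is the same content. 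One small correction: your first case should be delimited by $\star(u)\cap\star(u')=\emptyset$ (equivalently $d(u,u')>2$, equivalently $\link(u)\cap\link(u')=\emptyset$), not by disjointness of the \emph{extended} stars; when $d(u,u')=4$ the extended stars do share the midpoint vertex of the path joining $u$ and $u'$, yet the two contractions still consume disjoint tensor factors, so your ``disjoint slots'' argument is the one that applies there. With that reading your three cases are exhaustive and match the paper's split into $d(u,u')>2$ and the two $d(u,u')=2$ configurations of Figure \ref{configs}.
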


\begin{proof}
The proof is by induction on $t\geq 0$. Notice that there is nothing to check for $t=0,1$. Let $t>1$ and assume everything works up to $t-1$. By the universal property of the source of an iterated $\odot$ product, described in Section \ref{mcm}, we only have to check the following compatibility condition: given two different vertices $u,u'\in I^e(T)$, the following square commutes,
\begin{equation*}\tag{a}
\xy
(0,0)*{U(\val{T}{u})\otimes 
U(\val{T}{u'})\otimes 
\hspace{-17pt}\bigotimes\limits_{v\in I^e(T)\setminus\{u,u'\}}\hspace{-17pt} V(\val{T}{v})\otimes
\hspace{-7pt}\bigotimes\limits_{w\in I^o(T)}\hspace{-7pt}\mathcal{O}(\val{T}{w})},
(20,20)*{V(\val{T}{u})\otimes 
U(\val{T}{u'})\otimes 
\hspace{-17pt}\bigotimes\limits_{v\in I^e(T)\setminus\{u,u'\}}\hspace{-17pt} V(\val{T}{v})\otimes
\hspace{-7pt}\bigotimes\limits_{w\in I^o(T)}\hspace{-7pt}\mathcal{O}(\val{T}{w})},
(20,-20)*{U(\val{T}{u})\otimes 
V(\val{T}{u'})\otimes 
\hspace{-17pt}\bigotimes\limits_{v\in I^e(T)\setminus\{u,u'\}}\hspace{-17pt} V(\val{T}{v})\otimes
\hspace{-7pt}\bigotimes\limits_{w\in I^o(T)}\hspace{-7pt}\mathcal{O}(\val{T}{w})},
(70,1)*{P_{t-1}(n)}
\ar(0,4);(15,16)^{f(\val{T}{u})\otimes\id{}\;}
\ar(0,-4);(15,-16)_{\id{}\otimes f(\val{T}{u'})\otimes\id{}\;}
\ar(50,16);(68,4)^{\psi^T_{t,u'}}
\ar(50,-16);(68,-2)_{\psi^T_{t,u}}
\endxy
\end{equation*}
Here, for simplicity, we omit some symmetry isomorphisms in $\C V$.

Denote $\star(u,u')=\star(u)\cup \star(u')$ and $\link(u,u')=\link(u)\cup \link(u')$. Suppose that $d(u,u')>2$. Then $\star(u)\cap \star(u')=\emptyset$, see Figure \ref{dm2}. Moreover, in this case $t>2$.  
By induction hypotesis, in this case both compositions coincide with
\begin{equation}\label{ultima1}
\xy
(0,0)*{U(\val{T}{u})\otimes 
U(\val{T}{u'})\otimes 
\hspace{-17pt}\bigotimes\limits_{v\in I^e(T)\setminus\{u,u'\}}\hspace{-17pt} V(\val{T}{v})\otimes
\hspace{-7pt}\bigotimes\limits_{w\in I^o(T)}\hspace{-7pt}\mathcal{O}(\val{T}{w})},
(0,-20)*{\mathcal{O}(\val{T}{u})\otimes 
\mathcal{O}(\val{T}{u'})\otimes 
\hspace{-17pt}\bigotimes\limits_{v\in I^e(T)\setminus\{u,u'\}}\hspace{-17pt} V(\val{T}{v})\otimes
\hspace{-7pt}\bigotimes\limits_{w\in I^o(T)}\hspace{-7pt}\mathcal{O}(\val{T}{w})},
(0,-40)*{\bigotimes\limits_{v\in I^e(T)\setminus\{u,u'\}}\hspace{-20pt} V(\val{T}{v})\;\;\;\;\;\otimes
\hspace{-15pt}\bigotimes\limits_{w\in I^o(T)\setminus\link(u,u')}\hspace{-24pt}\mathcal{O}(\val{T}{w})
\;\;\;\;\otimes\;\;\;\;\mathcal{O}(\overline{\star}(u))\otimes 
\mathcal{O}(\overline{\star}(u'))},
(-9,-60)*{\bigotimes\limits_{v\in I^e(T/\star(u,u'))}\hspace{-25pt} V(\val{T}{v})\hspace{35pt}\otimes
\hspace{-35pt}\bigotimes\limits_{w\in I^o(T/\star(u,u'))\setminus\{[\star(u)],[\star(u')]\}}\hspace{-55pt}\mathcal{O}(\val{T}{w})\hspace{15pt}
\otimes\hspace{15pt}\mathcal{O}(r_{u})\otimes 
\mathcal{O}(r_{u'})},
(0,-80)*{P_{t-2}(n)},
(0,-100)*{P_{t-1}(n)}
\ar(-1,-5);(-1,-15)_{\bar{g}(\val{T}{u})\otimes \bar{g}(\val{T}{u'})\otimes\id{}}
\ar(-1,-25);(-1,-35)_\cong^{\text{symmetry}}
\ar(-1,-45);(-1,-55)^{\id{}\otimes\mathcal{O}(p_{\overline{\star}(u)})\otimes \mathcal{O}(p_{\overline{\star}(u')})}
\ar(-1,-65);(-1,-77)_{\bar{\psi}_{t-2}^{T/\star(u,u')}}
\ar(-1,-83);(-1,-97)_{\varphi_{t-1}(n)}
\endxy
\end{equation}
See Figures \ref{dm2} and \ref{dm22}.
\begin{figure}[h]
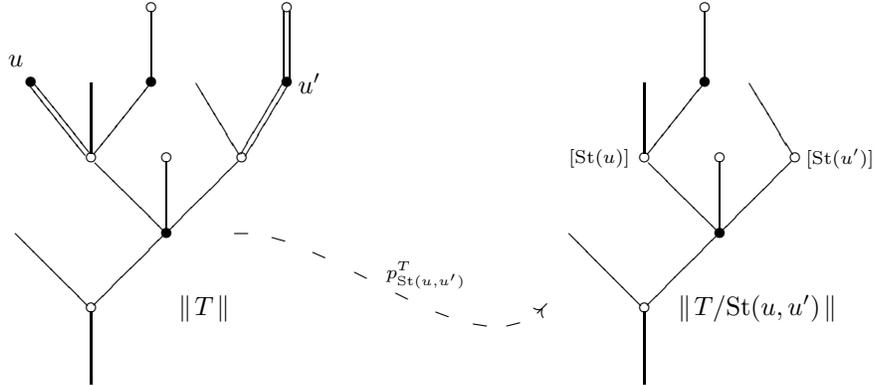

$$
\quad\begin{array}{c}
\xy
(0,0)*{}="a",
(15,10)*{\norm{T}},
(0,10)*-<2pt>{\circ}="b",
(-10,20)*{}="c",
(10,20)*-{\bullet}="d",
(29,40)*{u'},
(0,30)*-<1pt>{\circ}="e",
(10,30)*-<2pt>{\circ}="f",
(20,30)*-<2pt>{\circ}="g",
(-8,40)*-{\bullet}="h",
(-10,43)*{u},
(0,40)*{}="i",
(8,40)*-{\bullet}="j",
(0,50)*{}="k",
(8,50)*-<2pt>{\circ}="l",
(14,40)*{}="m",
(26,40)*-{\bullet}="n",
(26,50)*-<2pt>{\circ}="o",
\ar@{-}"a";"b",
\ar@{-}"b";"c",
\ar@{-}"b";"d",
\ar@{-}"d";"e",
\ar@{-}"d";"f",
\ar@{-}"d";"g",
\ar@{=}"e";"h",
\ar@{-}"e";"i",
\ar@{-}"e";"j",
\ar@{-}"j";"l",
\ar@{-}"g";"m",
\ar@{=}"g";"n",
\ar@{=}"n";"o",
\ar@(r,ld)@{-->}(20,20);(60,10)^{p^{T}_{\star(u,u')}}
\endxy
\end{array}
\begin{array}{c}
\xy
(0,0)*{}="a",
(15,10)*{\norm{T/\star(u,u')}},
(0,10)*-<2pt>{\circ}="b",
(-10,20)*{}="c",
(10,20)*-{\bullet}="d",
(26,30)*{\scriptstyle [\star(u')]},
(0,30)*-<1pt>{\circ}="e",
(10,30)*-<2pt>{\circ}="f",
(20,30)*-<2pt>{\circ}="g",
(-6,30)*{\scriptstyle [\star(u)]},
(0,40)*{}="i",
(8,40)*-{\bullet}="j",
(0,50)*{}="k",
(8,50)*-<2pt>{\circ}="l",
(14,40)*{}="m",
\ar@{-}"a";"b",
\ar@{-}"b";"c",
\ar@{-}"b";"d",
\ar@{-}"d";"e",
\ar@{-}"d";"f",
\ar@{-}"d";"g",
\ar@{-}"e";"i",
\ar@{-}"e";"j",
\ar@{-}"j";"l",
\ar@{-}"g";"m",
\endxy
\end{array}
$$
\caption{A planted planar tree $T$ with leaves  in even levels and two even inner vertices $u$ and $u'$ with $d(u,u')>2$. The disconnected subcomplex $\star(u,u')\subset T$ is in double lines. We illustrate the morphism~$p^{T}_{\star(u,u')}$. 
}
\label{dm2}
\end{figure}

\begin{figure}[h]
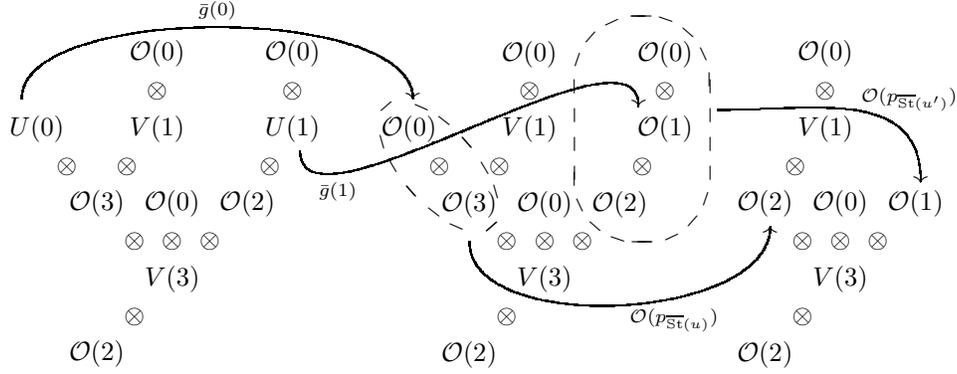

$$
\begin{array}{c}
\xy
(0,0)*{}="a",
(0,10)*-<2pt>{\mathcal{O}(2)}="b",
(-10,20)*{}="c",
(10,20)*-{ V(3)}="d",
(0,30)*-<1pt>{\mathcal{O}(3)}="e",
(10,30)*-<2pt>{\mathcal{O}(0)}="f",
(20,30)*-<2pt>{\mathcal{O}(2)}="g",
(-8,40)*-{ U(0)}="h",
(0,40)*{}="i",
(8,40)*-{ V(1)}="j",
(0,50)*{}="k",
(8,50)*-<2pt>{\mathcal{O}(0)}="l",
(14,40)*{}="m",
(26,40)*-{U(1)}="n",
(26,50)*-<2pt>{\mathcal{O}(0)}="o"
\ar@{}"a";"b"
\ar@{}"b";"c"
\ar@{}"b";"d"|{\displaystyle\otimes}
\ar@{}"d";"e"|{\displaystyle\otimes}
\ar@{}"d";"f"|{\displaystyle\otimes}
\ar@{}"d";"g"|{\displaystyle\otimes}
\ar@{}"e";"h"|{\displaystyle\otimes}
\ar@{}"e";"i"
\ar@{}"e";"j"|{\displaystyle\otimes}
\ar@{}"j";"l"|{\displaystyle\otimes}
\ar@{}"g";"m"
\ar@{}"g";"n"|{\displaystyle\otimes}
\ar@{}"n";"o"|{\displaystyle\otimes}
\ar@(d,u)(27,37);(72,43)_<(.2){\bar{g}(1)}
\ar@(u,u)(-10,44);(42,44)^-{\bar{g}(0)}
\endxy
\end{array}\hspace{-109pt}
\begin{array}{c}
\vspace{-3pt}
\\
\xy
(0,0)*{}="a",
(0,10)*-<2pt>{\mathcal{O}(2)}="b",
(-10,20)*{}="c",
(10,20)*-{ V(3)}="d",
(0,30)*-<1pt>{\mathcal{O}(3)}="e",
(10,30)*-<2pt>{\mathcal{O}(0)}="f",
(20,30)*-<2pt>{\mathcal{O}(2)}="g",
(-8,40)*-{ \mathcal{O}(0)}="h",
(0,40)*{}="i",
(8,40)*-{ V(1)}="j",
(0,50)*{}="k",
(8,50)*-<2pt>{\mathcal{O}(0)}="l",
(14,40)*{}="m",
(26,40)*-{\mathcal{O}(1)}="n",
(26,50)*-<2pt>{\mathcal{O}(0)}="o",
(23,40)*=<1.8cm,3cm>{}*\frm<30pt>{--},
(-11,43);(-4,35),{\ellipse<,14pt>{--}}
\ar@{}"a";"b"
\ar@{}"b";"c"
\ar@{}"b";"d"|{\displaystyle\otimes}
\ar@{}"d";"e"|{\displaystyle\otimes}
\ar@{}"d";"f"|{\displaystyle\otimes}
\ar@{}"d";"g"|{\displaystyle\otimes}
\ar@{}"e";"h"|{\displaystyle\otimes}
\ar@{}"e";"i"
\ar@{}"e";"j"|{\displaystyle\otimes}
\ar@{}"j";"l"|{\displaystyle\otimes}
\ar@{}"g";"m"
\ar@{}"g";"n"|{\displaystyle\otimes}
\ar@{}"n";"o"|{\displaystyle\otimes}
\ar@(r,u)(33,42.5);(60,33)^<(.64){\mathcal{O}(p_{\overline{\star}(u')})}
\ar@(d,d)(0,25);(40,27)_<(.6){\mathcal{O}(p_{\overline{\star}(u)})}
\endxy
\end{array}
\hspace{-114pt}
\begin{array}{c}
\vspace{7pt}
\\
\xy
(0,0)*{}="a",
(0,10)*-<2pt>{\mathcal{O}(2)}="b",
(-10,20)*{}="c",
(10,20)*-{ V(3)}="d",
(0,30)*-<1pt>{\mathcal{O}(2)}="e",
(10,30)*-<2pt>{\mathcal{O}(0)}="f",
(20,30)*-<2pt>{\mathcal{O}(1)}="g",
(-8,40)*-{}="h",
(0,40)*{}="i",
(8,40)*-{ V(1)}="j",
(0,50)*{}="k",
(8,50)*-<2pt>{\mathcal{O}(0)}="l",
(14,40)*{}="m",
(26,40)*-{}="n",
(26,50)*-<2pt>{}="o"
\ar@{}"a";"b"
\ar@{}"b";"c"
\ar@{}"b";"d"|{\displaystyle\otimes}
\ar@{}"d";"e"|{\displaystyle\otimes}
\ar@{}"d";"f"|{\displaystyle\otimes}
\ar@{}"d";"g"|{\displaystyle\otimes}
\ar@{}"e";"i"
\ar@{}"e";"j"|{\displaystyle\otimes}
\ar@{}"j";"l"|{\displaystyle\otimes}
\ar@{}"g";"m"
\endxy
\end{array}
$$
\caption{A sketch of \eqref{ultima1} for the  planted planar tree $T$ with leaves  in even levels and the two even inner vertices $u$ and $u'$ in Figure \ref{dm2}.}
\label{dm22}
\end{figure}

Suppose now that $d(u,u')=2$. Then the subcomplex $\star(u,u')\subset T$ is connected. Both factors share the unique vertex which is one step away from both $u$ and $u'$, see Figure \ref{configs}. 
\begin{figure}
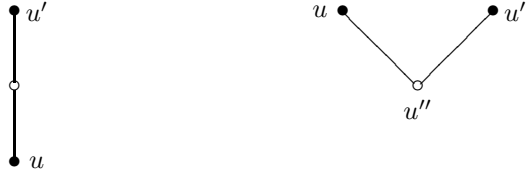

$$
\xy
(0,0)*-{\bullet}="a",
(0,-10)*-<1pt>{\circ}="b",
(0,-20)*-{\bullet}="c",
(3,0)*{u'},
(3,-20)*{u},
\ar@{-}"a";"b"
\ar@{-}"b";"c"
\endxy\qquad \qquad \qquad \qquad \qquad
\xy
(10,0)*-{\bullet}="a",
(0,-10)*-<2pt>{\circ}="b",
(-10,0)*-{\bullet}="c",
(13,0)*{u'},
(-13,0)*{u},
(0,-13)*{u''},
\ar@{-}"a";"b"
\ar@{-}"b";"c"
\endxy
$$
\caption{The only two possible relative positions of $u$ and $u'$, $u<u'$, within the planted planar tree with leaves $T$ if $d(u,u')=2$.}\label{configs}
\end{figure}

Let $T'\subset T$ be in this case the planted planar tree with leaves whose inner part is $\star(u,u')$, the root edge is the outgoing edge of the minimun vertex $u''\in\link(u,u')$, the leaves are the incoming edges of the vertices in $\link(u,u')$ not containing $u$ or~$u'$, and the planar order is the restriction of the planar order in~$T$. This planted planar tree 
has $m$ leaves, where
$$m=r_u+r_{u'}-1,$$ 
when the relative position of $u$ and $u'$ is as in the first diagram of Figure \ref{configs}, see also Figure \ref{tprima1}. If the relative position is as in the second diagram of Figure \ref{configs}, then 
$$m=\val{T}{u''}+r_u+r_{u'}-2,$$
see Figure \ref{tprima2}.


In this case, by induction hypothesis, the two possible compositions in the square~(a) coincide with the following morphism, see Figure \ref{tprima3} for an illustration,
\begin{equation}\label{ultima2}
\xy
(0,0)*{U(\val{T}{u})\otimes 
U(\val{T}{u'})\otimes 
\hspace{-17pt}\bigotimes\limits_{v\in I^e(T)\setminus\{u,u'\}}\hspace{-17pt} V(\val{T}{v})\otimes
\hspace{-7pt}\bigotimes\limits_{w\in I^o(T)}\hspace{-7pt}\mathcal{O}(\val{T}{w})},
(0,-20)*{\mathcal{O}(\val{T}{u})\otimes 
\mathcal{O}(\val{T}{u'})\otimes 
\hspace{-17pt}\bigotimes\limits_{v\in I^e(T)\setminus\{u,u'\}}\hspace{-17pt} V(\val{T}{v})\otimes
\hspace{-7pt}\bigotimes\limits_{w\in I^o(T)}\hspace{-7pt}\mathcal{O}(\val{T}{w})},
(-4,-40)*{\bigotimes\limits_{v\in I^e(T)\setminus\{u,u'\}}\hspace{-20pt} V(\val{T}{v})\;\;\;\;\otimes
\hspace{-18pt}\bigotimes\limits_{w\in I^o(T)\setminus\link(u,u')}\hspace{-27pt}\mathcal{O}(\val{T}{w})
\;\;\otimes\;\;\mathcal{O}(T')},
(-7,-60)*{\bigotimes\limits_{v\in I^e(T/\star(u,u'))}\hspace{-20pt} V(\val{T}{v})\;\;\;\;\;\;\;\;\otimes
\hspace{-25pt}\bigotimes\limits_{w\in I^o(T/\star(u,u'))\setminus\{[\star(u,u')]\}}\hspace{-45pt}\mathcal{O}(\val{T}{w})
\;\;\;\;\otimes\;\;\;\;\mathcal{O}(m)},
(0,-80)*{P_{t-2}(n)},
(0,-100)*{P_{t-1}(n)}
\ar(-1,-5);(-1,-15)_{\bar{g}(\val{T}{u})\otimes \bar{g}(\val{T}{u'})\otimes\id{}}
\ar(-1,-25);(-1,-35)_\cong^{\text{symmetry}}
\ar(-1,-45);(-1,-55)^{\id{}\otimes\mathcal{O}(p_{T'})}
\ar(-1,-65);(-1,-77)_{\bar{\psi}_{t-2}^{T/\star(u,u')}}^{\text{ or the identity if }t=2}
\ar(-1,-84);(-1,-97)_{\varphi_{t-1}(n)}
\endxy
\end{equation}

\begin{figure}
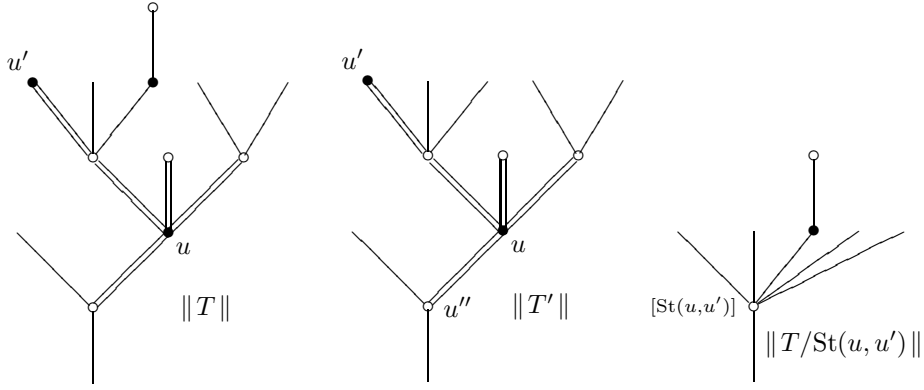

$$
\begin{array}{c}
\xy
(0,0)*{}="a",
(0,10)*-<2pt>{\circ}="b",
(-10,20)*{}="c",
(10,20)*-{\bullet}="d",
(12,18)*{u},
(0,30)*-<1pt>{\circ}="e",
(10,30)*-<2pt>{\circ}="f",
(20,30)*-<2pt>{\circ}="g",
(-8,40)*-{\bullet}="h",
(-10,43)*{u'},
(15,10)*{\norm{T}},
(0,40)*{}="i",
(8,40)*-{\bullet}="j",
(0,50)*{}="k",
(8,50)*-<2pt>{\circ}="l",
(14,40)*{}="m",
(26,40)*{}="n",
\ar@{-}"a";"b",
\ar@{-}"b";"c",
\ar@{=}"b";"d",
\ar@{=}"d";"e",
\ar@{=}"d";"f",
\ar@{=}"d";"g",
\ar@{=}"e";"h",
\ar@{-}"e";"i",
\ar@{-}"e";"j",
\ar@{-}"j";"l",
\ar@{-}"g";"m",
\ar@{-}"g";"n",
\endxy
\end{array}
\quad\begin{array}{c}
\xy
(0,0)*{}="a",
(0,10)*-<2pt>{\circ}="b",
(4,10)*{u''},
(-10,20)*{}="c",
(10,20)*-{\bullet}="d",
(12,18)*{u},
(0,30)*-<1pt>{\circ}="e",
(10,30)*-<2pt>{\circ}="f",
(20,30)*-<2pt>{\circ}="g",
(-8,40)*-{\bullet}="h",
(-10,43)*{u'},
(15,10)*{\norm{T'}},
(0,40)*{}="i",
(8,40)*{}="j",
(0,50)*{}="k",
(14,40)*{}="m",
(26,40)*{}="n",
\ar@{-}"a";"b",
\ar@{-}"b";"c",
\ar@{=}"b";"d",
\ar@{=}"d";"e",
\ar@{=}"d";"f",
\ar@{=}"d";"g",
\ar@{=}"e";"h",
\ar@{-}"e";"i",
\ar@{-}"e";"j",
\ar@{-}"g";"m",
\ar@{-}"g";"n",
\endxy
\end{array}
\begin{array}{c}
\xy
(0,0)*{}="a",
(0,10)*-<1pt>{\circ}="b",
(-8,10)*{\scriptstyle [\star(u,u')]},
(-10,20)*{}="c",
(0,30)*-<1pt>{}="e",
(20,30)*-<2pt>{}="g",
(12,5)*{\norm{T/\star(u,u')}},
(0,20)*{}="i",
(8,20)*-{\bullet}="j",
(0,50)*{}="k",
(8,30)*-<2pt>{\circ}="l",
(14,20)*{}="m",
(20,20)*{}="n",
\ar@{-}"a";"b",
\ar@{-}"b";"c",
\ar@{-}"b";"i",
\ar@{-}"b";"j",
\ar@{-}"j";"l",
\ar@{-}"b";"m",
\ar@{-}"b";"n",
\endxy
\end{array}
$$
\caption{An example of the planted planar tree with leaves~$T'$ for the relative position of the vertices $u$ and $u'$ as in the first digram of Figure \ref{configs}. The subcomplex $\star(u,u')\subset T$ is in double lines. We also depict $T/\star(u,u')$.}
\label{tprima1}
\end{figure}

\begin{figure}
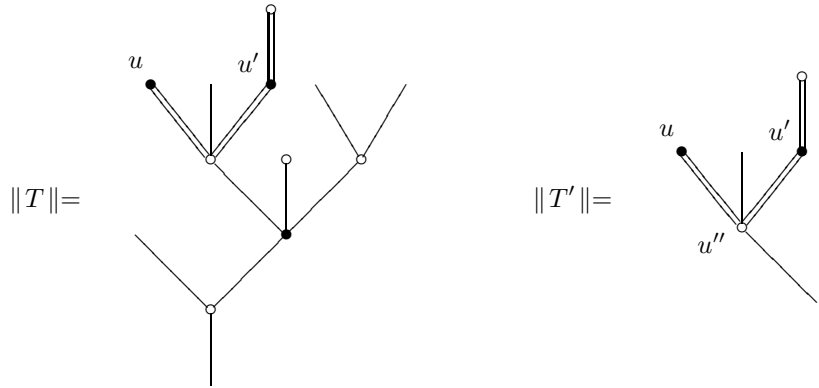

$$
\norm{T}=\quad\begin{array}{c}
\xy
(0,0)*{}="a",
(0,10)*-<2pt>{\circ}="b",
(-10,20)*{}="c",
(10,20)*-{\bullet}="d",
(12,18)*{},
(0,30)*-<1pt>{\circ}="e",
(10,30)*-<2pt>{\circ}="f",
(20,30)*-<2pt>{\circ}="g",
(-8,40)*-{\bullet}="h",
(-10,43)*{u},
(0,40)*{}="i",
(8,40)*-{\bullet}="j",
(5,43)*{u'},
(0,50)*{}="k",
(8,50)*-<2pt>{\circ}="l",
(14,40)*{}="m",
(26,40)*{}="n",
\ar@{-}"a";"b",
\ar@{-}"b";"c",
\ar@{-}"b";"d",
\ar@{-}"d";"e",
\ar@{-}"d";"f",
\ar@{-}"d";"g",
\ar@{=}"e";"h",
\ar@{-}"e";"i",
\ar@{=}"e";"j",
\ar@{=}"j";"l",
\ar@{-}"g";"m",
\ar@{-}"g";"n",
\endxy
\end{array}\qquad\qquad
\norm{T'}=\quad\begin{array}{c}
\xy
(10,20)*{}="d",
(12,18)*{},
(0,30)*-<1pt>{\circ}="e",
(-4,28)*{u''},
(-8,40)*-{\bullet}="h",
(-10,43)*{u},
(0,40)*{}="i",
(8,40)*-{\bullet}="j",
(5,43)*{u'},
(0,50)*{}="k",
(8,50)*-<2pt>{\circ}="l",
(14,40)*{}="m",
(26,40)*{}="n",
\ar@{-}"d";"e",
\ar@{=}"e";"h",
\ar@{-}"e";"i",
\ar@{=}"e";"j",
\ar@{=}"j";"l",
\endxy
\end{array}
$$
\caption{An example of the planted planar tree with leaves $T'$ for the relative position of the vertices $u$ and $u'$ as in the second digram of Figure \ref{configs}. The subcomplex $\star(u,u')\subset T$ is in double lines.}
\label{tprima2}
\end{figure}

\begin{figure}[h]
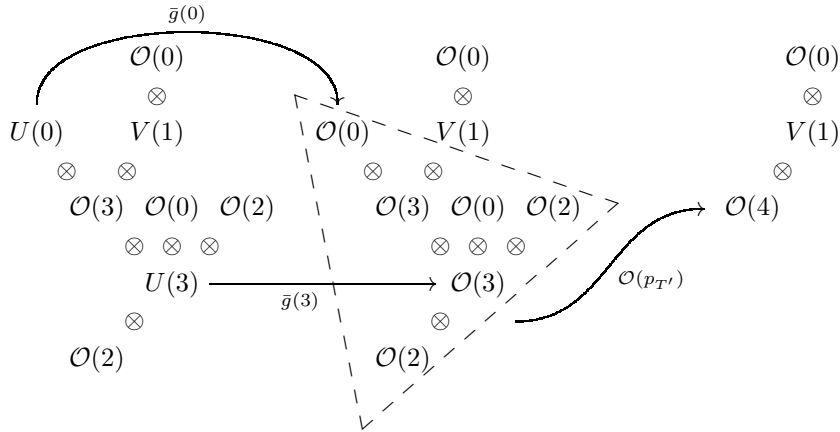

$$
\begin{array}{c}
\xy
(0,0)*{}="a",
(0,10)*-<2pt>{\mathcal{O}(2)}="b",
(-10,20)*{}="c",
(10,20)*-{ U(3)}="d",
(0,30)*-<1pt>{\mathcal{O}(3)}="e",
(10,30)*-<2pt>{\mathcal{O}(0)}="f",
(20,30)*-<2pt>{\mathcal{O}(2)}="g",
(-8,40)*-{ U(0)}="h",
(0,40)*{}="i",
(8,40)*-{ V(1)}="j",
(0,50)*{}="k",
(8,50)*-<2pt>{\mathcal{O}(0)}="l",
(14,40)*{}="m",
(26,40)*{}="n"
\ar@{}"a";"b"
\ar@{}"b";"c"
\ar@{}"b";"d"|{\displaystyle\otimes}
\ar@{}"d";"e"|{\displaystyle\otimes}
\ar@{}"d";"f"|{\displaystyle\otimes}
\ar@{}"d";"g"|{\displaystyle\otimes}
\ar@{}"e";"h"|{\displaystyle\otimes}
\ar@{}"e";"i"
\ar@{}"e";"j"|{\displaystyle\otimes}
\ar@{}"j";"l"|{\displaystyle\otimes}
\ar@{}"g";"m"
\ar@{}"g";"n"
\ar(15,20);(45,20)_<(.4){\bar{g}(3)}
\ar@(u,u)(-8,44);(32,44)^{\bar{g}(0)}
\endxy
\end{array}\hspace{-63pt}
\begin{array}{c}
\vspace{7pt}
\\
\xy
(0,0)*{}="a",
(0,10)*-<2pt>{\mathcal{O}(2)}="b",
(-10,20)*{}="c",
(10,20)*-{ \mathcal{O}(3)}="d",
(0,30)*-<1pt>{\mathcal{O}(3)}="e",
(10,30)*-<2pt>{\mathcal{O}(0)}="f",
(20,30)*-<2pt>{\mathcal{O}(2)}="g",
(-8,40)*-{ \mathcal{O}(0)}="h",
(0,40)*{}="i",
(8,40)*-{ V(1)}="j",
(0,50)*{}="k",
(8,50)*-<2pt>{\mathcal{O}(0)}="l",
(14,40)*{}="m",
(26,40)*{}="n",
(3,25.5),{\xypolygon3{~={-32}~:{(19,18):}~>{{--}}}}
\ar@{}"a";"b"
\ar@{}"b";"c"
\ar@{}"b";"d"|{\displaystyle\otimes}
\ar@{}"d";"e"|{\displaystyle\otimes}
\ar@{}"d";"f"|{\displaystyle\otimes}
\ar@{}"d";"g"|{\displaystyle\otimes}
\ar@{}"e";"h"|{\displaystyle\otimes}
\ar@{}"e";"i"
\ar@{}"e";"j"|{\displaystyle\otimes}
\ar@{}"j";"l"|{\displaystyle\otimes}
\ar@{}"g";"m"
\ar@{}"g";"n"
\ar@(r,l)(15,15);(40,30)_{\mathcal{O}(p_{T'})}
\endxy
\end{array}\hspace{-20pt}
\begin{array}{c}
\vspace{7pt}
\\
\xy
(0,0)*{}="a",
(0,10)*-<2pt>{}="b",
(-10,20)*{}="c",
(10,20)*-{}="d",
(0,30)*-<1pt>{\mathcal{O}(4)}="e",
(10,30)*-<2pt>{}="f",
(20,30)*-<2pt>{}="g",
(-8,40)*-{}="h",
(0,40)*{}="i",
(8,40)*-{ V(1)}="j",
(0,50)*{}="k",
(8,50)*-<2pt>{\mathcal{O}(0)}="l",
(14,40)*{}="m",
(26,40)*{}="n"
\ar@{}"e";"j"|{\displaystyle\otimes}
\ar@{}"j";"l"|{\displaystyle\otimes}
\endxy
\end{array}
$$
\caption{An illustration of \eqref{ultima2}  for~$T$, $u$ and $u'$ as in Figure \ref{tprima1}.}
\label{tprima3}
\end{figure}

\end{proof}

In the following lemma, we inductively construct an operad structure on the
colimit of the sequence defined in the former. Roughly speaking, we need
to define the multiplications $\circ_{i}$ of elements attached to $\mathcal{O}$ through planted planar
trees with leaves concentrated in even levels $T$ and $T'$, where $i$ is less than or equal to
the number of leaves of $T$. Consider for instance 
\begin{center}
\includegraphics{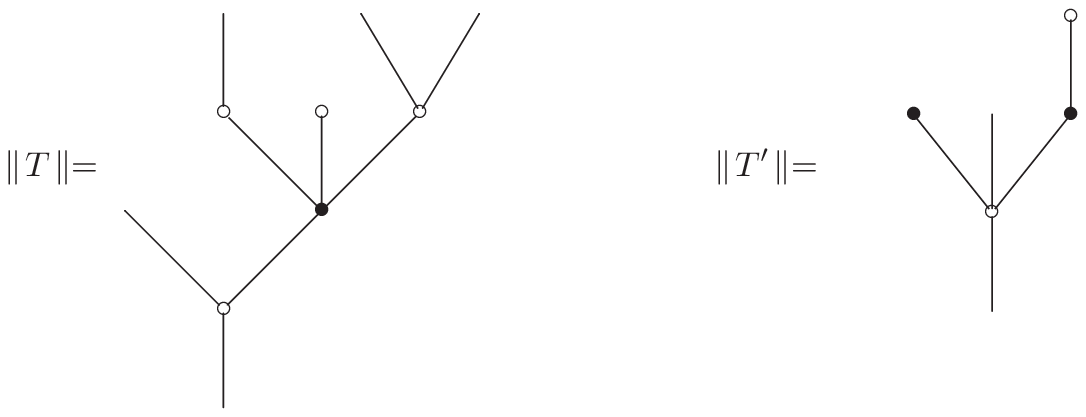}
\end{center}
In this case, in order to define $\circ_{2}$ we take $T\circ_{2}T'$ and the following associated tensor
product of objects in $V$ and $\mathcal{O}$,
\begin{center}
\includegraphics{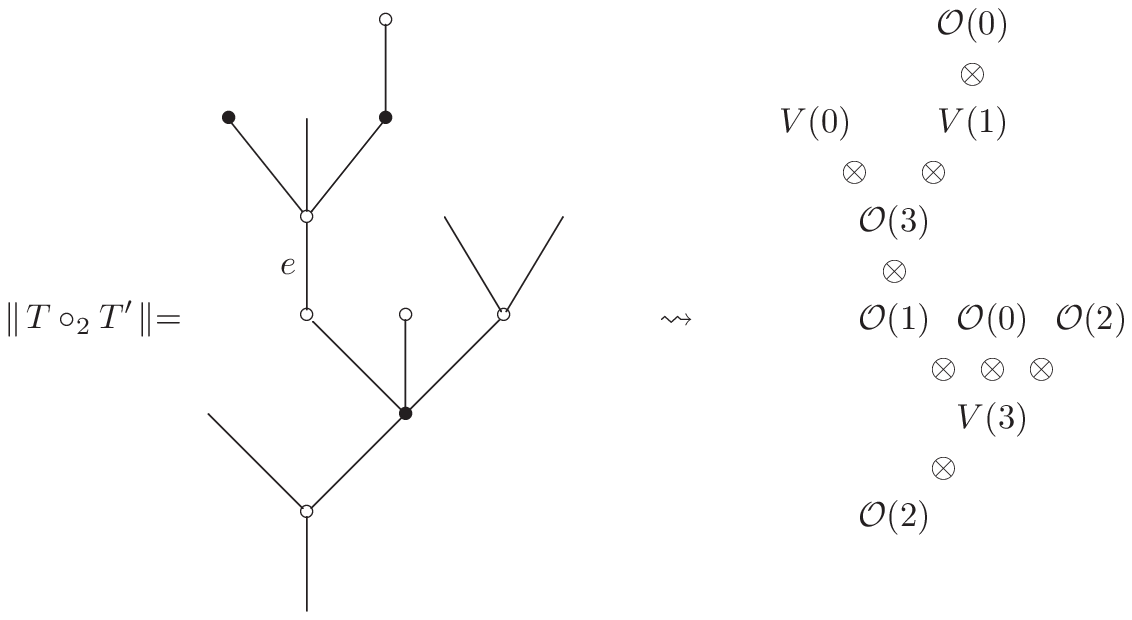}
\end{center}
Notice that this object in $\C V$ is just the tensor product of the objects associated to $T$
and $T'$. Then we contract the root edge $e$ of $T'$, which is identified with the second
leaf of $T$, and we get a planted planar tree with leaves in even levels $(T\circ_{2}T')/e$ .
\begin{center}
\includegraphics{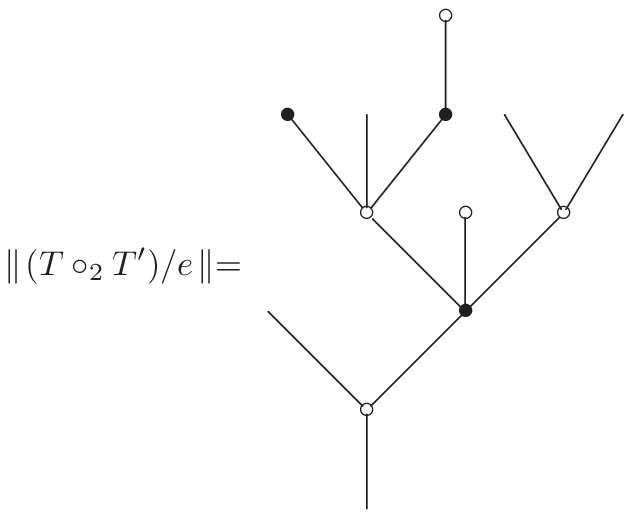}
\end{center}
This can be algebraically mimicked on the associated tensor product by means of
multiplication in $\mathcal{O}$ according to the local structure of $T\circ_{2}T'$ in a neighbourhood
of $e$, e.g. $e$ is the second leaf of $T$ but it is the first (and the only) one attached to its inner vertex in $T$,
\begin{center}
\includegraphics{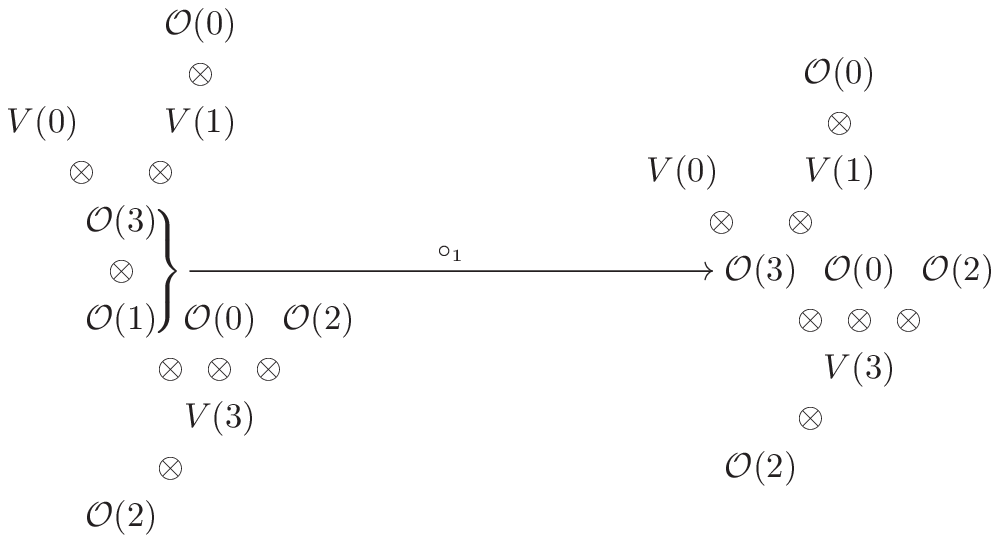}
\end{center}
We can define the $\circ_{2}$ multiplication of elements associated to $T$ and $T'$ via this
morphism and the attaching of elements associated to  $(T\circ_{2}T')/e$. We now formalize
this idea.

\begin{lem}\label{pond}
There are unique morphisms in $\C{V}$, $n,s,t\geq 0$, $1\leq i\leq m$,
$$c_{i}^{s,t}(m,n)\colon P_{s}(m)\otimes P_{t}(n)\To P_{s+t}(m+n-1),$$
such that $$c_i^{0,0}=\circ_i\colon\mathcal{O}(m)\otimes \mathcal{O}(n)\To \mathcal{O}(m+n-1)$$
is  the operad composition law, 
\begin{align*}
c_{i}^{s,t}(m,n)(\varphi_{s}(m)\otimes\id{})&=\varphi_{s+t}(m+n-1)c_{i}^{s-1,t}(m,n),\\
c_{i}^{s,t}(m,n)(\id{}\otimes\varphi_{t}(n))&=\varphi_{s+t}(m+n-1)c_{i}^{s,t-1}(m,n),
\end{align*}
and given planted planar trees $T$ and $T'$ with leaves concentrated in even levels, $\card L(T)=m$, $\card L(T')=n$, $\card I^{e}(T)=s$, and $\card I^{e}(T')=t$, 
if
$u'\in I^o(T')$ is the unique level $1$ vertex, $u\in I^{o}(T)$ belongs to the $i^{\text{th}}$ leaf edge (with respect to the path order),  the $i^{\text{th}}$ leaf edge occupies the  $k^{\text{th}}$ place among all incomming edges of $u$, and $e=\{u,u'\}\in E(T\circ_{i} T')$, then the the morphism
$c_{i}^{s,t}(m,n)(\bar{\psi}_{s}^{T}\otimes \bar{\psi}_{t}^{T'})$ coincides with the following morphism, that we call $d^{s,t}_{i}(T,T')$,
$$\xy
(0,0)*{\bigotimes\limits_{v\in I^{e}(T)}\hspace{-8pt} V(\val{T}{v})\otimes
\hspace{-7pt}\bigotimes\limits_{w\in I^o(T)}\hspace{-7pt}\mathcal{O}(\val{T}{w})
\otimes\hspace{-10pt} 
\bigotimes\limits_{v'\in I^{e}(T')}\hspace{-10pt} V(\val{T'}{v'})\otimes
\hspace{-10pt}\bigotimes\limits_{w'\in I^o(T')}\hspace{-10pt}\mathcal{O}(\val{T'}{w'})},
(-1,-20)*{
\mathcal{O}(\val{T}{u})\otimes \mathcal{O}(\val{T'}{u'})
\;\;\;\;\otimes\hspace{-10pt}
\bigotimes\limits_{v\in I^{e}(T)\cup I^{e}(T')}\hspace{-20pt} V(\val{}{v})\;\;\;\;\;\;\;\otimes
\hspace{-25pt}\bigotimes\limits_{w\in (I^o(T)\setminus\{u\})\cup (I^o(T')\setminus\{u'\})}\hspace{-45pt}\mathcal{O}(\val{}{w})
},
(-1,-41)*{
\mathcal{O}(\underbrace{\val{T}{u}+\val{T'}{u'}-1}_{=\;\val{(T\circ_{i}T')/e}{[e]}})
\;\;\;\otimes\hspace{-15pt}
\bigotimes\limits_{v\in I^{e}((T\circ_{i}T')/e)}\hspace{-22pt} V(\val{}{v})\;\;\;\;\;\otimes
\hspace{-15pt}\bigotimes\limits_{w\in I^o((T\circ_{i}T')/e)\setminus\{[e]\}}\hspace{-33pt}\mathcal{O}(\val{}{w})
},
(0,-60)*{P_{s+t}(m+n-1)}
\ar(0,-5);(0,-15)_{\cong}^{\text{symmetry}}
\ar(0,-25);(0,-35)_{\circ_{k}\otimes\id{}}
\ar(0,-45);(0,-57)_{\bar{\psi}_{s+t}^{(T\circ_i T')/e}}
\endxy$$
Here we use the convention that $\bar{\psi}_{0}^{T}=\id{\mathcal{O}(m)}$ and  $\bar{\psi}_{0}^{T'}=\id{\mathcal{O}(n)}$.
\end{lem}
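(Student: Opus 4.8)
The plan is to construct the $c_i^{s,t}(m,n)$ by induction on $s+t$, starting from $c_i^{0,0}=\circ_i$. The structural input is Lemma \ref{pind}: each $\varphi_s(m)\colon P_{s-1}(m)\r P_s(m)$ exhibits $P_s(m)$ as the pushout of the cells $\coprod_T\bigotimes_{v\in I^e(T)}V(\val{T}{v})\otimes\bigotimes_{w\in I^o(T)}\mathcal{O}(\val{T}{w})$ along $(\psi_s^T)_T$. Since $\C V$ is biclosed, the functors $-\otimes P_t(n)$ and $P_s(m)\otimes-$ preserve these pushouts, so iterating the presentation in both tensor factors exhibits $P_s(m)\otimes P_t(n)$ as a colimit whose cone is the family
$$\varphi_s(m)\otimes\id{},\qquad \id{}\otimes\varphi_t(n),\qquad \bar\psi_s^T\otimes\bar\psi_t^{T'}.$$
These three maps jointly cover, because $\varphi_s(m)\otimes\id{}$ accounts for every product whose first factor comes from $P_{s-1}(m)$, $\id{}\otimes\varphi_t(n)$ for every product whose second factor comes from $P_{t-1}(n)$, and the remaining products of genuinely new cells are exactly the $\bar\psi_s^T\otimes\bar\psi_t^{T'}$. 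Uniqueness of $c_i^{s,t}(m,n)$ follows at once, since the three stated conditions prescribe its value on this cone.

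\textbf{Existence.} I would prescribe, via the colimit, the value $\varphi_{s+t}(m+n-1)c_i^{s-1,t}(m,n)$ on the first family, $\varphi_{s+t}(m+n-1)c_i^{s,t-1}(m,n)$ on the second (both available by induction), and $d_i^{s,t}(T,T')$ on the third; it then remains to check these agree on the overlaps of the colimit diagram. The overlap of the first two families is $P_{s-1}(m)\otimes P_{t-1}(n)$, reached through $\varphi_s(m)\otimes\varphi_t(n)$; there both prescriptions reduce to $\varphi_{s+t}\varphi_{s+t-1}c_i^{s-1,t-1}$ upon applying the inductive conditions for $c_i^{s-1,t}$ and $c_i^{s,t-1}$, so they agree formally.

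\textbf{The delicate overlaps.} The remaining overlaps come from the sources $s\bigl(\bigodot_{v\in I^e(T)}f(\val{T}{v})\bigr)$ of the attaching cells. Along each $\kappa_v$, $v\in I^e(T)$, the cell $\bar\psi_s^T$ factors through $\varphi_s(m)$ via $\psi_s^T$, so I must verify that $d_i^{s,t}(T,T')$ precomposed with $(\kappa_v\otimes\id{})\otimes\bar\psi_t^{T'}$ agrees with $\varphi_{s+t}(m+n-1)c_i^{s-1,t}(m,n)$ precomposed with $\psi_s^T\otimes\bar\psi_t^{T'}$, together with the symmetric statement on the $T'$-side. Unwinding $d_i^{s,t}(T,T')$ and the recursive description of $\bar\psi^{\bullet}$ in Lemma \ref{pind}, both sides reduce to one and the same attachment of the tree obtained from $T\circ_i T'$ by contracting \emph{both} the grafting edge $e=\{u,u'\}$ and the star $\star(v)$; this is legitimate precisely because $v\in I^e$ forces $v\neq u,u'$, so the two contractions are disjoint.

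\textbf{Main obstacle.} The hard part will be exactly this last reduction: it requires that contracting $\star(v)$ and contracting $e$ commute on $T\circ_i T'$, and that the corresponding two ways of composing the operations $\circ$ in $\mathcal{O}$ (one from the multiplication at $e$, the other from the attachment at $v$) coincide. This is a coherence between the operadic associativity of $\mathcal{O}$, in the form of relations (1)--(2) of Remark \ref{circi}, and the associativity of grafting together with the identifications $I^e(T)\setminus\{v\}=I^e(T/\star(v))$ of Section \ref{ropo}; it is of the same nature as the compatibility verified in the proof of Lemma \ref{pind} and can be organized by the same case analysis according to the relative position of the vertices involved. Once all overlap checks are in place, the universal property of the colimit produces $c_i^{s,t}(m,n)$ with the three required properties and closes the induction.
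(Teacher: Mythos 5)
Your skeleton coincides with the paper's proof: induction on $(s,t)$, presentation of $P_s(m)\otimes P_t(n)$ as a pushout via Lemmas \ref{podot} and \ref{pind}, prescription of the three values $\varphi_{s+t}c_i^{s-1,t}$, $\varphi_{s+t}c_i^{s,t-1}$, $d_i^{s,t}(T,T')$, and verification of agreement along the attaching maps $\kappa_v$ (your ``delicate overlaps'' are exactly the paper's compatibility conditions (a) and (b), and your formal check on $P_{s-1}(m)\otimes P_{t-1}(n)$ is fine). The gap is in the one place where the real work happens. You justify the key reduction by claiming that contracting $\star(v)$ and contracting $e=\{u,u'\}$ are disjoint operations ``precisely because $v\in I^e$ forces $v\neq u,u'$.'' That inference is false: $v\neq u,u'$ only gives $e\notin\star(v)$, i.e.\ edge-disjointness, not disjointness of the subcomplexes. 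Whenever $u\in\link(v)$, i.e.\ $\{v,u\}\in E(T)$ (a perfectly common configuration: the even vertex $v$ being attached can sit adjacent to the odd vertex $u$ carrying the $i^{\text{th}}$ leaf), the star $\star(v)$ and the edge $e$ share the vertex $u$, their union is connected, and the two contractions do not commute in your sense. Concretely, one side of the overlap condition applies $\circ_k$ at $e$ first and then the multiplication $\mathcal{O}(p_{\overline{\star}(v)})$ computed in $(T\circ_iT')/e$, where the merged vertex $[e]$ has valence $\val{T}{u}+\val{T'}{u'}-1$; the other side applies $\mathcal{O}(p_{\overline{\star}(v)})$ computed in $T$ (input valence $\val{T}{u}$) and then a composition at the image of $e$. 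Identifying these two composites is precisely an instance of relation (2) of Remark \ref{circi}, and the paper carries it out by introducing the auxiliary trees $T''=\overline{\star}(v)\circ_l C_{\val{T'}{u'}}$ (with $e$ the $l^{\text{th}}$ leaf of $\overline{\star}(v)$) and $T'''=\star(v)\cup e$ collapsed to a single vertex, showing both sides equal the attachment $\bar\psi_{s+t-1}^{(T\circ_iT')/T'''}$. In the genuinely disjoint case $\{v,u\}\notin E(T)$ no operadic relation is needed at all, only the recursive definitions --- which shows disjointness cannot be the reason the reduction works in general.

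Your ``Main obstacle'' paragraph half-recognizes this, since you invoke relations (1)--(2) and a case analysis by relative position of vertices; but it contradicts, rather than repairs, the disjointness argument that precedes it (if the contractions were always disjoint, no associativity relation would be required), and it stops at the assertion that the coherence ``can be organized by the same case analysis'' without performing it. To close the gap you must split the overlap check into the cases $\{v,u\}\notin E(T)$ and $\{v,u\}\in E(T)$, and in the second case write out the composite through $\mathcal{O}(T'')$ and verify, using relation (2) of Remark \ref{circi} for $\mathcal{O}$, that both sides of the compatibility condition coincide with the single attachment of the tree obtained by collapsing the connected subcomplex $\star(v)\cup e$. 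That verification is the substance of the lemma; as written, your proposal assumes its conclusion.
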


\begin{proof}
The map $c_{i}^{s,t}(m,n)$ is defined from $c_{i}^{s-1,t}(m,n)$, $c_{i}^{s,t-1}(m,n)$ and $d_{i}^{s,t}(T,T')$ by using the universal property of the push-out definition of
$P_{s}(m)\otimes P_{t}(n)$ arising from Lemmas \ref{podot} and \ref{pind}, by induction on $(s,t)\in \mathbb{N}\times \mathbb{N}$, $\mathbb{N}=\{0,1,2\dots\}$, with respect to the graded lexicographic order, 
$$(s,t)\leq (s',t')\Leftrightarrow\left\{
\begin{array}{l}
\text{either }s+t<s'+t',\\
\text{or }s+t = s'+t'\text{ and } s\leq s'. 
\end{array}
\right.$$

There is nothing to check for the first three elements $(0,0)$, $(0,1)$, $(1,0)$. Assume that everything holds up to the predecessor of $(s,t)$ with $s+t>1$. We have to show that, for any $x\in I^e(T)$ and $x'\in I^e(T')$, the following compatibility conditions hold:
\begin{align*}
\tag{a} d^{s,t}_i(T,T')(f(\val{T}{x})\otimes\id{})&=\varphi_{s+t}(m+n-1)c_{i}^{s-1,t}(m,n)(\psi_{s,x}^T\otimes\bar{\psi}_t^{T'}),\\
\tag{b}
d^{s,t}_i(T,T')(f(\val{T'}{x'})\otimes\id{})&=\varphi_{s+t}(m+n-1)c_{i}^{s,t-1}(m,n)(\bar{\psi}_s^T\otimes \psi_{t,x'}^{T'}).
\end{align*}
Since (a) and (b) are very similar to each other, we here just check (a). 
We must distinguish two cases: $\{x,u\}\in E(T)$ and $\{x,u\}\notin E(T)$, see Figure \ref{xu1}.

\begin{figure}
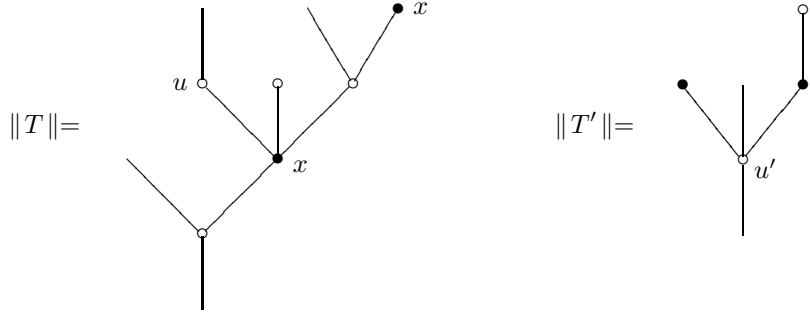

$$
\norm{T}=\quad\begin{array}{c}
\xy
(0,0)*{}="a",
(0,10)*-<2pt>{\circ}="b",
(-10,20)*{}="c",
(10,20)*-{\bullet}="d",
(13,19)*{x},
(29,40)*{x},
(0,30)*-<1pt>{\circ}="e",
(-3,30)*{u},
(10,30)*-<2pt>{\circ}="f",
(20,30)*-<2pt>{\circ}="g",
(-8,40)*-{}="h",
(0,40)*{}="i",
(8,40)*-{}="j",
(0,50)*{}="k",
(14,40)*{}="m",
(26,40)*-{\bullet}="n",
\ar@{-}"a";"b",
\ar@{-}"b";"c",
\ar@{-}"b";"d",
\ar@{-}"d";"e",
\ar@{-}"d";"f",
\ar@{-}"d";"g",
\ar@{-}"e";"i",
\ar@{-}"g";"m",
\ar@{-}"g";"n",
\endxy
\end{array}\qquad\qquad\norm{T'}=\quad
\begin{array}{c}
\xy
(0,20)*{}="d",
(0,30)*-<1pt>{\circ}="e",
(3,29)*{u'},
(-8,40)*-{\bullet}="h",
(0,40)*{}="i",
(8,40)*-{\bullet}="j",
(0,50)*{}="k",
(8,50)*-<2pt>{\circ}="l",
(14,40)*{}="m",
(26,40)*{}="n",
\ar@{-}"d";"e",
\ar@{-}"e";"h",
\ar@{-}"e";"i",
\ar@{-}"e";"j",
\ar@{-}"j";"l",
\endxy
\end{array}$$
\caption{For the trees $T$ and $T'$ and $i=2$ we depict $u$, $u'$ and two possible choices of $x$, one with $\{x,u\}\in E(T)$ and the other one with $\{x,u\}\notin E(T)$.}
\label{xu1}
\end{figure}

Suppose $\{x,u\}\notin E(T)$. Then $u\notin \link(x)$. Using the definition of $d_i^{s,t}(T,T')$ in the statement of this lemma and the definition of $\bar{\psi}_{s+t}^{(T\circ_iT')/e}$ in Lemma \ref{pind} we deduce that, in this case, the left hand side of (a) is the following composite morphism, see~Figure~\ref{xu11},
\begin{equation}\label{ultima3}
\xy
(0,0)*{U(\val{T}{x})\otimes\hspace{-17pt}\bigotimes\limits_{v\in I^{e}(T)\setminus\{x\}}\hspace{-17pt} V(\val{T}{v})\otimes
\hspace{-9pt}\bigotimes\limits_{w\in I^o(T)}\hspace{-9pt}\mathcal{O}(\val{T}{w})
\otimes\hspace{-12pt} 
\bigotimes\limits_{v'\in I^{e}(T')}\hspace{-12pt} V(\val{T'}{v'})\otimes
\hspace{-12pt}\bigotimes\limits_{w'\in I^o(T')}\hspace{-12pt}\mathcal{O}(\val{T'}{w'})},
(0,-20)*{
\mathcal{O}(\val{T}{x})\otimes\hspace{-17pt}\bigotimes\limits_{v\in I^{e}(T)\setminus\{x\}}\hspace{-17pt} V(\val{T}{v})\otimes
\hspace{-9pt}\bigotimes\limits_{w\in I^o(T)}\hspace{-9pt}\mathcal{O}(\val{T}{w})
\otimes\hspace{-12pt} 
\bigotimes\limits_{v'\in I^{e}(T')}\hspace{-12pt} V(\val{T'}{v'})\otimes
\hspace{-12pt}\bigotimes\limits_{w'\in I^o(T')}\hspace{-12pt}\mathcal{O}(\val{T'}{w'})
},
(-3,-40)*{
\begin{array}{c}
\hspace{39pt}\mathcal{O}(\overline{\star}(x))\otimes\mathcal{O}(\val{T}{u})\otimes\mathcal{O}(\val{T'}{u'})\vspace{-10pt}\\{}\\
\otimes\;\;\;\;\hspace{-17pt}\bigotimes\limits_{v\in I^{e}(T)\setminus\{x\}}\hspace{-17pt} V(\val{T}{v})\;\;\;\;\otimes
\hspace{-18pt}\bigotimes\limits_{w\in I^o(T)\setminus(\link(x)\cup\{u\})}\hspace{-30pt}\mathcal{O}(\val{T}{w})
\;\;\;\;\otimes\hspace{-0pt} 
\bigotimes\limits_{v'\in I^{e}(T')}\hspace{-10pt} V(\val{T'}{v'})\;\;\;\;\otimes
\hspace{-10pt}\bigotimes\limits_{w'\in I^o(T')\setminus\{u'\}}\hspace{-20pt}\mathcal{O}(\val{T'}{w'})
\end{array}
},
(-3,-67)*{
\begin{array}{c}
\hspace{53pt}\mathcal{O}(r_x)\otimes\mathcal{O}(\val{T}{u}+\val{T'}{u'}-1)\vspace{-10pt}\\{}\\
\otimes\;\;\;\;\hspace{-17pt}\bigotimes\limits_{v\in I^{e}(T)\setminus\{x\}}\hspace{-17pt} V(\val{T}{v})\;\;\;\;\otimes
\hspace{-18pt}\bigotimes\limits_{w\in I^o(T)\setminus(\link(x)\cup\{u\})}\hspace{-30pt}\mathcal{O}(\val{T}{w})
\;\;\;\;\otimes\hspace{-0pt} 
\bigotimes\limits_{v'\in I^{e}(T')}\hspace{-10pt} V(\val{T'}{v'})\;\;\;\;\otimes
\hspace{-10pt}\bigotimes\limits_{w'\in I^o(T')\setminus\{u'\}}\hspace{-20pt}\mathcal{O}(\val{T'}{w'})
\end{array}
},
(-6,-89)*{
\bigotimes\limits_{v\in I^{e}(((T/\star(x))\circ_i T')/e)}\hspace{-30pt} V(\val{((T/\star(x))\circ_i T')/e}{v})\;\;\;\;\;\;\;\;\otimes
\hspace{-15pt}\bigotimes\limits_{w\in I^o(((T/\star(x))\circ_i T')/e)}\hspace{-30pt}\mathcal{O}(\val{((T/\star(x))\circ_i T')/e}{w})
},
(0,-105)*{P_{s+t-1}(m+n-1)},
(0,-120)*{P_{s+t}(m+n-1)}
\ar(0,-2);(0,-15)_{\bar{g}(\val{T}{x})\otimes\id{}}
\ar(0,-22);(0,-32)_{\cong}^{\text{symmetry}}
\ar(0,-46);(0,-59)_{\mathcal{O}(p_{\overline{\star}(x)})\otimes\circ_k  \otimes\id{}}
\ar(0,-73);(0,-85)_{\cong}^{\text{symmetry}}
\ar(0,-93);(0,-103)_{\bar{\psi}_{s+t-1}^{((T/\star(x))\circ_i T')/e}}
\ar(0,-108);(0,-117)_{\varphi_{s+t}(m+n-1)}
\endxy
\end{equation}
Moreover, by induction, since $(s-1,t)<(s,t)$ one can easily check that this is also the right hand side of~(a). 
\begin{figure}
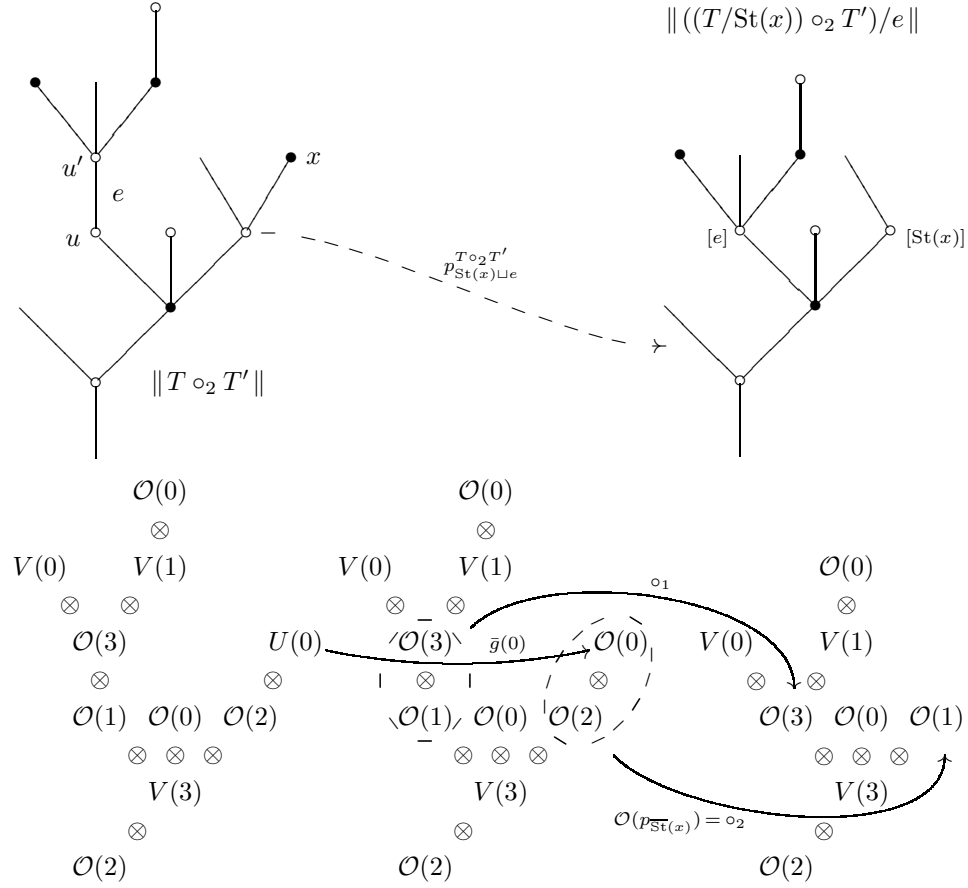

$$
\begin{array}{c}
\xy
(0,0)*{}="a",
(0,10)*-<2pt>{\circ}="b",
(-10,20)*{}="c",
(10,20)*-{\bullet}="d",
(29,40)*{x},
(0,30)*-<1pt>{\circ}="e",
(-3,29)*{u},
(-3,39)*{u'},
(3,35)*{e},
(10,30)*-<2pt>{\circ}="f",
(20,30)*-<2pt>{\circ}="g",
(-8,40)*-{}="h",
(0,40)*-<1pt>{\circ}="i",
(-8,50)*-{\bullet}="hh",
(0,50)*{}="ii",
(8,50)*-{\bullet}="jj",
(8,60)*-<2pt>{\circ}="ll",
(8,40)*-{}="j",
(0,50)*{}="k",
(14,40)*{}="m",
(26,40)*-{\bullet}="n",
(15,10)*{\norm{T\circ_{2}T'}},
\ar@{-}"a";"b",
\ar@{-}"b";"c",
\ar@{-}"b";"d",
\ar@{-}"d";"e",
\ar@{-}"d";"f",
\ar@{-}"d";"g",
\ar@{-}"e";"i",
\ar@{-}"g";"m",
\ar@{-}"g";"n",
\ar@{-}"i";"hh",
\ar@{-}"i";"ii",
\ar@{-}"i";"jj",
\ar@{-}"jj";"ll"
\ar@(r,l)@{-->}(23,30);(75,15)^{p^{T\circ_{2}T'}_{\star(x) \sqcup e}}
\endxy
\end{array}\hspace{-8pt}
\begin{array}{c}
\xy
(0,0)*{}="a",
(0,10)*-<2pt>{\circ}="b",
(-10,20)*{}="c",
(10,20)*-{\bullet}="d",
(0,30)*-<1pt>{\circ}="e",
(-3,29)*{\scriptstyle [e]},
(10,30)*-<2pt>{\circ}="f",
(20,30)*-<2pt>{\circ}="g",
(26,29)*{\scriptstyle [\star(x)]},
(-8,40)*-{}="h",
(0,40)*{}="i",
(-8,40)*-{\bullet}="hh",
(0,40)*{}="ii",
(8,40)*-{\bullet}="jj",
(8,50)*-<2pt>{\circ}="ll",
(8,40)*-{}="j",
(0,50)*{}="k",
(14,40)*{}="m",
(7,58)*{\norm{((T/\star(x))\circ_{2}T')/e}},
\ar@{-}"a";"b",
\ar@{-}"b";"c",
\ar@{-}"b";"d",
\ar@{-}"d";"e",
\ar@{-}"d";"f",
\ar@{-}"d";"g",
\ar@{-}"g";"m",
\ar@{-}"e";"hh",
\ar@{-}"e";"ii",
\ar@{-}"e";"jj",
\ar@{-}"jj";"ll"
\endxy
\end{array}$$
$$\begin{array}{c}
\xy
(0,0)*{}="a",
(0,10)*-<2pt>{\mathcal{O}(2)}="b",
(-10,20)*{}="c",
(10,20)*-{ V(3)}="d",
(0,30)*-<1pt>{\mathcal{O}(1)}="ee",
(0,40)*-<1pt>{\mathcal{O}(3)}="e",
(10,30)*-<2pt>{\mathcal{O}(0)}="f",
(20,30)*-<2pt>{\mathcal{O}(2)}="g",
(-8,50)*-{ V(0)}="h",
(0,50)*{}="i",
(8,50)*-{ V(1)}="j",
(0,60)*{}="k",
(8,60)*-<2pt>{\mathcal{O}(0)}="l",
(14,40)*{}="m",
(26,40)*{U(0)}="n"
\ar@{}"a";"b"
\ar@{}"b";"c"
\ar@{}"b";"d"|{\displaystyle\otimes}
\ar@{}"d";"ee"|{\displaystyle\otimes}
\ar@{}"ee";"e"|{\displaystyle\otimes}
\ar@{}"d";"f"|{\displaystyle\otimes}
\ar@{}"d";"g"|{\displaystyle\otimes}
\ar@{}"e";"h"|{\displaystyle\otimes}
\ar@{}"e";"i"
\ar@{}"e";"j"|{\displaystyle\otimes}
\ar@{}"j";"l"|{\displaystyle\otimes}
\ar@{}"g";"m"
\ar@{}"g";"n"|{\displaystyle\otimes}
\ar@/_5pt/(30,39);(65,39)^<(.7){\bar{g}(0)}
\endxy
\end{array}\hspace{-102pt}
\begin{array}{c}
\xy
(0,0)*{}="a",
(0,10)*-<2pt>{\mathcal{O}(2)}="b",
(-10,20)*{}="c",
(10,20)*-{ V(3)}="d",
(0,30)*-<1pt>{\mathcal{O}(1)}="ee",
(0,40)*-<1pt>{\mathcal{O}(3)}="e",
(10,30)*-<2pt>{\mathcal{O}(0)}="f",
(20,30)*-<2pt>{\mathcal{O}(2)}="g",
(-8,50)*-{ V(0)}="h",
(0,50)*{}="i",
(8,50)*-{ V(1)}="j",
(0,60)*{}="k",
(8,60)*-<2pt>{\mathcal{O}(0)}="l",
(14,40)*{}="m",
(26,40)*{\mathcal{O}(0)}="n",
(0,43);(0,35),{\ellipse<,17pt>{--}},
(28,43);(23,35),{\ellipse<,17pt>{--}}
\ar@{}"a";"b"
\ar@{}"b";"c"
\ar@{}"b";"d"|{\displaystyle\otimes}
\ar@{}"d";"ee"|{\displaystyle\otimes}
\ar@{}"ee";"e"|{\displaystyle\otimes}
\ar@{}"d";"f"|{\displaystyle\otimes}
\ar@{}"d";"g"|{\displaystyle\otimes}
\ar@{}"e";"h"|{\displaystyle\otimes}
\ar@{}"e";"i"
\ar@{}"e";"j"|{\displaystyle\otimes}
\ar@{}"j";"l"|{\displaystyle\otimes}
\ar@{}"g";"m"
\ar@{}"g";"n"|{\displaystyle\otimes}
\ar@(ru,u)(6,42);(49,34)^{\circ_{1}}
\ar@(rd,d)(25,25);(69,25)_<(.3){\mathcal{O}(p_{\overline{\star}(x)})\,=\, \circ_{2}\quad}
\endxy
\end{array}\hspace{-100pt}
\begin{array}{c}
\vspace{-13.5pt}
\\
\xy
(0,0)*{}="a",
(0,10)*-<2pt>{\mathcal{O}(2)}="b",
(-10,20)*{}="c",
(10,20)*-{ V(3)}="d",
(0,30)*-<1pt>{\mathcal{O}(3)}="e",
(10,30)*-<2pt>{\mathcal{O}(0)}="f",
(20,30)*-<2pt>{\mathcal{O}(1)}="g",
(-8,40)*-{ V(0)}="h",
(0,40)*{}="i",
(8,40)*-{ V(1)}="j",
(0,62)*{}="k",
(8,50)*-<2pt>{\mathcal{O}(0)}="l",
(14,40)*{}="m",
(26,40)*{}="n"
\ar@{}"a";"b"
\ar@{}"b";"c"
\ar@{}"b";"d"|{\displaystyle\otimes}
\ar@{}"d";"e"|{\displaystyle\otimes}
\ar@{}"d";"f"|{\displaystyle\otimes}
\ar@{}"d";"g"|{\displaystyle\otimes}
\ar@{}"e";"h"|{\displaystyle\otimes}
\ar@{}"e";"i"
\ar@{}"e";"j"|{\displaystyle\otimes}
\ar@{}"j";"l"|{\displaystyle\otimes}
\ar@{}"g";"m"
\ar@{}"g";"n"
\endxy
\end{array}$$
\caption{An illustration of \eqref{ultima3} for $T$ and $T'$ as in Figure \ref{xu1} in case $\{x,u\}\notin E(T)$. 
}
\label{xu11}
\end{figure}

Suppose now that $\{x,u\}\in E(T)$. Then $u\in\link(x)$. 
Assume that $e$ is the $l^{\text{th}}$ leaf of $\overline{\star}(x)$. 
We denote $T''$ the planted planar tree with leaves $T''=\overline{\star}(x)\circ_l C_{\val{T'}{u'}}$. 
The inner part of $T''$ is identified with the subtree $T'''\subset T\circ_i T'$ formed by adjoining the edge $e$ to $\star(x)$, see Figure \ref{xu2}.  
\begin{figure}
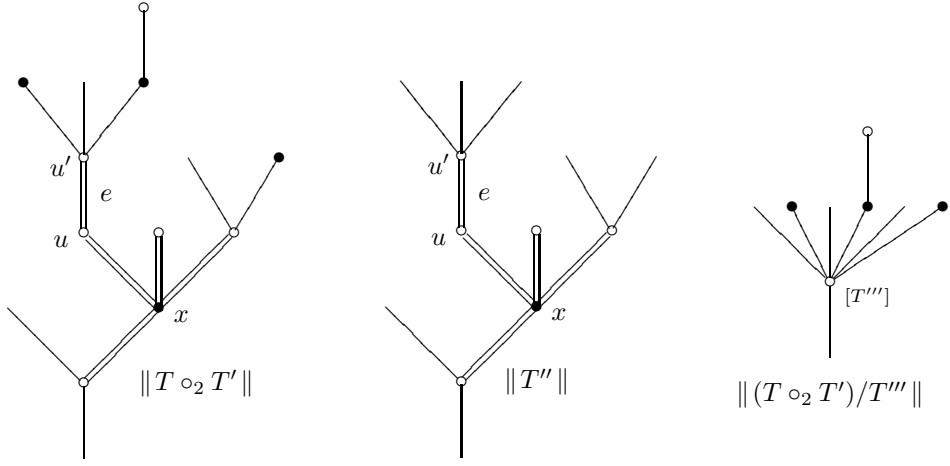

$$
\begin{array}{c}
\xy
(0,0)*{}="a",
(15,10)*{\norm{T\circ_{2}T'}},
(0,10)*-<2pt>{\circ}="b",
(-10,20)*{}="c",
(10,20)*-{\bullet}="d",
(13,19)*{x},
(0,30)*-<1pt>{\circ}="e",
(-3,29)*{u},
(-3,39)*{u'},
(3,35)*{e},
(10,30)*-<2pt>{\circ}="f",
(20,30)*-<2pt>{\circ}="g",
(-8,40)*-{}="h",
(0,40)*-<2pt>{\circ}="i",
(-8,50)*-{\bullet}="hh",
(0,50)*{}="ii",
(8,50)*-{\bullet}="jj",
(8,60)*-<2pt>{\circ}="ll",
(8,40)*-{}="j",
(0,50)*{}="k",
(14,40)*{}="m",
(26,40)*-{\bullet}="n",
\ar@{-}"a";"b",
\ar@{-}"b";"c",
\ar@{=}"b";"d",
\ar@{=}"d";"e",
\ar@{=}"d";"f",
\ar@{=}"d";"g",
\ar@{=}"e";"i",
\ar@{-}"g";"m",
\ar@{-}"g";"n",
\ar@{-}"i";"hh",
\ar@{-}"i";"ii",
\ar@{-}"i";"jj",
\ar@{-}"jj";"ll"
\endxy
\end{array}\qquad
\quad\begin{array}{c}
\xy
(0,0)*{}="a",
(10,10)*{\norm{T''}},
(0,10)*-<2pt>{\circ}="b",
(-10,20)*{}="c",
(10,20)*-{\bullet}="d",
(13,19)*{x},
(0,30)*-<1pt>{\circ}="e",
(-3,29)*{u},
(-3,39)*{u'},
(3,35)*{e},
(10,30)*-<2pt>{\circ}="f",
(20,30)*-<2pt>{\circ}="g",
(-8,40)*-{}="h",
(0,40)*-<2pt>{\circ}="i",
(-8,50)*{}="hh",
(0,50)*{}="ii",
(8,50)*{}="jj",
(8,60)*{}="ll",
(8,40)*-{}="j",
(0,50)*{}="k",
(14,40)*{}="m",
(26,40)*{}="n",
\ar@{-}"a";"b",
\ar@{-}"b";"c",
\ar@{=}"b";"d",
\ar@{=}"d";"e",
\ar@{=}"d";"f",
\ar@{=}"d";"g",
\ar@{=}"e";"i",
\ar@{-}"g";"m",
\ar@{-}"g";"n",
\ar@{-}"i";"hh",
\ar@{-}"i";"ii",
\ar@{-}"i";"jj",
\endxy
\end{array}\qquad
\begin{array}{c}
\xy
(0,0)*{}="a",
(0,-5)*{\norm{(T\circ_{2}T')/T'''}},
(0,10)*-<1pt>{\circ}="b",
(-10,20)*{}="c",
(-5,20)*-{\bullet}="hh",
(0,20)*{}="ii",
(5,20)*-{\bullet}="jj",
(5,30)*-<2pt>{\circ}="ll",
(8,40)*-{}="j",
(0,20)*{}="k",
(10,20)*{}="m",
(15,20)*-{\bullet}="n",
(5,8)*{\scriptstyle [T''']},
\ar@{-}"a";"b",
\ar@{-}"b";"c",
\ar@{-}"b";"m",
\ar@{-}"b";"n",
\ar@{-}"b";"hh",
\ar@{-}"b";"ii",
\ar@{-}"b";"jj",
\ar@{-}"jj";"ll"
\endxy
\end{array}$$
\caption{For the choice of $x$ in Figure \ref{xu1} with $\{x,u\}\in E(T)$  we here depict $T''$. The subtree $T'''$ is indicated with double lines.}
\label{xu2}
\end{figure}
Using the definition of $d_i^{s,t}(T,T')$ in the statement,  the definition of $\bar{\psi}_{s+t}^{(T\circ_iT')/e}$ in Lemma \ref{pind}, and relation (2) in Remark \ref{circi} for $\mathcal{O}$, we deduce that, in this case, the left hand side of (a) is the following composite morphism, see Figure \ref{xu22},
\begin{equation}\label{ultima4}
\xy
(0,0)*{U(\val{T}{x})\otimes\hspace{-17pt}\bigotimes\limits_{v\in I^{e}(T)\setminus\{x\}}\hspace{-17pt} V(\val{T}{v})\otimes
\hspace{-9pt}\bigotimes\limits_{w\in I^o(T)}\hspace{-9pt}\mathcal{O}(\val{T}{w})
\otimes\hspace{-12pt} 
\bigotimes\limits_{v'\in I^{e}(T')}\hspace{-12pt} V(\val{T'}{v'})\otimes
\hspace{-12pt}\bigotimes\limits_{w'\in I^o(T')}\hspace{-12pt}\mathcal{O}(\val{T'}{w'})},
(0,-20)*{
\mathcal{O}(\val{T}{x})\otimes\hspace{-17pt}\bigotimes\limits_{v\in I^{e}(T)\setminus\{x\}}\hspace{-17pt} V(\val{T}{v})\otimes
\hspace{-9pt}\bigotimes\limits_{w\in I^o(T)}\hspace{-9pt}\mathcal{O}(\val{T}{w})
\otimes\hspace{-12pt} 
\bigotimes\limits_{v'\in I^{e}(T')}\hspace{-12pt} V(\val{T'}{v'})\otimes
\hspace{-12pt}\bigotimes\limits_{w'\in I^o(T')}\hspace{-12pt}\mathcal{O}(\val{T'}{w'})
},
(0,-40)*{
\mathcal{O}(T'')
\otimes\hspace{-10pt}\bigotimes\limits_{v\in I^{e}(T)\setminus\{x\}}\hspace{-17pt} V(\val{T}{v})\;\;\;\otimes
\hspace{-15pt}\bigotimes\limits_{w\in I^o(T)\setminus\link(x)}\hspace{-20pt}\mathcal{O}(\val{T}{w})
\;\;\otimes\hspace{-5pt} 
\bigotimes\limits_{v'\in I^{e}(T')}\hspace{-12pt} V(\val{T'}{v'})\;\;\;\otimes
\hspace{-15pt}\bigotimes\limits_{w'\in I^o(T')\setminus\{u'\}}\hspace{-20pt}\mathcal{O}(\val{T'}{w'})
},
(-6,-60)*{
\mathcal{O}(r_x+\val{T'}{u'}-1)
\otimes\hspace{-10pt}\bigotimes\limits_{v\in I^{e}(T)\setminus\{x\}}\hspace{-17pt} V(\val{T}{v})\;\;\;\otimes
\hspace{-15pt}\bigotimes\limits_{w\in I^o(T)\setminus\link(x)}\hspace{-20pt}\mathcal{O}(\val{T}{w})
\;\;\otimes\hspace{-5pt} 
\bigotimes\limits_{v'\in I^{e}(T')}\hspace{-12pt} V(\val{T'}{v'})\;\;\;\otimes
\hspace{-15pt}\bigotimes\limits_{w'\in I^o(T')\setminus\{u'\}}\hspace{-20pt}\mathcal{O}(\val{T'}{w'})
},
(-4,-80)*{
\bigotimes\limits_{v\in I^{e}((T\circ_i T')/T''')}\hspace{-28pt} V(\val{(T\circ_i T')/T'''}{v})\qquad\otimes
\hspace{-8pt}\bigotimes\limits_{w\in I^o((T\circ_i T')/T''')}\hspace{-28pt}\mathcal{O}(\val{(T\circ_i T')/T'''}{w})
},
(0,-100)*{P_{s+t-1}(m+n-1)},
(0,-120)*{P_{s+t}(m+n-1)}
\ar(0,-2);(0,-15)_{\bar{g}(\val{T}{x})\otimes\id{}}
\ar(0,-22);(0,-35)_{\cong}^{\text{symmetry}}
\ar(0,-45);(0,-55)_{\mathcal{O}(p_{T''}) \otimes\id{}}
\ar(0,-65);(0,-75)_{\cong}^{\text{symmetry}}
\ar(0,-84);(0,-97)_{\bar{\psi}_{s+t-1}^{(T\circ_i T')/T'''}}
\ar(0,-103);(0,-117)_{\varphi_{s+t}(m+n-1)}
\endxy
\end{equation}
\begin{figure}
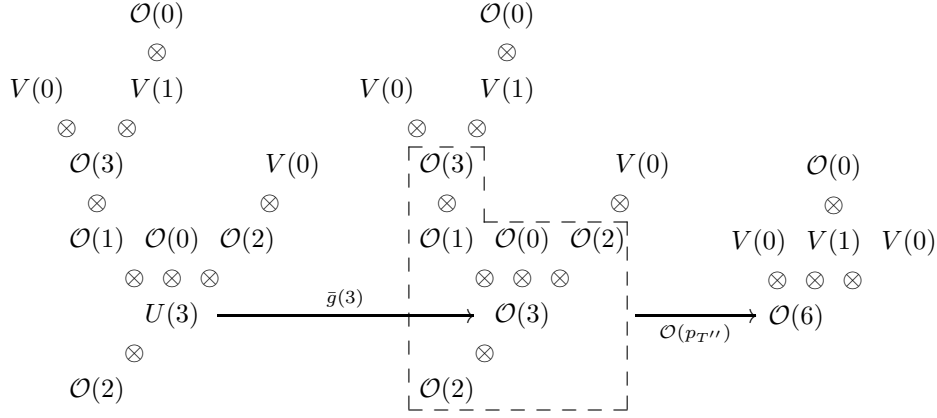

$$\begin{array}{c}
\xy
(0,0)*{}="a",
(0,10)*-<2pt>{\mathcal{O}(2)}="b",
(-10,20)*{}="c",
(10,20)*-{ U(3)}="d",
(0,30)*-<1pt>{\mathcal{O}(1)}="ee",
(0,40)*-<1pt>{\mathcal{O}(3)}="e",
(10,30)*-<2pt>{\mathcal{O}(0)}="f",
(20,30)*-<2pt>{\mathcal{O}(2)}="g",
(-8,50)*-{ V(0)}="h",
(0,50)*{}="i",
(8,50)*-{ V(1)}="j",
(0,60)*{}="k",
(8,60)*-<2pt>{\mathcal{O}(0)}="l",
(14,40)*{}="m",
(26,40)*{V(0)}="n"
\ar@{}"a";"b"
\ar@{}"b";"c"
\ar@{}"b";"d"|{\displaystyle\otimes}
\ar@{}"d";"ee"|{\displaystyle\otimes}
\ar@{}"ee";"e"|{\displaystyle\otimes}
\ar@{}"d";"f"|{\displaystyle\otimes}
\ar@{}"d";"g"|{\displaystyle\otimes}
\ar@{}"e";"h"|{\displaystyle\otimes}
\ar@{}"e";"i"
\ar@{}"e";"j"|{\displaystyle\otimes}
\ar@{}"j";"l"|{\displaystyle\otimes}
\ar@{}"g";"m"
\ar@{}"g";"n"|{\displaystyle\otimes}
\ar(16,20);(50,20)^-{\bar{g}(3)}
\endxy
\end{array}\hspace{-50pt}
\begin{array}{c}
\xy
(0,0)*{}="a",
(0,10)*-<2pt>{\mathcal{O}(2)}="b",
(-10,20)*{}="c",
(10,20)*-{ \mathcal{O}(3)}="d",
(0,30)*-<1pt>{\mathcal{O}(1)}="ee",
(0,40)*-<1pt>{\mathcal{O}(3)}="e",
(10,30)*-<2pt>{\mathcal{O}(0)}="f",
(20,30)*-<2pt>{\mathcal{O}(2)}="g",
(-8,50)*-{ V(0)}="h",
(0,50)*{}="i",
(8,50)*-{ V(1)}="j",
(0,60)*{}="k",
(8,60)*-<2pt>{\mathcal{O}(0)}="l",
(14,40)*{}="m",
(26,40)*{V(0)}="n"
\ar@{}"a";"b"
\ar@{}"b";"c"
\ar@{}"b";"d"|{\displaystyle\otimes}
\ar@{}"d";"ee"|{\displaystyle\otimes}
\ar@{}"ee";"e"|{\displaystyle\otimes}
\ar@{}"d";"f"|{\displaystyle\otimes}
\ar@{}"d";"g"|{\displaystyle\otimes}
\ar@{}"e";"h"|{\displaystyle\otimes}
\ar@{}"e";"i"
\ar@{}"e";"j"|{\displaystyle\otimes}
\ar@{}"j";"l"|{\displaystyle\otimes}
\ar@{}"g";"m"
\ar@{}"g";"n"|{\displaystyle\otimes}
\ar(25,20);(41,20)_-{\mathcal{O}(p_{T''})}
\ar@{--}(-5,7.5);(-5,42.5)
\ar@{--}(-5,7.5);(24,7.5)
\ar@{--}(-5,42.5);(5,42.5)
\ar@{--}(5,42.5);(5,32.5)
\ar@{--}(5,32.5);(24,32.5)
\ar@{--}(24,32.5);(24,7.5)
\endxy
\end{array}\hspace{-23pt}
\begin{array}{c}
\xy
(0,0)*{}="a",
(0,10)*-<1pt>{\mathcal{O}(6)}="b",
(-10,20)*{}="c",
(-5,20)*-{V(0)}="hh",
(0,20)*{}="ii",
(5,20)*-{V(1)}="jj",
(5,30)*-<2pt>{\mathcal{O}(0)}="ll",
(8,43)*-{}="j",
(0,20)*{}="k",
(10,20)*{}="m",
(15,20)*-{V(0)}="n",
\ar@{}"b";"m",
\ar@{}"b";"n",|{\displaystyle\otimes}
\ar@{}"b";"hh",|{\displaystyle\otimes}
\ar@{}"b";"ii",
\ar@{}"b";"jj",|{\displaystyle\otimes}
\ar@{}"jj";"ll"|{\displaystyle\otimes}
\endxy
\end{array}$$
\caption{An illustration of \eqref{ultima4} for $T$ and $T'$ as in Figure \ref{xu1} in case $\{x,u\}\in E(T)$, see Figure \ref{xu2}.}
\label{xu22}
\end{figure}

\noindent Moreover, by induction one can easily check that this is also the right hand side of~(a), hence we are done with this proof.
\end{proof}

Let $\mathcal{P}$ be the sequence defined as
$$\mathcal{P}(n)=\colim\limits_{t\geq 0}P_{t}(n).$$

By the previous lemma, the morphisms $c_{i}^{s,t}(m,n)$ induce composition laws in the colimit,
\begin{equation}\label{compush}
 \circ_{i}\colon \mathcal{P}(m)\otimes \mathcal{P}(n)\To \mathcal{P}(m+n-1),\quad 1\leq i\leq m,\; n\geq0.
\end{equation}

Consider the morphism
\begin{equation}\label{upush}
\xy
(-5,.4)*{\unit},
(17,0)*{\mathcal{O}(1)=P_{0}(1)},
(56,-1)*{\colim\limits_{t\geq 0}P_{t}(1)=\mathcal{P}(1).},
\ar(28,0);(40,0)^-{\text{canonical}}
\ar(-3,0);(6,0)^-{u}
\endxy
\end{equation}

\begin{prop}
The sequence $\mathcal{P}$, the unit \eqref{upush} and the composition laws \eqref{compush} define an operad.
\end{prop}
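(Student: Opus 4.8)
The plan is to verify that $\mathcal{P}$ satisfies the four conditions characterizing an operad given in Remark~\ref{circi}: the two associativity relations (1) and (2) and the two unit relations (3) and (4), where $\circ_i$ are the induced laws \eqref{compush} and the unit is \eqref{upush}. The crucial reduction is that $\otimes$ is biclosed, hence preserves colimits in each variable, so that $\mathcal{P}(l)\otimes\mathcal{P}(m)$, $\mathcal{P}(l)\otimes\mathcal{P}(m)\otimes\mathcal{P}(n)$, and the like are colimits of the corresponding tensor products of the stages $P_s,P_t,\dots$; and each $P_t$ is in turn assembled, through the pushouts of Lemma~\ref{pind}, from the tree-indexed pieces \eqref{monstruo}. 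To check that two morphisms out of such a colimit agree it therefore suffices to check the equality after precomposing with each tree-piece, i.e. on the images of the attaching maps $\bar\psi_s^T$.

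For associativity, first I would fix tree-pieces indexed by planted planar trees with leaves concentrated in even levels $T$, $T'$, $T''$, and unravel the two composites appearing in relations (1) and (2) using the explicit description $c_i^{s,t}(\bar\psi_s^T\otimes\bar\psi_t^{T'})=d_i^{s,t}(T,T')$ from Lemma~\ref{pond}. On each side the composition is computed by grafting the trees, contracting the grafting edges of $(T\circ_iT')\circ_jT''$, and performing the $\circ$-multiplications of $\mathcal{O}$ at the vertices where those edges are contracted. Since grafting of trees is associative, both sides of each relation produce the \emph{same} underlying tree with the same tensor factors; the two resulting attaching maps then differ only by the order in which the local $\circ$-multiplications of $\mathcal{O}$ are applied and by symmetry constraints in $\C{V}$. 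Relation (1) of Remark~\ref{circi} (the case $j<i$, contracting edges incident to disjoint vertices) and relation (2) (the case $i\le j<m+i$, contracting edges along a path) are exactly the identities in $\mathcal{O}$ needed to equate these two attaching maps, and coherence of the associativity and symmetry constraints in $\C{V}$ disposes of the remaining isomorphisms.

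For the unit relations (3) and (4), I would note that the unit \eqref{upush} is the image under the colimit map of $u\colon\unit\r\mathcal{O}(1)=P_0(1)$, attached through the corolla $C_1$. Composing with it via $\circ_1$ (resp. $\circ_i$) amounts, on tree-pieces, to grafting a $C_1$ onto the root edge (resp. a leaf edge) and contracting the resulting inner edge; by the definition of $d_i^{s,t}$ this composite inserts the unit $u$ and then applies a $\circ$-multiplication of $\mathcal{O}$, which by relations (3) and (4) \emph{for} $\mathcal{O}$ is a unit constraint in $\C{V}$. Passing to the colimit yields the required unit constraints for $\mathcal{P}$.

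The main obstacle will be the bookkeeping in the associativity step: for each of the two configurations in relations (1) and (2) one must track precisely which grafting edge is contracted at which vertex and in which order, and then match the resulting iterated $\circ$-multiplications in $\mathcal{O}$ against the identities of Remark~\ref{circi}, all while suppressing the web of symmetry isomorphisms in $\C{V}$. This is conceptually identical to, though notationally heavier than, the compatibility verifications already carried out in Lemmas~\ref{pind} and~\ref{pond}, and I expect it to go through by the same inductive reasoning on the stages $P_t$.
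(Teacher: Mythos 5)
Your proposal is correct and takes essentially the same approach as the paper: reduce the relations of Remark \ref{circi} for $\mathcal{P}$ to the tree-indexed pieces via the pushout/colimit structure of the stages $P_t$ (made precise in the paper by induction on the stages in graded lexicographic order, using the compatibilities of Lemma \ref{pond}), apply the defining identity $c_i^{s,t}(\bar{\psi}_s^T\otimes\bar{\psi}_t^{T'})=d_i^{s,t}(T,T')$, and invoke the corresponding relations for $\mathcal{O}$ on each piece before passing to the colimit. The only minor discrepancy is that in the paper's case analysis the subcase where the two contracted edges are disjoint requires no operad relation for $\mathcal{O}$ at all (only bifunctoriality of $\otimes$ and the definitions), whereas you attribute both subcases to relations (1) and (2); this does not affect correctness.
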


\begin{proof}
We must check that relations (1)--(4) in Remark \ref{circi} hold for $\mathcal{P}$. Each of these relations for $\mathcal{P}$ can be derived from the corresponding relation for $\mathcal{O}$. As relations (1) and (2) 
are very similar to each other, just as (3) and (4), we here check (2) and (3).

In order to prove relation (2) for $\mathcal{P}$ it is enough to check that the following two morphisms $P_r(l)\otimes P_s(m)\otimes P_t(n)\r P_{r+s+t}(l+m+n-2)$ coincide,
$$c^{r+s,t}_j(l+m-1,n)(c^{r,s}_i(l,m)\otimes\id{P_t(n)})=
c^{r,s+t}_i(l,m+n-1)(\id{P_r(l)}\otimes c^{s,t}_{j-i+1}(m,n)).$$
We check this by induction on $(r,s,t)\in\mathbb{N}^3$ with respect to the graded lexicographic order. For $r=s=t=0$ this is just relation (2) for the operad $\mathcal{O}$. If we assume that the relation holds up to the predecessor of $(r,s,t)$, then by using the universal property of the push-out definition of
$P_r(m)\otimes P_s(n)\otimes P_t(p)$ arising from Lemmas \ref{podot} and \ref{pind}, we only have to check that, with the notation of Lemma \ref{pond}, given planted planar trees with leaves concentrated in even levels $T, T', T''$ with $\card L(T)=l$, $\card L(T')=m$, $\card L(T'')=n$, $\card I^e(T)=r$, $\card I^e(T')=s$, and $\card I^e(T'')=t$, then
\begin{equation*}\tag{a}
c^{r+s,t}_j(l+m-1,n)(d_i^{r,s}(T,T')\otimes\bar{\psi}^{T''}_t)=
c^{r,s+t}_i(l,m+n-1)(\bar{\psi}^{T}_r\otimes d^{s,t}_{j-i+1}(T',T'')).
\end{equation*}

Let $u\in I^o(T)$ be the inner vertex of the $i^{\text{th}}$ leaf edge of $T$, $u'_1\in I^o(T')$ the unique level $1$ vertex of $T'$, $u'_2\in I^o(T')$ the inner vertex of the $(j-i+1)^{\text{th}}$ leaf edge of $T'$, and $u''\in I^o(T'')$ the unique level $1$ vertex of $T''$. Suppose that the $i^{\text{th}}$ leaf edge of $T$ is the $k_1^{\text{th}}$ incomming edge of $u$, and that the $(j-i+1)^{\text{th}}$ leaf edge of $T'$ is the $k_2^{th}$ incomming edge of $u_2'$. The most complicated case is when $u_1'=u_2'$, and even this case is easy, although somewhat tedious. 

\begin{figure}
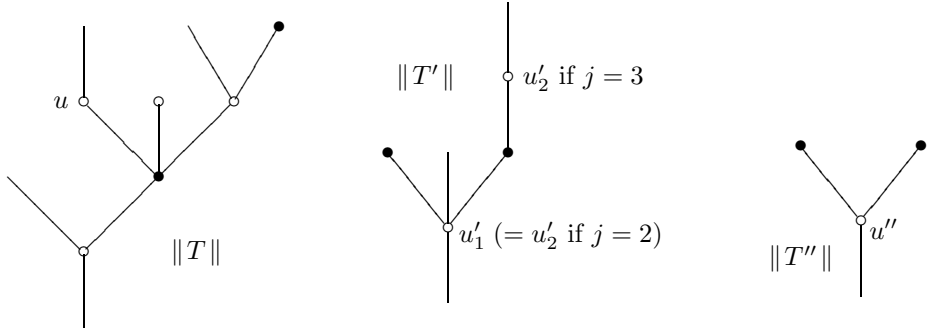

$$
\begin{array}{c}
\xy
(15,10)*{\norm{T}},
(0,0)*{}="a",
(0,10)*-<2pt>{\circ}="b",
(-10,20)*{}="c",
(10,20)*-{\bullet}="d",
(0,30)*-<1pt>{\circ}="e",
(-3,30)*{u},
(10,30)*-<2pt>{\circ}="f",
(20,30)*-<2pt>{\circ}="g",
(-8,40)*-{}="h",
(0,40)*{}="i",
(8,40)*-{}="j",
(0,50)*{}="k",
(14,40)*{}="m",
(26,40)*-{\bullet}="n",
\ar@{-}"a";"b",
\ar@{-}"b";"c",
\ar@{-}"b";"d",
\ar@{-}"d";"e",
\ar@{-}"d";"f",
\ar@{-}"d";"g",
\ar@{-}"e";"i",
\ar@{-}"g";"m",
\ar@{-}"g";"n",
\endxy
\end{array}\qquad \quad
\begin{array}{ll}
\xy
(-3,50)*{\norm{T'}},
(0,20)*{}="d",
(0,30)*-<1pt>{\circ}="e",
(15,29)*{u'_{1}\;(=u'_{2}\text{ if }j=2)},
(18,50)*{u'_{2}\text{ if }j=3},
(-8,40)*-{\bullet}="h",
(0,40)*{}="i",
(8,40)*-{\bullet}="j",
(0,50)*{}="k",
(8,50)*-<1pt>{\circ}="l",
(8,60)*{}="ll",
(14,40)*{}="m",
(26,40)*{}="n",
\ar@{-}"d";"e",
\ar@{-}"e";"h",
\ar@{-}"e";"i",
\ar@{-}"e";"j",
\ar@{-}"j";"l",
\ar@{-}"l";"ll",
\endxy\vspace{-10pt}
&\qquad\quad
\xy
(-8,25)*{\norm{T''}},
(0,20)*{}="d",
(0,30)*-<1pt>{\circ}="e",
(3,29)*{u''},
(-8,40)*-{\bullet}="h",
(8,40)*-{\bullet}="j",
(0,50)*{}="k",
(14,40)*{}="m",
(26,40)*{}="n",
\ar@{-}"d";"e",
\ar@{-}"e";"h",
\ar@{-}"e";"j",
\endxy
\end{array}$$
\caption{For the planted planar trees with leaves $T$, $T'$ and $T''$ we depict $u$, $u'_{1}$, $u'_{2}$ and $u''$ for $i=2$ and $j=2,3$.}
\label{uprima12}
\end{figure}

Assume $u_1'=u_2'$ and denote this vertex simply by $u'$. Notice that $(T\circ_iT')\circ_jT''
= T\circ_i(T'\circ_{j-i+1}T'')$, compare Figure \ref{asoc2}. Let $K\subset (T\circ_iT')\circ_jT''$ be the subtree with  $V(K)=\{u,u',u''\}$ and $E(K)=\{\{u,u'\},\{u',u''\}\}$, see Figure \ref{yoqueseya}. Then by Lemma \ref{pond} and relation (2) for $\mathcal{O}$, both sides of (a) coincide with
$$\xy
(0,0)*{
\bigotimes\limits_{v\in I^{e}(T)}\hspace{-8pt} V(\val{T}{v})\;\otimes
\hspace{-7pt}\bigotimes\limits_{w\in I^o(T)}\hspace{-7pt}\mathcal{O}(\val{T}{w})
\;\otimes 
\hspace{-7pt} 
\bigotimes\limits_{v'\in I^{e}(T')}\hspace{-10pt} V(\val{T'}{v'})\,\,\,\otimes
\hspace{-7pt}\bigotimes\limits_{w'\in I^o(T')}\hspace{-10pt}\mathcal{O}(\val{T'}{w'}) 
\;\otimes 
\hspace{-8pt} 
\bigotimes\limits_{v''\in I^{e}(T'')}\hspace{-12pt} V(\val{T''}{v''})\;\otimes
\hspace{-12pt}\bigotimes\limits_{w''\in I^o(T'')}\hspace{-12pt}\mathcal{O}(\val{T''}{w''})
},
(-5,-20)*{
\mathcal{O}(\val{T}{u})\otimes \mathcal{O}(\val{T'}{u'})\otimes \mathcal{O}(\val{T''}{u''})
\qquad\otimes
\hspace{-20pt}
\bigotimes\limits_{v\in I^{e}(T)\cup I^{e}(T')\cup I^{e}(T'')}\hspace{-35pt} V(\val{}{v})
\qquad\qquad\otimes
\hspace{-27pt}\bigotimes\limits_{w\in (I^o(T)\cup I^o(T')\cup I^o(T''))\setminus\{u,u',u''\}}\hspace{-57pt}\mathcal{O}(\val{}{w})
},
(-5,-40)*{
\mathcal{O}(\val{T}{u}+\val{T'}{u'}+\val{T''}{u''}-2)\\
\qquad\otimes
\hspace{-15pt}
\bigotimes\limits_{v\in I^e((T\circ_i T')\circ_jT'')/K}\hspace{-35pt} V(\val{}{v})\qquad\quad\otimes
\hspace{-23pt}\bigotimes\limits_{w\in I^o(((T\circ_i T')\circ_jT'')/K)\setminus\{[K]\}}\hspace{-53pt}\mathcal{O}(\val{}{w})
},
(0,-60)*{P_{s+t}(m+n+p-2)}
\ar(0,-5);(0,-15)_{\cong}^{\text{symmetry}}
\ar(0,-25);(0,-35)_{(\circ_{k_1}(\id{}\otimes\circ_{k_2}))\otimes\id{}}
\ar(0,-45);(0,-57)_{\bar{\psi}_{r+s+t}^{((T\circ_i T')\circ_jT'')/K}}
\endxy$$

\begin{figure}
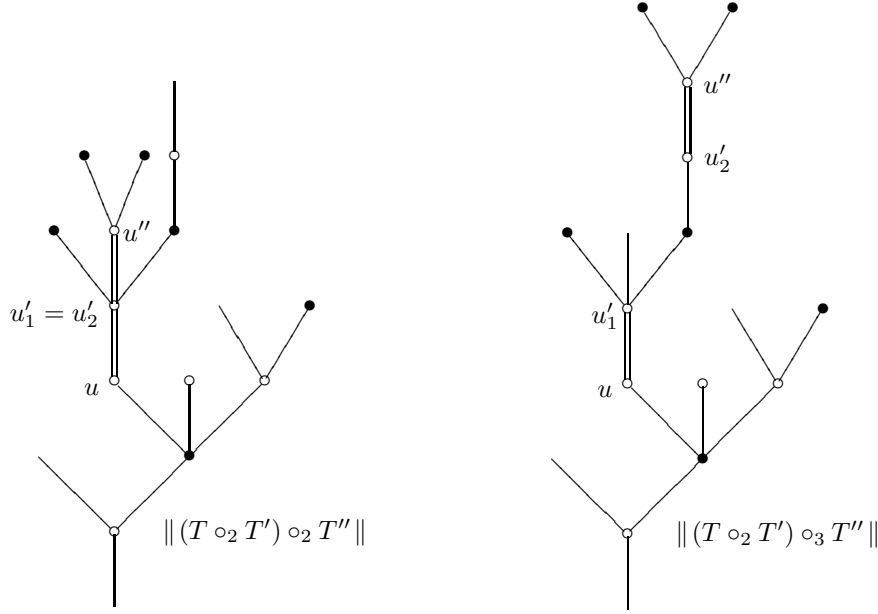

$$
\begin{array}{c}
\xy
(0,0)*{}="a",
(20,10)*{\norm{(T\circ_{2}T')\circ_{2}T''}},
(0,10)*-<2pt>{\circ}="b",
(-10,20)*{}="c",
(10,20)*-{\bullet}="d",
(0,30)*-<1pt>{\circ}="e",
(-3,29)*{u},
(3,50)*{u''},
(-8,39)*{u'_{1}=u'_{2}},
(10,30)*-<2pt>{\circ}="f",
(20,30)*-<2pt>{\circ}="g",
(-8,40)*-{}="h",
(0,40)*-<2pt>{\circ}="i",
(-8,50)*-{\bullet}="hh",
(0,50)*{}="ii",
(8,50)*-{\bullet}="jj",
(8,60)*-<1pt>{\circ}="ll",
(0,50)*-<1pt>{\circ}="lll",
(8,70)*{}="llll",
(-4,60)*-<1pt>{\bullet}="r",
(4,60)*-<1pt>{\bullet}="s",
(8,40)*-{}="j",
(0,50)*{}="k",
(14,40)*{}="m",
(26,40)*-{\bullet}="n",
(14,80)*{}="t",
\ar@{-}"a";"b",
\ar@{-}"b";"c",
\ar@{-}"b";"d",
\ar@{-}"d";"e",
\ar@{-}"d";"f",
\ar@{-}"d";"g",
\ar@{=}"e";"i",
\ar@{-}"g";"m",
\ar@{-}"g";"n",
\ar@{-}"i";"hh",
\ar@{-}"i";"jj",
\ar@{-}"jj";"ll",
\ar@{=}"i";"lll",
\ar@{-}"ll";"llll",
\ar@{-}"lll";"r",
\ar@{-}"lll";"s"
\endxy
\end{array}\qquad\qquad\qquad
\begin{array}{c}
\xy
(0,0)*{}="a",
(20,10)*{\norm{(T\circ_{2}T')\circ_{3}T''}},
(0,10)*-<2pt>{\circ}="b",
(-10,20)*{}="c",
(10,20)*-{\bullet}="d",
(0,30)*-<1pt>{\circ}="e",
(-3,29)*{u},
(-3,39)*{u'_{1}},
(12,60)*{u'_{2}},
(12,70)*{u''},
(10,30)*-<2pt>{\circ}="f",
(20,30)*-<2pt>{\circ}="g",
(-8,40)*-{}="h",
(0,40)*-<1pt>{\circ}="i",
(-8,50)*-{\bullet}="hh",
(0,50)*{}="ii",
(8,50)*-{\bullet}="jj",
(8,60)*-<1pt>{\circ}="ll",
(8,70)*-<1pt>{\circ}="lll",
(2,80)*-<1pt>{\bullet}="r",
(14,80)*-<1pt>{\bullet}="s",
(8,40)*-{}="j",
(0,50)*{}="k",
(14,40)*{}="m",
(26,40)*-{\bullet}="n",
\ar@{-}"a";"b",
\ar@{-}"b";"c",
\ar@{-}"b";"d",
\ar@{-}"d";"e",
\ar@{-}"d";"f",
\ar@{-}"d";"g",
\ar@{=}"e";"i",
\ar@{-}"g";"m",
\ar@{-}"g";"n",
\ar@{-}"i";"hh",
\ar@{-}"i";"ii",
\ar@{-}"i";"jj",
\ar@{-}"jj";"ll",
\ar@{=}"ll";"lll",
\ar@{-}"lll";"r",
\ar@{-}"lll";"s"
\endxy
\end{array}$$
\caption{Here we depict the subtree $K\subset (T\circ_{i}T')\circ_{j}T'' $ in double lines for the trees in Figure \ref{uprima12}, $i=2$ and $j=2,3$. }
\label{yoqueseya}
\end{figure}

Assume now that $u_1'\neq u_2'$. In this case it is not even necessary to use any of the relations in Remark \ref{circi} for~$\mathcal{O}$.  Actually, by Lemma \ref{pond}, if $K\subset (T\circ_i T')\circ_j T''$ is the (disjoint) union of the edges $e_1=\{u,u_1'\}$ and $e_2=\{u_2',u''\}$, see Figure \ref{yoqueseya}, then both sides of (a) coincide with
$$\xy
(0,0)*{
\bigotimes\limits_{v\in I^{e}(T)}\hspace{-8pt} V(\val{T}{v})\;\otimes
\hspace{-7pt}\bigotimes\limits_{w\in I^o(T)}\hspace{-7pt}\mathcal{O}(\val{T}{w})
\;\otimes 
\hspace{-7pt} 
\bigotimes\limits_{v'\in I^{e}(T')}\hspace{-10pt} V(\val{T'}{v'})\,\,\,\otimes
\hspace{-7pt}\bigotimes\limits_{w'\in I^o(T')}\hspace{-10pt}\mathcal{O}(\val{T'}{w'}) 
\;\otimes 
\hspace{-8pt} 
\bigotimes\limits_{v''\in I^{e}(T'')}\hspace{-12pt} V(\val{T''}{v''})\;\otimes
\hspace{-12pt}\bigotimes\limits_{w''\in I^o(T'')}\hspace{-12pt}\mathcal{O}(\val{T''}{w''})
},
(-8,-20)*{
\mathcal{O}(\val{T}{u})\otimes \mathcal{O}(\val{T'}{u'_1})\otimes \mathcal{O}(\val{T'}{u'_2})\otimes \mathcal{O}(\val{T''}{u''})
\otimes
\hspace{-35pt}
\bigotimes\limits_{v\in I^{e}(T)\cup I^{e}(T')\cup I^{e}(T'')}\hspace{-35pt} V(\val{}{v})
\qquad\qquad\otimes
\hspace{-30pt}\bigotimes\limits_{w\in (I^o(T)\cup I^o(T')\cup I^o(T''))\setminus\{u,u'_1,u'_2,u''\}}\hspace{-65pt}\mathcal{O}(\val{}{w})
},
(-8,-40)*{
\mathcal{O}(\val{T}{u}+\val{T'}{u'_1}-1)
\otimes \mathcal{O}(\val{T'}{u'_2}+\val{T''}{u''}-1)
\otimes
\hspace{-35pt}
\bigotimes\limits_{v\in I^e((T\circ_i T')\circ_jT'')/K}\hspace{-35pt} V(\val{}{v})\qquad\qquad\otimes
\hspace{-25pt}\bigotimes\limits_{w\in I^o(((T\circ_i T')\circ_jT'')/K)\setminus\{[e_1],[e_2]\}}\hspace{-60pt}\mathcal{O}(\val{}{w})
},
(0,-60)*{P_{s+t}(m+n+p-2)}
\ar(0,-5);(0,-15)_{\cong}^{\text{symmetry}}
\ar(0,-25);(0,-35)_{\circ_{k_1}\otimes \circ_{k_2}\otimes\id{}}
\ar(0,-45);(0,-57)_{\bar{\psi}_{r+s+t}^{((T\circ_i T')\circ_jT'')/K}}
\endxy$$

Relation (3) is a consequence of the fact that the following composite morphism is a right unit constraint in $\C{V}$,
$$\xymatrix@C=35pt{P_r(l)\otimes\unit\ar[r]^-{\id{}\otimes u}& P_r(l)\otimes \mathcal{O}(1)=P_r(l)\otimes P_0(1)\ar[r]^-{c_i^{r,0}(l,1)}& P_r(l).}$$
This follows by induction on $r$. For $r=0$ this is just relation (3) for $\mathcal{O}$. Assume this holds up to $r-1$. By Lemma~\ref{pind} and the induction hypothesis, we only have to check that 
the morphism $c^{r,0}_{i}(l,1)(\bar{\psi}_{r}^{T}\otimes u)$ coincides with the composition of the right unit isomorphism and $\bar{\psi}_{r}^{T}$. By Lemma \ref{pond}, $$c^{r,0}_{i}(l,1)(\bar{\psi}_{r}^{T}\otimes \id{\mathcal{O}(1)})=d^{r,0}_{i}(T,C_{1}).$$
Let 
$u'\in I^{o}(C_{1})$ be now the unique inner vertex of $C_{1}$, and $e=\{u,u'\}\in E(T\circ_{i}C_{1})$. In this case $(T\circ_{i}C_{1})/e=T$. Moreover, by the very definition of $d^{r,0}_{i}(T,C_{1})$ in the statement of Lemma \ref{pond} and by relation (3) for $\mathcal{O}$, the morphism $d^{r,0}_{i}(T,C_{1})(\id{}\otimes u)$ is the composition of the right unit isomorphism and $\bar{\psi}_{r}^{T}$, hence we are done. 
\end{proof}

Consider the morphisms of sequences $f'\colon\mathcal{O}\r\mathcal{P}$ and $\bar{g}'\colon V\r\mathcal{P}$ defined as
$$\xy
(10,0)*{f'(n)\colon \mathcal{O}(n)=P_{0}(n)},
(58,-1)*{\colim\limits_{t\geq 0}P_{t}(n)=\mathcal{P}(n),},
\ar(28,0);(40,0)^-{\text{canonical}}
\endxy$$
$$\xy
(-17,20)*{ V(n)\otimes \unit^{\otimes (n+1)}\cong V(n)},
(-16,0)*{V(n)\otimes \mathcal{O}(1)^{\otimes (n+1)}},
(25,0)*{P_1(n)},
(60,-1.2)*{\colim\limits_{t\geq 0} P_{t}(n)=\mathcal{P}(n)},
\ar(-25,17);(-25,3)^{\id{}\otimes u^{\otimes (n+1)}}
\ar(0,0);(20,0)^{\bar{\psi}_1^{C_1(C_n(C_1,\dots, C_1))}}
\ar(30,0);(44,0)^-{\text{canonical}}
\ar@/^12pt/(2,20);(70,3)^{\bar{g}'(n)}
\endxy$$

\begin{figure}
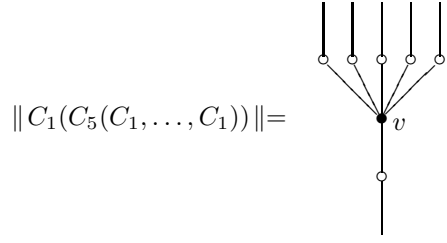

$$\xy/r2.2pt/:
(-40,30)*{\norm{C_{1}(C_{5}(C_{1},\dots,C_{1}))}=},
(0,20)*-<1pt>{\circ}="d",
(3,29)*{v},
(0,10)*{}="c",
(0,30)*-<1pt>{\bullet}="e",
(-5,40)*-<1pt>{\circ}="h",
(-10,40)*-<1pt>{\circ}="h1",
(10,40)*-<1pt>{\circ}="h5",
(0,40)*-<1pt>{\circ}="i",
(5,40)*-<1pt>{\circ}="j",
(0,50)*{}="k",
(5,50)*{}="l",
(-10,50)*{}="l1",
(-5,50)*{}="l2",
(0,50)*{}="l3",
(10,50)*{}="l5",
(14,40)*{}="m",
(26,40)*{}="n",
\ar@{-}"c";"d",
\ar@{-}"d";"e",
\ar@{-}"e";"h",
\ar@{-}"e";"h1",
\ar@{-}"h1";"l1",
\ar@{-}"h";"l2",
\ar@{-}"i";"l3",
\ar@{-}"h5";"l5",
\ar@{-}"e";"h5",
\ar@{-}"e";"i",
\ar@{-}"e";"j",
\ar@{-}"j";"l",
\endxy$$
\caption{The planted planar tree with leaves in even levels $C_{1}(C_{n}(C_{1},\dots,C_{1}))$ for $n=5$.}
\label{c1cnc1}
\end{figure}

\begin{thm}\label{aux1}
The morphism $f'\colon\mathcal{O}\r\mathcal{P}$ is an operad morphism. Moreover, if $g'\colon\mathcal{F}(V)\r\mathcal{P}$ is the operad morphism adjoint to $\bar{g}'$, then the following diagram is a push-out in $\operad{\C V}$,
\begin{equation*}
\xymatrix{\mathcal{F}(U)\ar[d]_{g}\ar[r]^{\mathcal{F}(f)}&\mathcal{F}(V)\ar[d]^{g'}\\
\mathcal{O}\ar[r]_{f'}&\mathcal{P}}
\end{equation*}
\end{thm}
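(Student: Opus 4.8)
I would first record that $f'$ is an operad morphism, which is immediate from the construction: $f'(n)$ is the canonical map $\mathcal{O}(n)=P_0(n)\r\mathcal{P}(n)$ into the colimit, it sends the unit $u$ of $\mathcal{O}$ to the unit \eqref{upush} of $\mathcal{P}$ by definition, and it intertwines the products since $\circ_i^{\mathcal{P}}$ is the colimit of the $c_i^{s,t}$ of Lemma~\ref{pond} and $c_i^{0,0}=\circ_i$. For the square, it suffices to verify the universal property of the push-out \eqref{po} as formulated just before Lemma~\ref{pind}: given an operad $\mathcal{P}'$, an operad morphism $f''\colon\mathcal{O}\r\mathcal{P}'$ and a sequence morphism $\bar{g}''\colon V\r\mathcal{P}'$ with $f''\bar{g}=\bar{g}''f$, there is a unique operad morphism $h\colon\mathcal{P}\r\mathcal{P}'$ with $hf'=f''$ and $h\bar{g}'=\bar{g}''$. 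By the adjunction $\mathcal{F}\dashv(\text{forget})$ an operad morphism out of $\mathcal{F}(V)$ is determined by its restriction to $V$, and $f''g=g''\mathcal{F}(f)$ translates into $f''\bar{g}=\bar{g}''f$, so this is equivalent to the statement phrased with $g'$.

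Commutativity of the square reduces, again by adjunction, to $f'\bar{g}=\bar{g}'f$ as maps $U\r\mathcal{P}$. I would prove this at each $n$ using the tree $T=C_1(C_n(C_1,\dots,C_1))$ of Figure~\ref{c1cnc1}, whose only even inner vertex $v$ has $\val{T}{v}=n$ and whose $n+1$ odd inner vertices have valence $1$. The push-out property of $\varphi_1(n)$ in Lemma~\ref{pind} gives $\bar{\psi}_1^{T}(f(n)\otimes\id{})=\varphi_1(n)\psi_1^{T}$; precomposing with the unit insertion and using that $p_T$ absorbs the surrounding units by relations~(3) and~(4) of Remark~\ref{circi} collapses $\psi_1^{T}(\id{}\otimes u^{\otimes(n+1)})$ to $\bar{g}(n)$. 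Since the colimit maps satisfy $(\text{canonical})\varphi_1(n)=f'(n)$, chasing the definition of $\bar{g}'(n)$ yields $\bar{g}'f=f'\bar{g}$.

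To build $h$, I would produce compatible morphisms $h_t\colon P_t\r\mathcal{P}'$ by induction on $t$, with $h_0=f''$ forced. Viewing $\mathcal{P}'$ as its operadic functor (Proposition~\ref{ofo}), set the \emph{tree evaluation}
$$\beta_t^{T}=\mathcal{P}'(p_T)\circ\Bigl(\bigotimes_{v\in I^e(T)}\bar{g}''(\val{T}{v})\otimes\bigotimes_{w\in I^o(T)}f''(\val{T}{w})\Bigr)\colon\bigotimes_{v\in I^e(T)} V(\val{T}{v})\otimes\bigotimes_{w\in I^o(T)}\mathcal{O}(\val{T}{w})\To\mathcal{P}'(n).$$
Since $\varphi_t(n)$ is the push-out of \eqref{monstruo} along \eqref{monstruo2}, defining $h_t$ amounts to the single compatibility $\beta_t^{T}(\bigodot_v f\otimes\id{})=h_{t-1}\psi_t^{T}$, which by the universal property of the source of an iterated $\odot$ from Section~\ref{mcm} is tested on each face $\kappa_u$ and thus reduces to the key identity $h_t\bar{\psi}_t^{T}=\beta_t^{T}$. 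I would prove this identity by induction on $t$: expanding $\psi_{t,u}^{T}$ as in Lemma~\ref{pind}, the inductive identity $h_{t-1}\bar{\psi}_{t-1}^{T/\star(u)}=\beta_{t-1}^{T/\star(u)}$ applies, the hypothesis $\bar{g}''f=f''\bar{g}$ handles slot $u$, the fact that $f''$ is a morphism of operadic functors converts $\mathcal{O}(p_{\overline{\star}(u)})$ into $\mathcal{P}'(p_{\overline{\star}(u)})$, and the factorization $p_T=p_{T/\star(u)}\circ p_{\star(u)}^{T}$ recombines everything into $\beta_t^{T}$.

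It remains to see that $h=\colim_t h_t$ is an operad morphism and is unique. The unit is preserved because $h(1)$ sends \eqref{upush} to $f''(1)u$, the unit of $\mathcal{P}'$. For the products, induced in the colimit by the $c_i^{s,t}$, I would prove $h_{s+t}c_i^{s,t}(m,n)=\circ_i^{\mathcal{P}'}(h_s\otimes h_t)$ by induction on $(s,t)$; using Lemma~\ref{podot} this reduces to comparing $h_{s+t}d_i^{s,t}(T,T')$ with $\circ_i^{\mathcal{P}'}(\beta_s^{T}\otimes\beta_t^{T'})$, where the key identity applied to $(T\circ_iT')/e$ together with the associativity relations~(1),(2) of Remark~\ref{circi} in $\mathcal{P}'$ does the job. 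Uniqueness is forced, since $h_0=f''$ and each $h_t$ is pinned down on the new cells by $\beta_t^{T}$, itself dictated by $f''$, $\bar{g}''$ and the operad structure of $\mathcal{P}'$. I expect the main obstacle to be organizing these nested inductions coherently, and in particular reconciling the \emph{local} multiplication $\circ_k$ built into $d_i^{s,t}(T,T')$ with the \emph{global} product $\circ_i$ of $\mathcal{P}'$ — precisely the point where the coherence provided by Proposition~\ref{ofo} is essential.
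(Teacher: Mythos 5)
Your proposal is correct and follows essentially the same route as the paper's proof: your tree evaluation $\beta^T_t$ is exactly the paper's defining condition for $h_t$, and your nested inductions (over $t$ for the construction of $h$ via faces $\kappa_u$ and the star contraction $p_{\overline{\star}(u)}$, over $(s,t)$ for compatibility with the products $c^{s,t}_i$ via $d^{s,t}_i(T,T')$) are precisely the ones the paper carries out, as is the verification of the commutative square using $\bar{T}=C_1(C_n(C_1,\dots,C_1))$. The only steps you leave implicit are the check $h\bar{g}'=\bar{g}''$, which follows from your key identity at $t=1$ applied to $\bar{T}$ together with unit preservation by $f''$ and relations (3)--(4) of Remark \ref{circi} in $\mathcal{P}'$, and the precise justification of uniqueness, for which the paper records the formula $\bar{\psi}^T_t=\mathcal{P}(p_T)\bigl(\bigotimes_{v\in I^e(T)}\bar{g}'(\val{T}{v})\otimes\bigotimes_{w\in I^o(T)}f'(\val{T}{w})\bigr)$ guaranteeing that any competing $h'$ must also satisfy your key identity.
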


\begin{proof}
The morphism $f'$ is an operad morphism by the very definition of the operad structure in $\mathcal{P}$, since $c_i^{0,0}=\circ_i$ is the structure morphism of $\mathcal{O}$ and the unit of $\mathcal{P}$ is the composition of the unit of $\mathcal{O}$ and $f'$, see Lemma~\ref{pond} and \eqref{upush}.  Moreover, the square
$$\xymatrix{U(l)\ar[d]_{\bar{g}(l)}\ar[r]^-{f(l)}&V(l)\ar[d]^{\bar{g}'(l)}\\
\mathcal{O}(l)\ar[r]^-{f'(l)}&\mathcal{P}(l)}$$
commutes for all $l\geq 0$. In fact, the following diagram commutes by some trivial facts, including the very definition of $P_{1}(l)$ in Lemma \ref{pind},
$$
\xymatrix@!C=72pt{
&
U(l)
\ar[d]_\cong^{(\text{right unit})^{-1}}
\ar[ld]_{\bar{g}(l)}
\ar[rr]^-{f(l)}&&
V(l)\ar[d]^{\cong}_{(\text{right unit})^{-1}}\\
\mathcal{O}(l)
\ar[rd]^-\cong_-{(\text{right unit})^{-1}\quad}
\ar@/_40pt/[rddd]_{\id{}}
&
U(l)\otimes \unit^{\otimes (l+1)}
\ar[d]|{\bar{g}(l)\otimes\id{\unit}^{\otimes(l+1)}}
\ar[rd]^{\qquad\id{}\otimes{u}^{\otimes(l+1)}}
\ar[rr]^-{f(l)\otimes\id{\unit}^{(l+1)}}&&
V(l)\otimes \unit^{\otimes (l+1)}\ar[d]^{\id{}\otimes u^{\otimes (l+1)}}\\
&
\mathcal{O}(l)\otimes \unit^{\otimes (l+1)}
\ar[d]|{\id{\mathcal{O}(l)}\otimes u^{\otimes (l+1)}}
\ar@/_43pt/[dd]_<(.2){\text{right unit}}
&U(l)\otimes \mathcal{O}(1)^{\otimes (l+1)}
\ar[ld]^{\quad\bar{g}(l)\otimes\id{}^{\otimes(l+1)}}
\ar@(dr,ld)[r]_{f(l)\otimes\id{\mathcal{O}(1)}^{(l+1)}}
\ar@/^45pt/[ldd]^{\psi^{C_{1}(C_{l}(C_{1},\dots,C_{1}))}_1}
&
V(l)\otimes \mathcal{O}(1)^{\otimes (l+1)}
\ar[dd]_<(.7){\bar{\psi}_1^{C_{1}(C_{l}(C_{1},\dots,C_{1}))}}\\&
\mathcal{O}(l)\otimes \mathcal{O}(1)^{\otimes (l+1)}
\ar[d]^{\mathcal{O}(p_{C_{1}(C_{l}(C_{1},\dots,C_{1}))})}
&&
\\&
\mathcal{O}(l)\ar@(d,d)[rr]_-{\varphi_{1}(l)}&&P_1(l)}
$$
and its outer (commutative) square
$$\xymatrix{U(l)\ar[ddd]_{\bar{g}(l)}\ar[r]^{f(l)}&V(l)\ar[d]_{\cong}^{(\text{right unit})^{-1}}\\
&V(l)\otimes \unit^{\otimes (l+1)}\ar[d]^{\id{}\otimes u^{\otimes (l+1)}}\\
&V(l)\otimes \mathcal{O}(1)^{\otimes (l+1)}\ar[d]^{\bar{\psi}_1^{C_{1}(C_{l}(C_{1},\dots,C_{1}))}}\\
\mathcal{O}(l)\ar[r]^{\varphi_{1}(l)}&P_1(l)}$$
composed with the canonical morphism $P_1(l)\r\colim_{r\geq 0}P_r(l)=\mathcal{P}(l)$
yields the former square.

Suppose we are given an operad $\mathcal{P}'$ and morphisms $f''\colon \mathcal{O}\r \mathcal{P}'$ in $\operad{\C V}$ and $\bar{g}''\colon V\r \mathcal{P}'$ in $\C{V}^{\mathbb N}$ 
such that the square
\begin{equation*}\tag{a}
\xymatrix{U(l)\ar[d]_{\bar{g}(l)}\ar[r]^-{f(l)}&V(l)\ar[d]^{\bar{g}''(l)}\\
\mathcal{O}(l)\ar[r]^-{f''(l)}&\mathcal{P}'(l)}
\end{equation*}
commutes for all $l\geq 0$. 
We must show that there is a unique morphism $h\colon \mathcal{P}\r \mathcal{P}'$ in $\operad{\C V}$ such that $f''=hf'$ and $\bar{g}''=h\bar{g}'$ in $\C{V}^{\mathbb N}$. 

We define morphisms
$$h_r(l)\colon P_r(l)\To \mathcal{P}'$$
by induction on $r\geq0$ as follows. We set $h_0(l)=f''(l)$. Assume we have defined up to~$h_{r-1}(l)$. Then we define $h_r(l)$ so that $h_r(l)\varphi_r(l)=h_{r-1}(l)$ and, for any planted planar tree $T$ with $l$ leaves concentrated in even levels and $r$ inner vertices in even levels, 
\begin{equation*}\tag{b}
h_r(l)\bar{\psi}^T_r=\mathcal{P}'(p_{T})(\hspace{-6pt}\bigotimes\limits_{v\in I^{e}(T)}\hspace{-8pt} \bar{g}''(\val{T}{v})\otimes
\hspace{-7pt}\bigotimes\limits_{w\in I^o(T)}\hspace{-7pt}f''(\val{T}{w})).
\end{equation*}
The morphism $h_r(l)$ is well defined by the universal property of the push-out definition of $P_r(l)$ in Lemma \ref{pind} since, given $u\in I^e(T)$, 
\begin{align*}
&\hspace{-40pt}\mathcal{P}'(p_{T})(\hspace{-6pt}\bigotimes\limits_{v\in I^{e}(T)}\hspace{-8pt} \bar{g}''(\val{}{v})\otimes
\hspace{-7pt}\bigotimes\limits_{w\in I^o(T)}\hspace{-7pt}f''(\val{}{w}))(f(\val{}{u})\otimes\id{})\\
={}&
\mathcal{P}'(p_{T})(\hspace{-15pt}\bigotimes\limits_{v\in I^{e}(T)\setminus\{u\}}\hspace{-8pt} \bar{g}''(\val{}{v})\otimes
\hspace{-7pt}\bigotimes\limits_{w\in I^o(T)\cup\{u\}}\hspace{-7pt}f''(\val{}{w}))(\id{}\otimes\bar{g}(\val{}{u}))\\
={}&
\mathcal{P}'(p_{T/\star(u)})(\hspace{-17pt}\bigotimes\limits_{v\in I^{e}(T/\star(u))}\hspace{-15pt} \bar{g}''(\val{}{v})\otimes
\hspace{-20pt}\bigotimes\limits_{w\in I^o(T/\star(u))}\hspace{-20pt}f''(\val{}{w}))
(\id{}\otimes\mathcal{O}(p_{\overline{\star}(u)}))
(\id{}\otimes\bar{g}(\val{}{u}))\\
={}&h_{r-1}(l)\bar{\psi}^{T/\star(u)}_{r-1}(\id{}\otimes\mathcal{O}(p_{\overline{\star}(u)}))
(\id{}	\otimes\bar{g}(\val{}{u}))\\
={}&h_{r-1}(l){\psi}^{T}_{r,u}.
\end{align*}
Here, in the first equation we use the commutativity of (a), in the second equation we use the fact that $f''$ is an operad morphism, and in the third equation we use the induction hypothesis. The fourth equation follows from the very definition of $\psi^T_{r,u}$ in the statement of Lemma \ref{pind}. For simplicity, in these equations we have omitted some symmetry isomorphisms in $\C V$.

We have checked that the morphisms $h_r(l)$ induce a morphism of sequences $h\colon\mathcal{P}\r\mathcal{P'}$ in the colimit. It is clear that $hf'=f''$ since $h_{0}=f''$, in particular $h$ preserves units. Moreover, $hg'=g''$ since for $\bar{T}=C_{1}(C_{l}(C_{1},\dots,C_{1}))$, see Figure~\ref{c1cnc1}, if $v\in I^{e}(\bar T)$ is the unique inner vertex in even levels, then
\begin{align*}
h_{1}(l)\bar{\psi}^{\bar{T}}_{1}(\id{V(l)}\otimes u^{\otimes(l+1)})&=
\mathcal{P}'(p_{\bar T})(\bar{g}''(l)\otimes f''(1)^{\otimes(l+1)})(\id{V(l)}\otimes u^{\otimes(l+1)})\\
&=
\mathcal{P}'(p_{\bar T})(\bar{g}''(l)\otimes u^{\otimes(l+1)})\\
&=
\bar{g}''(l).
\end{align*}
Here, in the first equation we use (b), in the second equation we use that $f''$ is an operad morphism and therefore it preserves units, and in the third equation we use relations (3) and (4) in Remark \ref{circi} for the operad $\mathcal{P}'$.

In order to check that $h$ is indeed an operad morphism, we  show that
$$h_{r+s}(l+m-1)c^{r,s}_i(l,m)=h_r(l)\circ_i h_s(m)$$
We proceed by induction on $(r,s)\in\mathbb{N}^2$ with respect to the graded lexicographic order. This is obvious for $r=s=0$, since $f''$ is an operad morphism. If the equation holds up to the predecessor of $(r,s)$ then by induction hypothesis we only have to check that 
the following equation holds,
$$h_{r+s}(l+m-1)c^{r,s}_i(l,m)(\bar{\psi}^T_r\otimes\bar{\psi}^{T'}_s)=(h_r(l)\bar{\psi}^T_r)\circ_i (h_s(m)\bar{\psi}^{T'}_s),$$
for $T'$ a planted planar tree with $m$ leaves concentrated in even levels and $s$ inner vertices in even levels. Let $u\in I^o(T)$ be the inner vertex of the $i^{\text{th}}$ leaf edge of $T$, $u'\in I^o(T')$ the unique level $1$ vertex of $T'$, and $e=\{u,u'\}\in E(T\circ_{i}T')$. Suppose that the $i^{\text{th}}$ leaf edge of $T$ is the $k^{\text{th}}$ incomming edge of $u$. Then,
\begin{align*}
&\hspace{-40pt}h_{r+s}(l+m-1)c^{r,s}_i(l,m)(\bar{\psi}^T_r\otimes\bar{\psi}^{T'}_s)\\
={}&h_{r+s}(l+m-1)d^{r,s}_{i}(T,T')\\
={}&h_{r+s}(l+m-1)\bar{\psi}^{(T\circ_{i}T')/e}_{s+t}(\circ_{k}\otimes\id{})\\
={}&\mathcal{P}'(p_{(T\circ_{i} T')/e})(\hspace{-20pt}\bigotimes\limits_{v\in I^{e}((T\circ_{i} T')/e)}\hspace{-20pt} \bar{g}''(\val{}{v})\;\;\otimes
\hspace{-15pt}\bigotimes\limits_{w\in I^o((T\circ_{i} T')/e)}\hspace{-20pt}f''(\val{}{w}))(\circ_{k}\otimes\id{})\\
={}&\mathcal{P}'(p_{(T\circ_{i} T')/e})(\circ_{k}\otimes\id{})(\hspace{-15pt}\bigotimes\limits_{v\in I^{e}(T)\cup I^{e}(T')}\hspace{-15pt} \bar{g}''(\val{}{v})\otimes
\hspace{-15pt}\bigotimes\limits_{w\in I^o(T)\cup I^{o}(T')}\hspace{-15pt}f''(\val{}{w}))\\
={}&(\mathcal{P}'(p_{T})\circ_{i}\mathcal{P}'(p_{T'}))(\hspace{-15pt}\bigotimes\limits_{v\in I^{e}(T)\cup I^{e}(T')}\hspace{-15pt} \bar{g}''(\val{}{v})\otimes
\hspace{-15pt}\bigotimes\limits_{w\in I^o(T)\cup I^{o}(T')}\hspace{-15pt}f''(\val{}{w}))\\
={}&(h_r(l)\bar{\psi}^T_r)\circ_i (h_s(m)\bar{\psi}^{T'}_s).
\end{align*}
Here $\circ_{k}$ denotes  either $\circ_{k}\colon \mathcal{O}(\val{T}{u})\otimes \mathcal{O}(\val{T'}{u'})\r
\mathcal{O}(\val{(T\circ T')/e}{[e]})$ or $\circ_{k}\colon \mathcal{P}'(\val{T}{u})\otimes \mathcal{P}'(\val{T'}{u'})\r
\mathcal{P}'(\val{(T\circ T')/e}{[e]})$. 
Moreover,  in the first equation we use the inductive definition of $c^{r,s}_{i}(l,m)$ in Lemma~\ref{pond}, in the second equation we use the definition of $d_{i}^{r,s}(T,T')$ also in Lemma~\ref{pond}, in the third equation we use (b), in the fourth equation we use that $f''$ is an operad morphism, in the fifth equation we use the construction of operadic functors from operads in Proposition \ref{ofo}, and in the final equation we use (b) again. Furthermore, for simplicity we have  omitted some symmetry isomorphisms in~$\C V$ in these equations.

The uniqueness of $h$ follows from the fact that 
the morphism $\bar{\psi}^{T}_{r}$ defined in Lemma \ref{pind} is related to the operadic functor of $\mathcal{P}$ by the following equation,
$$\bar{\psi}^{T}_{r}=\mathcal{P}(p_{T})(\hspace{-6pt}\bigotimes\limits_{v\in I^{e}(T)}\hspace{-8pt} \bar{g}'(\val{T}{v})\otimes
\hspace{-7pt}\bigotimes\limits_{w\in I^o(T)}\hspace{-7pt}f'(\val{T}{w})).$$
Therefore, if $h'\colon \mathcal{P}\r \mathcal{P}'$is an  operad morphism satisfying $h'f'=f''$ and $h'\bar{g}'=\bar{g}''$, and if we denote $h'_{r}(l)$ the composition of $h'(l)$ with the canonical morphism to the colimit $P_{r}(l)\r\mathcal{P}(l)$, then the morphisms $h'_{r}(l)$ must satisfy $h'_{0}(l)=f''(l)$, and also (b) after replacing $h_{r}(m)$ with $h'_{r}(m)$, therefore $h'=h$ by the universal property of the push-outs $P_{r}(l)$ and the colimit $\mathcal{P}$.
\end{proof}

\section{Proof of Theorem \ref{elt}}

Assume in this section that $\C V$ is also a cofibrantly generated monoidal model category (see Definition \ref{defm}) satisfying the monoid axiom \cite[Definition 3.3]{ammmc}. In order to explain what this means, let us recall some terminology from \cite{hmc}. 

Given an ordinal $\lambda$, a directed diagram $X\colon\lambda\r\C{V}$ is \emph{continuous} if for any limit ordinal $\alpha<\lambda$, the canonical morphism $$\colim_{i<\alpha}X_{i}\To X_{\alpha}$$
is an isomorphism. The natural morphism from the first object to the colimit $$X_{0}\To\colim_{i<\lambda} X_{i}$$
is said to be the \emph{transfinite composition} of the morphisms in the diagram. We here do not exclude the possibility that $\lambda$ be finite.

Given a class of morphisms $K$ in $\C{V}$, a \emph{relative $K$-cell complex}  is a transfinite composition of morphisms $X\colon \lambda\r \C{V}$ such that for any $i<\lambda$ with $i+1<\lambda$ the morphism $X_{i}\r X_{i+1}$ fits into a push-out diagram as follows, where the top horizontal arrow is in $K$,
$$\xymatrix{A\ar[d]\ar@{}[rd]|{\text{push}}\ar[r]^-{\text{in }K}& B\ar[d]\\X_{i}\ar[r]&X_{i+1}}$$
A plain \emph{$K$-cell complex} is a relative $K$-cell complex with $X_{0}=0$ the initial object of $\C{V}$.

If $I$ and $J$ are sets of generating cofibrations and generating trivial cofibrations in~$\C V$, respectively, then the cofibrations in~$\C{V}$ are exactly the retracts of relative $I$-cell complexes, and the trivial cofibrations are the retracts of relative $J$-cell complexes. In particular the cofibrant objects in $\C{V}$ are the retracts of $I$-cell complexes. So far, nothing of this needs either the monoidal structure of $\C{V}$ or its model category structure. 

\begin{defn}\label{max}
The \emph{monoid axiom} for $\C{V}$ says that, for
$$K=\{f\otimes X\,;\,f\text{ is a trivial cofibration and }X\text{ is an object in }\C{V}\},$$
all relative $K$-cell complexes are weak equivalences.
\end{defn}

\begin{prop}
Consider a push-out diagram in $\operad{\C V}$ as follows,
 $$\xymatrix{\mathcal{F}(U)\ar[d]_{g}\ar[r]^{\mathcal{F}(f)}\ar@{}[rd]|{\text{push}}&\mathcal{F}(V)\ar[d]^{g'}\\
\mathcal{O}\ar[r]^-{f'}&\mathcal{P}}$$
If $f$ is a trivial cofibration then $f'(n)\colon \mathcal{O}(n)\r \mathcal{P}(n)$ is a relative $K$-cell complex, $n\geq 0$, where $K$ is the class in the previous definition.
\end{prop}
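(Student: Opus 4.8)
The plan is to read the cell structure directly off the explicit model of the push-out constructed in Section \ref{ropo}. Recall that $\mathcal{P}(n)=\colim_{t\geq 0}P_t(n)$, that $P_0(n)=\mathcal{O}(n)$, and that by Theorem \ref{aux1} the morphism $f'(n)$ is precisely the canonical map from the bottom of this sequential colimit. Thus $f'(n)$ is the transfinite composition of the maps $\varphi_t(n)\colon P_{t-1}(n)\to P_t(n)$, $t\geq 1$. Since relative $K$-cell complexes are closed under transfinite composition, it suffices to show that each $\varphi_t(n)$ is a relative $K$-cell complex.

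First I would invoke Lemma \ref{pind}, which exhibits $\varphi_t(n)$ as the push-out of the coproduct \eqref{monstruo} along the morphism \eqref{monstruo2}. The summand of \eqref{monstruo} indexed by a tree $T$ is
$$h_T=\Bigl(\bigodot_{v\in I^{e}(T)}f(\val{T}{v})\Bigr)\otimes\bigotimes_{w\in I^{o}(T)}\mathcal{O}(\val{T}{w}),$$
and the crucial point is that $h_T$ lies in the class $K$ of Definition \ref{max}. Indeed, since $f$ is a trivial cofibration in $\C{V}^{\mathbb N}$, each component $f(\val{T}{v})$ is a trivial cofibration in $\C V$ (Remark \ref{jn}); by the push-out product axiom (Definition \ref{defm}), applied inductively on $\card I^e(T)=t$, the iterated product $\bigodot_{v\in I^{e}(T)}f(\val{T}{v})$ is again a trivial cofibration. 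Tensoring it with the fixed object $\bigotimes_{w\in I^{o}(T)}\mathcal{O}(\val{T}{w})$ gives exactly a morphism of the shape (trivial cofibration)$\,\otimes\,$(object), so $h_T\in K$.

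It then remains to note that the push-out of a coproduct $\coprod_T h_T$ of morphisms of $K$ along an arbitrary map is a relative $K$-cell complex. I would well-order the (small) set of chosen representatives of planted planar trees with $n$ leaves concentrated in even levels and $t$ inner even vertices, and attach the cells $h_T$ one at a time, forming successive push-outs and taking colimits at limit stages; the colimit of the resulting continuous sequence is $P_t(n)$ and each one-step map is a push-out of a single $h_T\in K$. This presents $\varphi_t(n)$, and therefore $f'(n)$, as a relative $K$-cell complex.

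Essentially all the combinatorial work is already carried out in Lemma \ref{pind}, so what is left is formal. The only step demanding genuine care is the identification $h_T\in K$: one has to recognise that the source $s(\bigodot_{v}f(\val{T}{v}))$, once tensored with $\bigotimes_{w}\mathcal{O}(\val{T}{w})$, matches the shape $f\otimes X$ of the monoid axiom, and that the push-out product axiom upgrades an iterated $\odot$-product of trivial cofibrations to a trivial cofibration with no cofibrancy hypothesis on the sources. I expect no obstacle beyond this bookkeeping and the standard decomposition of a coproduct push-out into a cell attachment.
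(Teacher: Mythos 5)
Your proof is correct and takes essentially the same route as the paper: the paper likewise combines Lemma \ref{pind} and Theorem \ref{aux1} to present $f'(n)$ as a transfinite composition of push-outs of the morphisms \eqref{monstruo}, and invokes the push-out product axiom (Definition \ref{defm}) to place those morphisms in the class $K$. You merely make explicit two steps the paper leaves implicit, namely the induction showing that the iterated $\odot$-product of trivial cofibrations is a trivial cofibration, and the standard decomposition of a push-out along a coproduct of cells into a sequence of single-cell attachments.
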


\begin{proof}
By the push-out product axiom  (Definition \ref{defm}), the morphism (\ref{monstruo}) in Lemma \ref{pind} is the tensor product of a trivial cofibration and an object in $\C V$, i.e. $\eqref{monstruo}\in K$. Therefore, by Theorem \ref{aux1}, $f'(n)$ is a relative $K$-cell complex.
\end{proof}

Consider  the associated sets of  generating cofibrations and  generating trivial cofibrations in $\C{V}^{\mathbb{N}}$, $I_{\mathbb{N}}$ and $J_{\mathbb{N}}$, respectively, see Remark~\ref{jn}.

\begin{cor}\label{ayyy}
If $\C V$ satisfies the monoid axiom, then a morphism in~$\C{V}^{\mathbb{N}}$ underlying a relative $\mathcal{F}(J_{\mathbb{N}})$-cell complex in $\operad{\C V}$ is a weak equivalence in~$\C{V}^{\mathbb{N}}$.
\end{cor}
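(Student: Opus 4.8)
The plan is to reduce the statement one cell at a time to the previous proposition, and then to pass to the colimit exploiting that the forgetful functor $\operad{\C V}\r\C{V}^{\mathbb{N}}$ preserves filtered colimits. First I would unwind the definitions: a relative $\mathcal{F}(J_{\mathbb{N}})$-cell complex is a transfinite composition $X\colon\lambda\r\operad{\C V}$ in which each successor morphism $X_i\r X_{i+1}$ sits in a push-out square whose top arrow lies in $\mathcal{F}(J_{\mathbb{N}})$. Every such arrow is $\mathcal{F}(f)$ for some $f\in J_{\mathbb{N}}$, and $f$ is a trivial cofibration in $\C{V}^{\mathbb{N}}$, its components $f(k)$ being trivial cofibrations in $\C V$ since the model structure on $\C{V}^{\mathbb{N}}$ is coordinatewise and $J_{\mathbb{N}}$ consists of generating trivial cofibrations (Remark \ref{jn}). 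Thus each successor step is exactly a push-out of $\mathcal{F}(f)$ along an attaching map $\mathcal{F}(U)\r X_i$ as treated in the previous proposition, so I may conclude that $X_i(n)\r X_{i+1}(n)$ is a relative $K$-cell complex in $\C V$ for every $n\geq 0$.

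Next I would pass to the whole transfinite composition. The crucial observation is that the forgetful functor $\operad{\C V}\r\C{V}^{\mathbb{N}}$ preserves filtered colimits: since $-\circ V$ preserves all colimits and $U\circ-$ preserves filtered colimits, the composition product $\circ$ preserves filtered colimits in each variable, and so filtered colimits of operads are created levelwise in $\C{V}^{\mathbb{N}}$, the multiplication on the colimit being induced along the cofinal diagonal. As $\lambda$ is a directed poset, this shows that for each $n$ the underlying diagram $X(n)\colon\lambda\r\C V$ is again a continuous transfinite composition, with colimit $\mathcal{P}(n)=\colim_{i<\lambda}X_i(n)$, and whose first structure map is the underlying level-$n$ morphism we must analyze.

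Finally I would invoke closure of relative cell complexes under transfinite composition: the level-$n$ morphism is a transfinite composition of relative $K$-cell complexes, hence itself a relative $K$-cell complex in $\C V$. By the monoid axiom (Definition \ref{max}) it is therefore a weak equivalence, and since weak equivalences in $\C{V}^{\mathbb{N}}$ are detected coordinatewise, the underlying morphism is a weak equivalence in $\C{V}^{\mathbb{N}}$. The main obstacle I anticipate is the middle step: verifying that the forgetful functor genuinely commutes with the transfinite colimit, so that passing to underlying sequences does not destroy the cell structure, rests essentially on the fact that $U\circ-$ preserves filtered colimits, and some care is needed at limit ordinals to ensure that continuity of the underlying diagram is preserved.
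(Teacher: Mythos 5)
Your proof is correct and follows essentially the same route the paper intends: the paper states this corollary as an immediate consequence of the preceding proposition, leaving implicit exactly the steps you spell out (each cell attachment is levelwise a relative $K$-cell complex by the proposition, the forgetful functor $\operad{\C V}\r\C{V}^{\mathbb{N}}$ preserves filtered colimits so the underlying diagram remains a continuous transfinite composition, relative $K$-cell complexes are closed under transfinite composition, and the monoid axiom then gives the weak equivalence). Your care at limit ordinals and your justification via $U\circ -$ preserving filtered colimits match the paper's own remarks in Remark 2.2 and the proof of Theorem 1.1.
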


Now we are ready to prove Theorem \ref{elt}.

\begin{proof}[Proof of Theorem \ref{elt}]
It is easy to see that operadic functors are the same as algebras over the monad associated to free operad adjunction in  Section \ref{ropo}. Therefore, using  the equivalence between   operads and operadic functors in Proposition \ref{ofo}, one can easily show that the natural comparison functor from operads to algebras over the  the free operad monad is an equivalence of categories. Moreover, this monad preserves filtered colimits, see  the explicit construction in  Section \ref{ropo}, therefore the category $\operad{\C V}$ is complete and cocomplete \cite[Proposition 4.3.6]{borceux2}. Furthermore, the forgetful functor $\operad{\C V}\r\C{V}^{\mathbb{N}}$ also preserves filtered colimits \cite[Proposition 4.3.2]{borceux2}, in particular, since $\mathcal{F}$ is a left adjoint and sources of morphisms in $I$ and $J$ are presentable in $\C V$, then sources of morphisms in $\mathcal{F}(I_{\mathbb{N}})$ and $\mathcal{F}(J_{\mathbb{N}})$ are presentable in~$\operad{\C{V}}$.

We can apply \cite[Lemma 2.3]{ammmc} in order to prove the existence of the claimed model structure in $\operad{\C V}$. The smallness condition has already been checked, and condition (1) of \cite[Lemma 2.3]{ammmc} has been established in Corollary \ref{ayyy}. 

Recall that a model category is right proper if the pull-back of a weak equivalence along a fibration is again a weak equivalence \cite[Definition 13.1.1 (2)]{hirschhorn}. The statement about right properness is obvious since fibrations and weak equivalences in $\operad{\C{V}}$ are detected by the forgetful functor $\operad{\C{V}}\r\C{V}^{\mathbb{N}}$, and this functor is a right adjoint, so it preserves all limits, in particular pull-backs.

Recall also that a model category is combinatorial if it is cofibrantly generated and locally presentable. If $ {\C V}$ is combinatorial then $\operad{\C V}$ is locally presentable by \cite[2.3 (1) and the Theorem in 2.78]{adamekrosicky}, hence it is combinatorial. 
\end{proof}

\section{Algebras}

Assume we have a strong braided monoidal functor $\C{V}\r Z(\C{C})$, where $Z(\C{C})$ is the center of $\C{C}$, defined in \cite{tybotc}. Such a functor consists of an ordinary  functor $$z\colon\C{V}\To\C{C},$$  
together with natural isomorphisms,
\begin{align*}
\text{multiplication}\colon z(X)\otimes z(X')&\st{}\To z(X\otimes X'),\\
\text{unit}\colon \unit_{\C C}&\st{}\To z(\unit_{\C V}),\\
\zeta(X,Y)\colon z(X)\otimes Y &\st{}\To Y\otimes z(X),
\end{align*}
such that the multiplication and the unit satisfy well-known coherence laws \cite[Definition 6.4.1]{borceux2} and  the following three diagrams of isomorphisms commute,
$$
\begin{array}{c}\xymatrix@C=40pt{z(X)\otimes z(X')\ar[r]^{\zeta(X,z(X'))}_{{}}\ar[d]_{\text{mult.}}^{}&z(X')\otimes z(X)\ar[d]^{\text{mult.}}_{}\\
z(X\otimes X')\ar[r]_{z(\text{sym.})}^{{}}&z(X'\otimes X)}\end{array}\qquad
\begin{array}{c}\xy
(0,0)*+{\unit_{\C C}\otimes Y}="a",
(20,-10)*+{z(\unit_{\C V})\otimes Y}="c",
(-20,-10)*+{Y}="b",
(12,-25)*+{Y\otimes z(\unit_{\C V})}="e",
(-12,-25)*+{Y\otimes \unit_{\C C}}="d",
(-1,-28)*{\scriptstyle Y\otimes\,\text{unit}},
\ar"a";"b"_{\text{left unit}\quad}^{{}}
\ar"a";"c"^{\quad\text{unit}\,\otimes Y}_{}
\ar"b";"d"^-{(\text{right unit})^{-1}}^{{}}
\ar"c";"e"_{{}}^{\zeta(\unit_{\C V},Y)}
\ar"d";"e"^{}
\endxy
\end{array}
$$

$$\xy/r2.3pt/:
(-15,0)*+{z(X)\otimes (z(X')\otimes Y)}="a",
(35,0)*+{(z(X)\otimes z(X'))\otimes Y}="aa",
(-30,-20)*+{z(X)\otimes (Y\otimes z(X'))}="b",
(50,-20)*+{z(X\otimes X')\otimes Y}="c",
(-30,-40)*+{(z(X)\otimes Y)\otimes z(X')}="bb",
(50,-40)*+{Y\otimes z(X\otimes X')}="e",
(-15,-60)*+{(Y\otimes z(X))\otimes z(X')}="d",
(35,-60)*+{Y\otimes(z(X)\otimes z(X'))}="dd",
\ar"a";"b"_-{z(X)\otimes\zeta(X',Y)}^{{}}
\ar"aa";"c"^-{\text{mult.}\otimes Y}	
\ar"bb";"d"_-{\zeta(X,Y)\otimes z(X')}^{{}}
\ar"c";"e"_{{}}^-{\zeta(X\otimes X',Y)}
\ar"dd";"e"_-{\scriptstyle Y\otimes\text{mult.}}
\ar@{<-}"a";"aa"_{{}}^-{\;\,\text{assoc.}}
\ar"b";"bb"^{{}}_-{\text{assoc.}}
\ar"d";"dd"^{{}}_-{\text{assoc.}}
\endxy$$

Moreover, suppose that the functor $z(-)\otimes Y\colon\C{V}\r \C{C}$
has a right adjoint, 
\begin{align*}
\hom_{\C C}(Y,-)\colon&\C{C}\To \C{V}.
\end{align*}
We will use the \emph{evaluation morphism},
$$\text{evaluation}\colon z(\hom_{\C C}(Y,Z))\otimes Y\To Z,$$
which is the adjoint of the identity in $\hom_{\C C}(Y,Z)$.

\begin{defn}\label{eop}
The \emph{endomorphism operad} of an object $Y$ in $\C C$ is the non-symmetric operad $\texttt{End}_{\C C}(Y)$ in $\C V$ with 
$$\texttt{End}_{\C C}(Y)(n)=\hom_{\C C}(Y\otimes\st{n}\cdots\otimes Y,Y).$$
The unit $$u\colon \unit_{\C{V}}\To \texttt{End}_{\C C}(Y)(1)$$ is the adjoint of
$$\xymatrix@C=40pt{z(\unit_{\C{V}})\otimes Y\ar[r]^-{\text{unit}^{-1}\otimes Y}&\unit_{\C{C}}\otimes Y\ar[r]^-{\text{left unit}}&Y.}$$
The composition laws, $1\leq i\leq m$, $n\geq 0$,
$$\circ_i\colon\texttt{End}_{\C C}(Y)(m)\otimes \texttt{End}_{\C C}(Y)(n)\To \texttt{End}_{\C C}(Y)(m+n-1)$$
are the adjoints of
$$\xymatrix{z(\hom_{\C C}(Y^{\otimes m},Y)\otimes \hom_{\C C}(Y^{\otimes n},Y))\otimes Y^{\otimes (m+n-1)}
\ar[d]_{\text{mult.}^{-1}\,\otimes \id{} 
}^\cong\\
z(\hom_{\C C}(Y^{\otimes m},Y))\otimes z(\hom_{\C C}(Y^{\otimes n},Y))\otimes Y^{\otimes (m+n-1)}
\ar[d]^\cong_{
\id{}\otimes\zeta(\hom_{\C C}(Y^{\otimes n},Y),Y^{\otimes (i-1)})\otimes 
\id{}
}\\
z(\hom_{\C C}(Y^{\otimes m},Y))\otimes Y^{\otimes (i-1)}\otimes z(\hom_{\C C}(Y^{\otimes n},Y))\otimes Y^{\otimes n}\otimes Y^{\otimes (m-i)}
\ar[d]_{
\id{}\otimes \,\text{evaluation}\,\otimes \id{}
}
\\
z(\hom_{\C C}(Y^{\otimes m},Y))\otimes Y^{\otimes (i-1)}\otimes Y\otimes Y^{\otimes (m-i)}\ar@{=}[d]\\
z(\hom_{\C C}(Y^{\otimes m},Y))\otimes Y^{\otimes m}
\ar[d]_{\text{evaluation}}\\
Y}$$
Here we have omitted some obvious associativity isomorphisms in $\C C$. 

Given a non-symmetric operad $\mathcal{O}$ in $\C V$, an \emph{$\mathcal{O}$-algebra} in $\C C$ is an object $Y$ in $\C C$ together with an operad morphism $\mathcal{O}\r \texttt{End}_{\C C}(Y)$. 

Equivalently, an $\mathcal{O}$-algebra structure on $Y$ is given by morphisms in~$\C C$, $n\geq 0$,
\begin{equation}\label{refi}
\nu_n\colon z(\mathcal{O}(n))\otimes Y^{\otimes n}\To Y,
\end{equation}
such that the following diagrams commute, $1\leq i\leq m$, $n\geq 0$,
$$\xymatrix@C=35pt{
z(\unit_{\C V})\otimes Y\ar[r]^-{z(u)\otimes \id{}}\ar@{<-}[d]_-{\text{unit}\,\otimes\, \id{}}^-{\cong}&z(\mathcal{O}(1))\otimes Y\ar[d]^-{\nu_{1}}\\
\unit_{\C C}\otimes Y\ar[r]_-{\text{left unit}}^-{\cong}&Y\\
}$$
$$\xymatrix@C=-70pt{&z(\mathcal{O}(m))\otimes z(\mathcal{O}(n))\otimes Y^{\otimes (m+n-1)}\ar[rd]^-{\quad\text{mult.}\,\otimes \id{}
}_-{\cong}
\ar[ld]_{
\id{}\otimes \zeta(\mathcal{O}(n),Y^{\otimes (i-1)})\otimes \id{}
\qquad\qquad
}^-{\cong}
&\\
z(\mathcal{O}(m))\otimes Y^{\otimes (i-1)}\otimes z(\mathcal{O}(n))\otimes Y^{\otimes n}\otimes Y^{\otimes (m-i)}
\ar[d]_{
\id{}\otimes\nu_n\otimes \id{} 
}
&&z(\mathcal{O}(m)\otimes \mathcal{O}(n))\otimes Y^{\otimes (m+n-1)}\ar[d]^{z(\circ_i)\otimes \id{}}\\
z(\mathcal{O}(m))\otimes Y^{\otimes m}\ar[rd]_-{\nu_m}&&z(\mathcal{O}(m+n-1))\otimes Y^{\otimes (m+n-1)}\ar[ld]^{\quad\nu_{m+n-1}}\\
&Y&}$$
An \emph{$\mathcal{O}$-algebra morphism} $f\colon Y\r Z$ is a morphism in $\C C$ such that the following squares commute, $n\geq 0$,
$$\xymatrix{z(\mathcal{O}(n))\otimes Y^{\otimes n}\ar[r]^-{\nu_n^Y} \ar[d]_{
\id{}\otimes f^{\otimes n}}&Y\ar[d]^f\\
z(\mathcal{O}(n))\otimes Z^{\otimes n}\ar[r]_-{\nu_n^Z} &Z}$$
The category of $\mathcal{O}$-algebras in $\C C$ will be denoted by $\algebra{\mathcal{O}}{\C C}$.
\end{defn}

\begin{rem}\label{alal}
The initial $\mathcal{O}$-algebra in $\C C$ is $z(\mathcal{O}(0))$ with structure morphisms
$$\xy
(0,0)*+{\nu_{n}\colon z(\mathcal{O}(n))\otimes z(\mathcal{O}(0))^{\otimes n}}="a",
(50,0)*+{z(\mathcal{O}(n)\otimes \mathcal{O}(0)^{\otimes n})}="b",
(90,0)*+{z(\mathcal{O}(0)).}="c"
\ar"a";"b"^-{\text{mult.}}_-{\cong}
\ar"b";"c"^-{z(\mu_{n;0,\st{n}{\dots},0})}
\endxy$$
Here we use the convention $\mu_{0;\emptyset}=\id{\mathcal{O}(0)}$. If $A$ is an $\mathcal{O}$-algebra, the structure morphism $\nu_{0}^{A}\colon z(\mathcal{O}(0))\r A$ is the unique morphism of $\mathcal{O}$-algebras $z(\mathcal{O}(0))\r A$. The final $\mathcal{O}$-algebra in $\C C$ is the final object of $\C C$ endowed with the only possible $\mathcal{O}$-algebra structure.
\end{rem}

\section{The relevant algebra push-out}\label{rapo}

Assume we are in the same circumstances as in the previous section. Let $\mathcal{O}$ be a non-symmetric operad in $\C V$. 
The functor $\algebra{\mathcal{O}}{\C C}\r\C{C}$ forgetting the $\mathcal{O}$-algebra structure has a left adjoint $$\mathcal{F}_{\mathcal{O}}\colon\C{C}\To\algebra{\mathcal{O}}{\C C},$$ the \emph{free $\mathcal{O}$-algebra} functor, explicitly defined as
$$\mathcal{F}_{\mathcal{O}}(Y)=\coprod_{p\geq 0}z(\mathcal{O}(p))\otimes Y^{\otimes p}.$$
The action of $\mathcal{O}$ on $\mathcal{F}_{\mathcal{O}}(Y)$,
$$\xy
(0,0)*{
\begin{aligned}
z(\mathcal{O}(n))\otimes \mathcal{F}_{\mathcal{O}}(Y)^{\otimes n}&=
z(\mathcal{O}(n))\otimes\bigotimes_{i=1}^{n}\left(\coprod_{p_{i}\geq 0}z(\mathcal{O}(p_{i}))\otimes Y^{\otimes p_{i}}\right)\\
&\cong \hspace{-10pt}
\coprod_{p_{1},\dots,p_{n}\geq 0}\hspace{-10pt}z(\mathcal{O}(n))\otimes\bigotimes_{i=1}^{n}\left(z(\mathcal{O}(p_{i}))\otimes Y^{\otimes p_{i}}\right)\\
&\cong \hspace{-10pt}
\coprod_{p_{1},\dots,p_{n}\geq 0}\hspace{-10pt}z(\mathcal{O}(n))\otimes z(\mathcal{O}(p_{1}))
\otimes\cdots\otimes z(\mathcal{O}(p_{n}))
\otimes Y^{\otimes \sum_{i=1}^{n}p_{i}}\\
&\cong \hspace{-10pt}
\coprod_{p_{1},\dots,p_{n}\geq 0}\hspace{-10pt}z\left(\mathcal{O}(n)\otimes \mathcal{O}(p_{1})
\otimes\cdots\mathcal{O}(p_{n})\right)
\otimes Y^{\otimes \sum_{i=1}^{n}p_{i}}
\\&\\&\\&\\
\mathcal{F}_{\mathcal{O}}(Y)&=\coprod_{p\geq 0}z(\mathcal{O}(p))\otimes Y^{\otimes p},
\end{aligned}}
\ar@(d,lu)(-45.3,26);(-40,-28)_{\nu_{n}}
\endxy$$
is defined as the morphism which sends the factor $(p_{1},\dots,p_{n})\in\mathbb{N}^{n}$ in the source to the factor $p=p_{1}+\cdots+p_{n}\in\mathbb{N}$ in the target via $z(\mu_{n;p_{1},\dots,p_{n}})\otimes\id{}$, $n\geq 1$. For $n=0$, the morphism $\nu_{0}\colon z(\mathcal{O}(0))\r\mathcal{F}_{\mathcal{O}}(Y)$ is the inclusion of the factor $p=0$ of the coproduct.

The unit of the adjunction is the following composite morphism in $\C C$,
$$\xy
(0,0)*+{Y}="a",
(23,0)*+{\unit_{\C C}\otimes Y}="b",
(50,0)*+{z(\unit_{\C V})\otimes Y}="c",
(80,0)*+{z(\mathcal{O}(1))\otimes Y}="d",
(115,0)*+{\mathcal{F}_{\mathcal{O}}(Y).}="e",
\ar"a";"b"_-\cong^-{(\text{left unit})^{-1}}
\ar"b";"c"_-\cong^-{\text{unit}\otimes\id{}}
\ar"c";"d"^-{z(u)\otimes\id{}}
\ar"d";"e"^-{\begin{array}{c}
\scriptstyle\text{inclusion of}\vspace{-5pt} \\
\scriptstyle\text{the factor }p=1
\end{array}}
\endxy$$
Moreover, given an $\mathcal{O}$-algebra $A$, the counit of the adjunction is defined by the multiplication morphisms in \eqref{refi},
$$(\nu_p)_{p\geq 0}\colon \mathcal{F}_{\mathcal{O}}(A)\To A.$$

In this section we give an explicit construction of the push-out of a diagram in $\algebra{\mathcal{O}}{\C C}$ as follows,
\begin{equation}\label{po2}
\xymatrix{\mathcal{F}_{\mathcal{O}}(Y)\ar[d]_{g}\ar[r]^{\mathcal{F}_{\mathcal{O}}(f)}&\mathcal{F}_{\mathcal{O}}(Z)\\
A&}
\end{equation}

Consider the adjoint diagram in ${\C C}$,
$$\xymatrix{Y\ar[d]_{\bar g}\ar[r]^{f}&Z\\
A&}$$
The push-out of \eqref{po2} is an $\mathcal{O}$-algebra $B$ together with morphisms $f'\colon A\r B$ in $\algebra{\mathcal{O}}{\C C}$ and $\bar{g}'\colon Z\r B$ in $\C{C}$ such that
$f'\bar{g}=\bar{g}'f$ in $\C{C}$. Moreover,  given an $\mathcal{O}$-algebra $B'$ and morphisms $f''\colon A\r B'$ in $\algebra{\mathcal{O}}{\C C}$ and $\bar{g}''\colon Z\r \mathcal{P}'$ in $\C{C}$ with $f''\bar{g}=\bar{g}''f$ in~$\C{C}$, 
there is a unique morphism $h\colon B\r B'$ in $\algebra{\mathcal{O}}{\C C}$ such that $f''=hf'$ and $\bar{g}''=h\bar{g}'$ in $\C{C}$. 

The following lemma allows an inductive definition of the push-out \eqref{po2} as an object in $\C C$. We omit  proofs in this section since the results are simpler analogs of those in Section  \ref{ropo}, and the proofs follow very much the same steps.

\begin{lem}\label{pind2}
There is a sequence  in $\C{C}$,
$$A=B_0 \st{\varphi_1}\To B_1 \r\cdots\r B_{t-1} \st{\varphi_t}\To B_t \r\cdots 
,$$
where the morphism $\varphi_t$ is the push-out of 
\begin{equation}\label{killo}
\coprod\limits_{n\geq 1}\coprod\limits_{
\begin{array}{c}
\scriptstyle S\subset\{1,\dots,n\}\vspace{-3pt}\\
\scriptstyle \card(S)=t
\end{array}
}
z(\mathcal{O}(n))\otimes k_1^S\odot\cdots\odot k_n^S;\qquad
k_i^S=
\left\{
\begin{array}{ll}
 f,&i\in S;\\
0\r A,& i\notin S;
\end{array}
\right.
\end{equation}
along the unique morphism,
\begin{equation}\label{killo2}
(\psi_t^{n,S})_{n,S}\colon \coprod\limits_{n\geq 1}\coprod\limits_{
\begin{array}{c}
\scriptstyle S\subset\{1,\dots,n\}\vspace{-3pt}\\
\scriptstyle \card(S)=t
\end{array}
}\hspace{-15pt}
z(\mathcal{O}(n))\otimes s(k_1^S\odot\cdots\odot k_n^S)\To B_{t-1},
\end{equation}
such that for $t=1$ and $1\leq i\leq n$, 
$$\psi^{n,\{i\}}_1=\nu_n(\id{z(\mathcal{O}(n))}\otimes \id{}^{\otimes (i-1)}\otimes\bar{g}\otimes\id{}^{\otimes(n-i)}),$$
and 
for $t>1$ and $i\in S$,
$$\psi^{n,S}_t(\id{z(\mathcal{O}(n))}\otimes\kappa_i)=\bar{\psi}^{n,S\setminus\{i\}}_{t-1}
(\id{z(\mathcal{O}(n))}\otimes \id{}^{\otimes (i-1)}\otimes\bar{g}\otimes\id{}^{\otimes(n-i)}).$$
Here $(\bar{\psi}^{n,S'}_{t-1})_{n,S'}$ denotes the push-out of $({\psi}^{n,S'}_{t-1})_{n,S'}$, i.e. \eqref{killo2} for $t-1$, along   \eqref{killo}.
\end{lem}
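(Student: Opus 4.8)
The plan is to imitate the proof of Lemma \ref{pind} step for step, the key observation being that in the algebra setting the elaborate tree combinatorics degenerates to that of a single corolla. Indeed, a summand of \eqref{killo} is indexed by a pair $(n,S)$ with $S\subset\{1,\dots,n\}$ and $\card(S)=t$, and I would picture $z(\mathcal{O}(n))$ as an $n$-ary operation whose inputs in $S$ are \emph{new} (attached along $f$) and whose remaining inputs are \emph{old} (coming from $A$). The cardinality $t=\card(S)$ then plays exactly the role that the number $\card I^e(T)$ of inner even vertices played in Lemma \ref{pind}, and I would run the induction on $t$.

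First I would set $B_0=A$ and dispose of $t=0,1$: when $\card(S)=1$ the source $s(k_1^S\odot\cdots\odot k_n^S)$ carries a single canonical map $\kappa_i$, so the stated formula defines $\psi_1^{n,\{i\}}$ with nothing to verify. For $t>1$, assuming $B_0,\dots,B_{t-1}$ and all the maps $\psi_s^{n,S'}$, $\bar\psi_s^{n,S'}$ with $s\le t-1$ already built (the latter as the indicated push-outs), I would define $\psi_t^{n,S}$ summand by summand through the universal property of the source of an iterated $\odot$ product recorded at the end of Section \ref{mcm}, prescribing its value on $\id{z(\mathcal{O}(n))}\otimes\kappa_i$ to be $\bar\psi^{n,S\setminus\{i\}}_{t-1}$ precomposed with $\bar{g}$ inserted in the $i$-th slot. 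This lands in $B_{t-1}$ because feeding $\bar{g}$ into slot $i$ converts the new position $i$ into an old one, i.e. sends the index $(n,S)$ to $(n,S\setminus\{i\})$ of cardinality $t-1$.

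The one point that needs genuine checking is the compatibility condition making $\psi_t^{n,S}$ well defined, namely that for distinct $i,i'\in S$ the gluing square of Section \ref{mcm}---the exact analogue of square (a) in the proof of Lemma \ref{pind}---commutes; this is the step I expect to be the crux. Here the combinatorics simplifies dramatically: the $t$ new inputs are parallel slots of the \emph{same} operation $z(\mathcal{O}(n))$ and hence never interact, so only the disjoint configuration occurs and there is no analogue of the awkward adjacent case $d(u,u')=2$ of Lemma \ref{pind}. Starting from the object carrying the source of $f$ in both slots $i$ and $i'$, I would rewrite each composite of the square, using that $f$ and $\bar{g}$ act on different tensor factors and the push-out identity $\bar\psi^{n,S\setminus\{i\}}_{t-1}\circ(\odot)=\varphi_{t-1}\circ\psi^{n,S\setminus\{i\}}_{t-1}$ together with the recursion defining $\psi_{t-1}$, until both become $\varphi_{t-1}\,\bar\psi^{n,S\setminus\{i,i'\}}_{t-2}$ precomposed with $\bar{g}$ inserted in slots $i$ and $i'$. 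Since those two insertions commute, the two composites coincide, and the induction hypothesis for the index $(n,S\setminus\{i,i'\})$ of cardinality $t-2$ finishes the verification.

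It then remains only to form $\varphi_t$ as the push-out of \eqref{killo} along \eqref{killo2}, which exists since $\C C$ is cocomplete, and to iterate to obtain the claimed sequence. The genuine obstacle throughout is not conceptual but bookkeeping: one must carry along the symmetry isomorphisms of $\C C$ that permute the factor $z(\mathcal{O}(n))$, the new inputs, the old inputs and the slot being contracted---suppressed above exactly as in the statement---after which the commuting-square check reduces to the interchange law and the inductive hypothesis.
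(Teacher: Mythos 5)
Your proposal is correct and matches the paper's approach exactly: the paper omits the proof of this lemma, stating only that it is a simpler analog of Lemma \ref{pind} to be proved by the same steps, which is precisely what you execute (induction on $t$, nothing to check for $t=0,1$, then the universal property of $s(k_1^S\odot\cdots\odot k_n^S)$ with only the pairs $i,i'\in S$ giving nontrivial compatibility conditions, since the $\kappa_k$ for $k\notin S$ have initial source). Your key reduction---both composites of the compatibility square collapse, via the push-out identity for $\bar{\psi}_{t-1}$ and the recursion defining $\psi_{t-1}$, to $\varphi_{t-1}\,\bar{\psi}^{n,S\setminus\{i,i'\}}_{t-2}$ preceded by the two commuting insertions of $\bar{g}$---is the correct verification and rightly explains why no case analysis analogous to $d(u,u')=2$ in Lemma \ref{pind} arises here.
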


We now endow $$B=\colim_{t\geq 0} B_t$$ with an $\mathcal{O}$-algebra structure.

\begin{lem}\label{pond2}
There are unique morphisms in $\C C$,
$$c_n^{t_1,\dots,t_n}\colon z(\mathcal{O}(n))\otimes B_{t_1}\otimes\cdots\otimes B_{t_n}\To B_{t_1 +\cdots +t_n},\qquad n\geq 1,\; t_i\geq 0,$$
such that
$$c^{0,\st{n}\dots,0}_n=\nu_n\colon z(\mathcal{O}(n))\otimes A^{\otimes n}\To A,$$
and, with the convention $\bar{\psi}_{0}^{p_{i},S_{i}}=\nu_{p_{i}}$, if $S_i\subset\{1,\dots, p_i\}$ is a subset of cardinality $\card S_i=t_i$, $1\leq i\leq n$, then 
$$\begin{array}{l}
c_n^{t_1,\dots,t_i,\dots,t_n}(\id{}^{\otimes (i-1)}\otimes\varphi_{t_i}\otimes\id{}^{\otimes(n-i)})=
\varphi_{t_1+\cdots +t_n}c_n^{t_1,\dots,t_i-1,\dots,t_n},\\{}\\
c_n^{t_1,\dots,t_n}(\bar{\psi}_{t_1}^{p_1,S_1}\otimes\cdots\otimes \bar{\psi}_{t_n}^{p_n,S_n})\\
\quad\qquad \qquad \qquad=
\bar{\psi}_{t_1+\cdots+t_n}^{p_1+\cdots+p_n,\bigcup_{i=1}^n(S_i+(p_1+\cdots+p_{i-1}))}\left(z(\mu_{n;p_{1},\dots,p_{n}})\otimes\id{}^{\otimes\sum_{i=1}^{n}p_{i}}\right).
\end{array}$$
Here $S+p=\{i+p\,;\, i\in S\}$ and $\mu$ is the multiplication of the operad $\mathcal{O}$. For simplicity, in these equations we have omitted some obvious structure isomorphisms of $\C V$, $\C C$ and $z$.
\end{lem}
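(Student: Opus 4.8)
The plan is to construct the morphisms $c_n^{t_1,\dots,t_n}$ by induction on the tuple $(t_1,\dots,t_n)\in\mathbb{N}^n$ ordered by the graded lexicographic order, exactly as in the proof of Lemma \ref{pond}. For the minimal tuple $(0,\dots,0)$ we set $c_n^{0,\dots,0}=\nu_n$, and nothing has to be checked for tuples with $\sum_i t_i\leq 1$. For the inductive step, fix $(t_1,\dots,t_n)$ with $\sum_i t_i>1$ and assume the $c_n$ have been defined, and satisfy all the displayed identities, for every strictly smaller tuple. The source $z(\mathcal{O}(n))\otimes B_{t_1}\otimes\cdots\otimes B_{t_n}$ is a colimit assembled from the push-out presentations of the $B_{t_i}$ in Lemma \ref{pind2} together with the fact that $\otimes$ (equivalently $\odot$) preserves push-outs in each variable, Lemma \ref{podot}. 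By this universal property a morphism out of the source is determined by prescribing it on the $\varphi$-leg of each factor and on the new-cell legs, subject to agreement on the overlaps. The second and third displayed identities are precisely such a prescription: the $\varphi_{t_i}$-leg of the $i$-th factor is sent to $\varphi_{t_1+\cdots+t_n}c_n^{t_1,\dots,t_i-1,\dots,t_n}$, which is available by induction, and the fully attached pieces $\bar{\psi}_{t_i}^{p_i,S_i}$ are sent according to the explicit $z(\mu_{n;p_1,\dots,p_n})$ formula.

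Thus the only thing to verify is the compatibility condition at the overlaps, which is the verbatim analogue of conditions (a) and (b) in the proof of Lemma \ref{pond}. Agreement among the $n$ different $\varphi$-reduced prescriptions on their mutual overlaps $B_{t_i-1}\otimes B_{t_j-1}$ is automatic from the induction hypothesis applied twice. The substantial check is that, on an overlap where the $i$-th factor is $\varphi$-reduced while a new cell is attached, the reduced prescription agrees with the new-cell prescription. Unwinding the recursive definition of $\psi^{n,S}_t$ from Lemma \ref{pind2} and feeding in the induction hypothesis on the tuple obtained by decreasing $t_i$ by one, both sides collapse to a single morphism of the shape $\bar{\psi}_{t_1+\cdots+t_n}^{\,p_1+\cdots+p_n,\,\bigcup_{i=1}^{n}(S_i+(p_1+\cdots+p_{i-1}))}\bigl(z(\mu_{n;p_1,\dots,p_n})\otimes\id{}^{\otimes\sum_{i=1}^{n}p_{i}}\bigr)$, mirroring diagram \eqref{ultima1}.

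The key simplification over Lemma \ref{pond} is that there is no analogue of the connected configuration $d(u,u')=2$: the positions at which $f$ is applied in two distinct outer factors $B_{t_i}$ and $B_{t_j}$ live in disjoint tensor factors, which are merged by the single $n$-ary multiplication $z(\mu_{n;p_1,\dots,p_n})$ rather than by an iterated binary composition, so those two applications of $f$ trivially commute. Only the associativity of $\mu$ and the coherence isomorphisms of $z$ (its multiplication and braiding constraints) enter, precisely as relation (2) of Remark \ref{circi} entered in the tree case. I expect the main obstacle to be purely bookkeeping: keeping track of the relabelling $S_i\mapsto S_i+(p_1+\cdots+p_{i-1})$ and of the coherence isomorphisms reorganising $z(\mathcal{O}(n))\otimes z(\mathcal{O}(p_1))\otimes\cdots\otimes z(\mathcal{O}(p_n))$ into $z(\mathcal{O}(n)\otimes\mathcal{O}(p_1)\otimes\cdots\otimes\mathcal{O}(p_n))$, so that the compatibility square closes on the nose.

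Finally, uniqueness is immediate: the prescribed identities determine $c_n^{t_1,\dots,t_n}$ on every generating piece of the colimit presentation of $z(\mathcal{O}(n))\otimes B_{t_1}\otimes\cdots\otimes B_{t_n}$, and these pieces jointly cover the source, so any two solutions coincide.
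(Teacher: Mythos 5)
Your proposal is correct and is exactly the route the paper intends: the paper omits the proof of Lemma \ref{pond2}, stating that the results of this section are simpler analogs of those of Section \ref{ropo} whose proofs ``follow very much the same steps,'' and your induction over tuples in the graded lexicographic order, the use of Lemmas \ref{podot} and \ref{pind2} to present $z(\mathcal{O}(n))\otimes B_{t_1}\otimes\cdots\otimes B_{t_n}$ as a push-out, and the overlap compatibilities are precisely the analogues of conditions (a) and (b) in the proof of Lemma \ref{pond}. Your key observation is also right: reductions via $\bar{g}$ act on slot factors disjoint from the $z(\mathcal{O})$ factors merged by $z(\mu_{n;p_1,\dots,p_n})$, so the hard ``adjacent'' case of the operadic proof (the case $\{x,u\}\in E(T)$ in Lemma \ref{pond}, rather than the $d(u,u')=2$ case of Lemma \ref{pind} which you cite) has no counterpart here, and only the $\mu$-associativity of the algebra structure at the base of the induction and the coherence isomorphisms of $z$ are needed.
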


We define $$f'\colon A=B_{0}\To \colim_{t\geq 0}B_{t}=B$$
as the canonical morphism to the colimit. Moreover, 
for $n\geq 1$ we define
$$\nu^{B}_{n}\colon z(\mathcal{O}(n))\otimes B^{\otimes n}\To B$$
as the colimit of the morphisms $c_{n}^{t_{1},\dots,t_{n}}$ in the previous lemma, $t_{i}\geq 0$, and for $n=0$,
$$\nu_{0}^{B}\colon z(\mathcal{O}(0))\st{\nu_{0}^{A}}\To A\st{f'}\To B.$$
Furthermore,  we define $\bar{g}'\colon Z\r B$ as the following composite morphism
$$\hspace{-3pt}\xy
(0,0)*+{Z}="a",
(23,0)*+{\unit_{\C C}\otimes Z}="b",
(50,0)*+{z(\unit_{\C V})\otimes Z}="c",
(80,0)*+{z(\mathcal{O}(1))\otimes Z}="d",
(102,0)*+{B_{1}}="e",
(122,0)*+{B.}="f",
\ar"a";"b"_-\cong^-{(\text{left unit})^{-1}}
\ar"b";"c"_-\cong^-{\text{unit}\otimes\id{}}
\ar"c";"d"^-{z(u)\otimes\id{}}
\ar"d";"e"^-{\bar{\psi}_{1}^{1,\{1\}}}
\ar"e";"f"^-{\begin{array}{c}
\scriptstyle\text{projection to}\vspace{-5pt} \\
\scriptstyle\text{the colimit}
\end{array}}
\endxy$$

\begin{thm}\label{aux2}
The morphisms $\nu_{n}^{B}$, $n\geq 0$, define an $\mathcal{O}$-algebra structure on $B$, $f'\colon A\r B$ is an $\mathcal{O}$-algebra  morphism, and if $g'\colon\mathcal{F}_{\mathcal{O}}(Z)\r B$ is the adjoint of $\bar{g}'\colon Z\r B$, then the following square is a push-out in $\algebra{\mathcal{O}}{\C C}$,
$$\xymatrix@C=30pt{\mathcal{F}_{\mathcal{O}}(Y)\ar[d]_{g}\ar[r]^{\mathcal{F}_{\mathcal{O}}(f)}&\mathcal{F}_{\mathcal{O}}(Z)\ar[d]^{g'}\\
A\ar[r]^-{f'}&B}$$
\end{thm}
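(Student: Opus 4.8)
The plan is to mirror the proof of Theorem \ref{aux1} step by step, taking advantage of the fact that the algebra push-out is a genuine simplification of the operad push-out: the combinatorics of planted planar trees with leaves degenerates to the trivial bookkeeping of pairs $(n,S)$ with $S\subseteq\{1,\dots,n\}$ appearing in Lemma \ref{pind2}, and the whole family of operadic compositions $\circ_i$ is replaced by the single multiplication $\mu_{n;p_1,\dots,p_n}$ of $\mathcal{O}$. I would first check that the morphisms $\nu_n^B$ satisfy the unit and associativity diagrams of Definition \ref{eop}. The unit axiom is immediate from $c_1^0=\nu_1$ and the compatibility of the $c_n^{t_1,\dots,t_n}$ with the maps $\varphi_t$. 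For associativity one shows that the two composites $z(\mathcal{O}(m))\otimes z(\mathcal{O}(n))\otimes B^{\otimes(m+n-1)}\To B$ agree by induction on $(t_1,\dots,t_{m+n-1})\in\mathbb{N}^{m+n-1}$ with respect to the graded lexicographic order; after passing to the colimit the inductive step reduces to the second displayed identity of Lemma \ref{pond2} together with the associativity of $\mu$ in $\mathcal{O}$ and the push-out universal property of $B_t$ from Lemma \ref{pind2}. This is the exact analogue of the proposition establishing that $\mathcal{P}$ is an operad, only shorter.

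That $f'\colon A\r B$ is an $\mathcal{O}$-algebra morphism is immediate, since $\nu_n^B$ is the colimit of the $c_n^{t_1,\dots,t_n}$ and $c_n^{0,\dots,0}=\nu_n^A$. To see that the square commutes, i.e. $f'\bar g=\bar g'f$, I would chase the definitions of $\bar g'$ and $f'$ exactly as in Theorem \ref{aux1}: for $n=1$, $S=\{1\}$, $t=1$ the push-out defining $\varphi_1$ gives $\bar\psi_1^{1,\{1\}}(\id{}\otimes f)=\varphi_1\psi_1^{1,\{1\}}=\varphi_1\nu_1(\id{}\otimes\bar g)$, and precomposing with $z(u)\otimes\id{}$ and the unit isomorphisms, the unit axiom for $A$ (which identifies $\nu_1(z(u)\otimes\id{})$ with the canonical isomorphism $z(\unit_{\C V})\otimes A\cong A$) collapses $\bar g'f$ to the canonical image of $\bar g$ in $B$, which is $f'\bar g$.

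For the universal property, given an $\mathcal{O}$-algebra $B'$ with morphisms $f''\colon A\r B'$ and $\bar g''\colon Z\r B'$ such that $f''\bar g=\bar g''f$, I would build $h\colon B\r B'$ by defining $h_t\colon B_t\r B'$ inductively: set $h_0=f''$, require $h_t\varphi_t=h_{t-1}$, and impose, for every $n$ and every $S$ of cardinality $t$,
$$h_t\bar\psi_t^{n,S}=\nu_n^{B'}\bigl(\id{z(\mathcal{O}(n))}\otimes\ell_1\otimes\cdots\otimes\ell_n\bigr),\quad\text{where }\ell_i=\bar g''\text{ for }i\in S\text{ and }\ell_i=f''\text{ for }i\notin S.$$
Well-definedness is forced by the push-out universal property of $B_t$ in Lemma \ref{pind2}: precomposing the right-hand side in a slot $i\in S$ with the morphism $f$ and using the recursion for $\psi_t^{n,S}$, the hypothesis $f''\bar g=\bar g''f$, and the induction hypothesis for $h_{t-1}$, one finds it equals $h_{t-1}\psi_t^{n,S}$, exactly as in the verification of equation (b) in the proof of Theorem \ref{aux1}. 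That $h$ is a morphism of algebras amounts to $h_{t_1+\cdots+t_n}c_n^{t_1,\dots,t_n}=\nu_n^{B'}(\id{}\otimes h_{t_1}\otimes\cdots\otimes h_{t_n})$, proved by induction on $(t_1,\dots,t_n)$ using the second identity of Lemma \ref{pond2}, the fact that $f''$ preserves $\mu$, and the associativity of the action of $\mathcal{O}$ on $B'$. The equalities $hf'=f''$ and $h\bar g'=\bar g''$ follow from $h_0=f''$ and from the defining equation of $h_1$ on $\bar\psi_1^{1,\{1\}}$ combined with the unit axioms, and uniqueness holds because any competing $h'$ is forced into the same recursion.

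The main obstacle is not conceptual but lies in faithfully tracking the structure morphisms that Lemmas \ref{pind2} and \ref{pond2} suppress. Whereas in Section \ref{ropo} every reordering of tensor factors used the symmetry of the symmetric monoidal category $\C V$, here $\C C$ need not be symmetric and the analogous reorderings---moving a factor $z(\mathcal{O}(n))$ past a power $Y^{\otimes(i-1)}$, as in the associativity diagram of Definition \ref{eop}---must be performed with the braiding $\zeta$ and the multiplication and unit isomorphisms of the strong braided monoidal functor $z$. The consistency of all these insertions is precisely what the three coherence diagrams for $z$ recorded at the beginning of the Algebras section guarantee; once they are invoked, each verification becomes a routine reduction to the corresponding relation for $\mathcal{O}$ or for $B'$, and strictly easier than its tree-indexed counterpart in Section \ref{ropo}.
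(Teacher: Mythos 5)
Your proposal is correct and takes exactly the route the paper itself prescribes: the paper omits the proof of Theorem \ref{aux2}, declaring it a simpler analog of Section \ref{ropo} whose proof "follows very much the same steps," and your argument is a faithful execution of those steps --- the unit/associativity verifications mirror the proposition that $\mathcal{P}$ is an operad, the commutativity of the square and the inductive construction of $h$ via $h_t\bar\psi_t^{n,S}=\nu_n^{B'}(\id{}\otimes\ell_1\otimes\cdots\otimes\ell_n)$ mirror equation (b) and the universal-property check in Theorem \ref{aux1}, with trees replaced by pairs $(n,S)$. Your closing remark is also the right caveat: the reorderings that used the symmetry of $\C{V}$ in Section \ref{ropo} must here be performed with the braiding $\zeta$ and the coherence isomorphisms of the central functor $z$, which is exactly what the paper's phrase "we have omitted some obvious structure isomorphisms of $\C{V}$, $\C{C}$ and $z$" in Lemma \ref{pond2} is suppressing.
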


\section{Proof of Theorems \ref{elt2} and \ref{elt3}}

Suppose that we are in the same conditions as in the two previous sections. Assume also that $\C V$ and $\C C$ are  monoidal model categories (see Definition \ref{defm}) and that the composite functor $$\C{V}\st{z}\To Z(\C{C})\st{\text{forget}}\To\C{C}$$ is a left Quillen functor \cite[Definition 1.3.1]{hmc}. We will need a non-symmetric version of the monoid axiom in Definition \ref{max}.

\begin{defn}\label{nsmax}
The \emph{monoid axiom} for $\C{C}$ says that, for
\begin{align*}
K'={}&\{f_{1}\odot\cdots \odot f_{n}\;;\;n\geq 1,\, S\subset\{1,\dots,n\}\text{ is a subset with }\card S\geq 1, \,
\\&
\qquad\qquad \qquad\quad f_{i}\text{ is a trivial cofibration if }i\in S,\\
& \qquad\qquad \qquad\quad f_{i}\colon 0\r X_{i} \text{ for some object }X_{i}\text{ in }\C{C}\text{ if }i\notin S\},
\end{align*}
all relative $K'$-cell complexes are weak equivalences.
\end{defn}

Notice that, as a consequence of the push-out product axiom, this is indeed equivalent to the monoid axiom in Definition \ref{max} when $\C{C}$ is symmetric. In any case, if all objects in $\C{C}$ are cofibrant then the monoid axiom  is a consequence of the push-out product axiom.

Suppose from now on that $\C C$ satisfies the monoid axiom and is cofibrantly generated with sets of generating cofibrations and generating trivial cofibrations $I$ and~$J$, respectively, with presentable sources.

\begin{prop}\label{1y2}
Consider a push-out diagram in $\algebra{\mathcal{O}}{\C C}$ as follows.
 $$\xymatrix{\mathcal{F}_{\mathcal{O}}(Y)\ar[d]_{g}\ar[r]^{\mathcal{F}_{\mathcal{O}}(f)}\ar@{}[rd]|{\text{push}}&\mathcal{F}_{\mathcal{O}}(Z)\ar[d]^{g'}\\
A\ar[r]^-{f'}&B}$$
\begin{enumerate}
\item If $f$ is a trivial cofibration in $\C C$, then the underlying morphism $f'\colon A\r B$ in $\C C$ is a relative $K'$-cell complex, where $K'$ is the class in Definition \ref{nsmax}.

\item Suppose $A$ is cofibrant in $\C C$, $f$ is a cofibration in $\C C$, and  $\mathcal{O}(n)$ is cofibrant in $\C V$, $n\geq 0$. Then the morphism $f'\colon A\r B$ is a cofibration in $\C C$, in particular $B$ is cofibrant in $\C C$.
\end{enumerate}
\end{prop}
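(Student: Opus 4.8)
The plan is to read both statements off the explicit construction of the push-out $B$ provided by Theorem \ref{aux2} and Lemma \ref{pind2}. There $B=\colim_{t\geq 0}B_t$ with $B_0=A$, so the underlying morphism $f'\colon A\to B$ in $\C C$ is the transfinite composition of the maps $\varphi_t\colon B_{t-1}\to B_t$, and each $\varphi_t$ is a push-out in $\C C$ of the coproduct \eqref{killo} of the morphisms $z(\mathcal{O}(n))\otimes(k_1^S\odot\cdots\odot k_n^S)$ with $\card(S)=t$. Since a push-out of a coproduct of maps in $K'$ is a relative $K'$-cell complex, and since the class of cofibrations is closed under coproducts, push-outs and transfinite composition, it suffices to analyse a single such generating morphism.

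First I would record the identity that absorbs $z(\mathcal{O}(n))$ into the $\odot$ product. From the definition of $\odot$ in Section \ref{mcm} together with $0\otimes(-)=0$ (valid because $\otimes$ preserves colimits in each variable), one has $(0\to W)\odot h=W\otimes h$ for every object $W$ and morphism $h$; using associativity of $\odot$ repeatedly this gives
$$z(\mathcal{O}(n))\otimes(k_1^S\odot\cdots\odot k_n^S)=(0\to z(\mathcal{O}(n)))\odot k_1^S\odot\cdots\odot k_n^S.$$
For part (1) the map $f$ is a trivial cofibration, hence $k_i^S=f$ is a trivial cofibration for each $i\in S$, while the factors $k_i^S=(0\to A)$ with $i\notin S$ and the new factor $0\to z(\mathcal{O}(n))$ are morphisms out of the initial object. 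Because $\card(S)=t\geq 1$, the right-hand side is exactly of the shape prescribed in Definition \ref{nsmax}, so it belongs to $K'$. Thus every $\varphi_t$, and therefore $f'$, is a relative $K'$-cell complex.

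For part (2) the map $f$ is only a cofibration, but the extra hypotheses make every factor of the $\odot$ product above a cofibration: $A$ is cofibrant, so $0\to A$ is a cofibration, and $\mathcal{O}(n)$ is cofibrant in $\C V$ while the composite $\C V\st{z}\To Z(\C C)\To\C C$ is left Quillen and therefore preserves the initial object and cofibrations, so $0\to z(\mathcal{O}(n))$ is a cofibration. The push-out product axiom (Definition \ref{defm}) then shows that $z(\mathcal{O}(n))\otimes(k_1^S\odot\cdots\odot k_n^S)$ is a cofibration, whence each $\varphi_t$ and their transfinite composition $f'$ are cofibrations; finally $B$ is cofibrant since $0\to A\to B$ is a composite of cofibrations. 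I do not anticipate a serious obstacle: the construction in Lemma \ref{pind2} has been set up so precisely that the only things to check are the harmless absorption identity and the bookkeeping that $\card(S)\geq 1$ holds automatically once $t\geq 1$, so that the generating maps land in $K'$ in the first case and are built from cofibrations in the second.
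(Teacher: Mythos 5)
Your proof is correct and follows essentially the same route as the paper's: both read $f'$ off the filtration of Lemma \ref{pind2} and Theorem \ref{aux2} as a transfinite composition of push-outs of \eqref{killo}, place each summand in $K'$ via the absorption identity $(0\to z(\mathcal{O}(n)))\odot h \cong z(\mathcal{O}(n))\otimes h$ for part (1), and invoke cofibrancy of $z(\mathcal{O}(n))$ (from $z$ being left Quillen) plus the push-out product axiom and closure properties for part (2). The only difference is that you spell out the bookkeeping (the absorption identity, $\card S=t\geq 1$, closure of cofibrations under coproducts, push-outs and transfinite composition) that the paper leaves implicit.
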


\begin{proof}
In case (1), the morphism \eqref{killo} in Lemma \ref{pind2} is in $K'$, hence (1) follows from Theorem \ref{aux2}.

In case (2), since~$z$ is a left Quillen functor,  the objects $z(\mathcal{O}(n))$ are cofibrant in~$\C C$.  Therefore, by the push-out product axiom (Definition \ref{defm}) the morphism \eqref{killo} is a cofibration in $\C C$. Furthermore, by Theorem \ref{aux2} the morphism  $f'\colon A\r B$ is a transfinite composition of cofibrations in~$\C C$, hence  a cofibration in $\C C$ itself  \cite[Proposition 10.3.4]{hirschhorn}.
\end{proof}

As an immediate consequence of (1)  here and the monoid axiom, we obtain the following result.

\begin{cor}\label{ayyy2}
A morphism in~$\C{C}$ underlying a relative $\mathcal{F}_{\mathcal{O}}(J)$-cell complex in $\algebra{\mathcal{O}}{\C C}$ is a weak equivalence in~$\C{C}$.
\end{cor}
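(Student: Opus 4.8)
The plan is to unwind the definition of a relative $\mathcal{F}_{\mathcal{O}}(J)$-cell complex and deduce the statement from Proposition \ref{1y2}~(1) together with the monoid axiom for $\C C$. By definition, such a cell complex is a continuous transfinite composition $A_0\to\colim_{i<\lambda}A_i$ in $\algebra{\mathcal{O}}{\C C}$, each of whose successor morphisms $A_i\to A_{i+1}$ fits in a push-out square whose top arrow lies in $\mathcal{F}_{\mathcal{O}}(J)$, i.e.\ is of the form $\mathcal{F}_{\mathcal{O}}(f_i)$ with $f_i$ a generating trivial cofibration of $\C C$. I want to show that the underlying morphism $A_0\to\colim_{i<\lambda}A_i$ in $\C C$ is a weak equivalence.

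First I would note that each $f_i$ is in particular a trivial cofibration in $\C C$, so Proposition \ref{1y2}~(1) applies to the $i^{\text{th}}$ push-out square (which has the shape of the square in Theorem \ref{aux2}) and shows that the underlying morphism $A_i\to A_{i+1}$ in $\C C$ is a relative $K'$-cell complex, $K'$ being the class of Definition \ref{nsmax}. Next I would pass to the colimit: the forgetful functor $\algebra{\mathcal{O}}{\C C}\to\C C$ preserves filtered colimits, for the same reason as in the operadic case exploited in the proof of Theorem \ref{elt}, and a continuous $\lambda$-sequence is a filtered colimit. Consequently the underlying diagram is again a continuous $\lambda$-sequence in $\C C$ and the underlying morphism of the cell complex is exactly the transfinite composition in $\C C$ of the morphisms $A_i\to A_{i+1}$. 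Since each of these is a relative $K'$-cell complex, the composite can be reindexed into a single continuous transfinite composition of push-outs of morphisms in $K'$, hence is itself a relative $K'$-cell complex. The monoid axiom (Definition \ref{nsmax}) then guarantees that it is a weak equivalence, as desired.

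The only point requiring real care, and the one I would spell out, is the preservation of the transfinite composition by the forgetful functor. This rests on the free $\mathcal{O}$-algebra monad $Y\mapsto\coprod_{p\geq0}z(\mathcal{O}(p))\otimes Y^{\otimes p}$ commuting with filtered colimits: the coproduct and $z$ (a left adjoint, since $z$ composed with the inclusion is left Quillen) preserve all colimits, $\otimes$ preserves colimits in each variable, and the $p$-fold tensor power preserves filtered colimits because the diagonal of a filtered category is cofinal. Granting this, the remaining ingredient — that relative $K'$-cell complexes are closed under transfinite composition — is entirely routine.
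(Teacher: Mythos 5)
Your proof is correct and takes essentially the same route as the paper, which deduces the corollary as an immediate consequence of Proposition \ref{1y2}~(1) and the monoid axiom (Definition \ref{nsmax}). The details you spell out --- that the forgetful functor preserves transfinite compositions (since the free $\mathcal{O}$-algebra monad preserves filtered colimits) and that relative $K'$-cell complexes are closed under transfinite composition --- are exactly the routine steps the paper leaves implicit in its one-line proof.
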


Now we are ready to prove Theorem \ref{elt2}.

\begin{proof}[Proof of Theorem \ref{elt2}]
Using the explicit description of the free operad adjunction at the beginning of Section \ref{rapo}, it is easy to see that $\mathcal{O}$-algebras are the same thing as algebras over the monad associated to the free $\mathcal{O}$-algebra adjunction. Moreover, this monad preserves filtered colimits, see again the explicit construction, therefore the category $\algebra{\mathcal{O}}{\C C}$ is complete and cocomplete \cite[Proposition 4.3.6]{borceux2}. Furthermore, 
the forgetful functor $\algebra{\mathcal{O}}{\C C}\r\C{C}$ also preserves filtered colimits \cite[Proposition 4.3.2]{borceux2}, in particular, since $\mathcal{F}_{\mathcal{O}}$ is a left adjoint and sources of morphisms in $I$ and $J$ are presentable in $\C{C}$, then sources of morphisms in $\mathcal{F}_{\mathcal{O}}(I)$ and $\mathcal{F}_{\mathcal{O}}(J)$ are presentable in~$\algebra{\mathcal{O}}{\C C}$.


We can apply \cite[Lemma 2.3]{ammmc} in order to prove the existence of the claimed model structure in $\algebra{\mathcal{O}}{\C C}$.  The smallness condition has already been checked, and condition (1) of \cite[Lemma 2.3]{ammmc} has been established in Corollary \ref{ayyy2}. 

The statement about right properness is obvious since fibrations and weak equivalences in $\algebra{\mathcal{O}}{\C C}$ are detected by the forgetful functor $\algebra{\mathcal{O}}{\C C}\r\C{C}$, and this functor is a right adjoint, so it preserves all limits, in particular pull-backs.

If $ {\C C}$ is combinatorial then $\algebra{\mathcal{O}}{\C C}$ is locally presentable by \cite[2.3 (1) and the Theorem in 2.78]{adamekrosicky}, hence it is combinatorial. 
\end{proof}

\begin{lem}\label{cofc}
Suppose that $\mathcal{O}$ is an operad in $\C V$ with $\mathcal{O}(n)$ cofibrant  for all $n\geq 0$. 
Then any cofibrant $\mathcal{O}$-algebra is also cofibrant as an object in $\C{C}$. 
\end{lem}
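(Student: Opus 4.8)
The plan is to reduce the statement to cell complexes and then argue by transfinite induction on the cell attachments, invoking Proposition \ref{1y2}(2) at each step. Recall from the proof of Theorem \ref{elt2} that $\mathcal{F}_{\mathcal{O}}(I)$ is a set of generating cofibrations for $\algebra{\mathcal{O}}{\C C}$, so any cofibrant $\mathcal{O}$-algebra $A$ is a retract, in the category of $\mathcal{O}$-algebras under the initial one, of a relative $\mathcal{F}_{\mathcal{O}}(I)$-cell complex $\bar{A}$ whose base is the initial $\mathcal{O}$-algebra. Since the forgetful functor $\algebra{\mathcal{O}}{\C C}\r\C{C}$ preserves retracts and a retract in $\C C$ of a cofibrant object is cofibrant, it suffices to prove that the underlying object of $\bar{A}$ is cofibrant in $\C C$; in fact I would show that the structural morphism from the initial algebra to $\bar{A}$ is a cofibration in $\C C$.

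For the base case, the initial $\mathcal{O}$-algebra is $z(\mathcal{O}(0))$ by Remark \ref{alal}. As $z$ is left Quillen it preserves the initial object and cofibrations, so the cofibrancy of $\mathcal{O}(0)$ in $\C V$ yields that $0_{\C C}=z(0_{\C V})\r z(\mathcal{O}(0))$ is a cofibration, that is, $z(\mathcal{O}(0))$ is cofibrant in $\C C$.

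I would then run a transfinite induction along $\bar{A}$. Each successor step is a push-out in $\algebra{\mathcal{O}}{\C C}$ along $\mathcal{F}_{\mathcal{O}}(f)$ for some generating cofibration $f\in I$ of $\C C$; assuming the current stage is cofibrant in $\C C$, Proposition \ref{1y2}(2) applies, since every $\mathcal{O}(n)$ is cofibrant, and shows that the attaching map is a cofibration in $\C C$ and the new stage is again cofibrant. At limit ordinals the transition maps are computed in $\C C$ as filtered colimits, which the forgetful functor preserves (as observed in the proof of Theorem \ref{elt2}); hence the underlying diagram of $\bar{A}$ is a genuine transfinite composition of cofibrations in $\C C$ issuing from the cofibrant object $z(\mathcal{O}(0))$. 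Such a transfinite composition is a cofibration, so its colimit, the underlying object of $\bar{A}$, is cofibrant in $\C C$, and the structural morphism $z(\mathcal{O}(0))\r\bar{A}$ is a cofibration.

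Combining these facts, $z(\mathcal{O}(0))\r A$ is a retract of the cofibration $z(\mathcal{O}(0))\r\bar{A}$, hence a cofibration in $\C C$; precomposing with $0_{\C C}\r z(\mathcal{O}(0))$ shows that $A$ is cofibrant in $\C C$. The only delicate point is the behaviour at limit ordinals, which is precisely where preservation of filtered colimits by the forgetful functor is needed; all the successor steps are immediate consequences of Proposition \ref{1y2}(2).
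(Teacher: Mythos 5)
Your proof is correct and takes essentially the same route as the paper's: reduce via retracts to showing that $\mathcal{F}_{\mathcal{O}}(I)$-cell complexes are cofibrant in $\C{C}$, note that the initial algebra $z(\mathcal{O}(0))$ is cofibrant because $z$ is left Quillen and $\mathcal{O}(0)$ is cofibrant, and then induct over the cell attachments using Proposition \ref{1y2} (2). The paper compresses the transfinite induction into a single sentence; your write-up merely makes explicit the successor step and the limit-ordinal step (where preservation of filtered colimits by the forgetful functor guarantees the underlying diagram is a genuine transfinite composition of cofibrations in $\C{C}$), which is exactly what the paper's ``induction argument'' implicitly relies on.
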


\begin{proof}
Cofibrant $\mathcal{O}$-algebras are retracts of $\mathcal{F}_{\mathcal{O}}(I)$-cell complexes, and  cofibrant objects in $\C C$ are closed under retracts, so it is enough to check that  $\mathcal{F}_{\mathcal{O}}(I)$-cell complexes are cofibrant in $\C C$. The initial $\mathcal{O}$-algebra in $\C C$ (see Remark \ref{alal}) is cofibrant in $\C C$, since $\mathcal{O}(0)$ is cofibrant in $\C V$ and $z$ is a left Quillen functor. Using Proposition \ref{1y2} (2), an induction argument proves that any  $\mathcal{F}_{\mathcal{O}}(I)$-cell complex is cofibrant in $\C C$.

\end{proof}

\begin{cor}\label{ayyy3}
Let $\mathcal{O}$ be an operad in $\C V$ with $\mathcal{O}(n)$ cofibrant  for all $n\geq 0$. 
Then, the forgetful functor $\algebra{\mathcal{O}}{\C C}\r \C{C}$ preserves cofibrations with cofibrant source.
\end{cor}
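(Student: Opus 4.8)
The plan is to reduce the statement to the analysis of a single cell attachment, already carried out in Proposition \ref{1y2} (2), by running a transfinite induction along a relative $\mathcal{F}_{\mathcal{O}}(I)$-cell complex. Recall that, by the construction of the model structure in Theorem \ref{elt2} via \cite[Lemma 2.3]{ammmc}, the set $\mathcal{F}_{\mathcal{O}}(I)$ is a set of generating cofibrations for $\algebra{\mathcal{O}}{\C C}$. Hence, given a cofibration $f\colon A\to B$ in $\algebra{\mathcal{O}}{\C C}$ with $A$ cofibrant, the standard retract argument (factor $f$ as a relative $\mathcal{F}_{\mathcal{O}}(I)$-cell complex $i\colon A\to B'$ followed by a trivial fibration $p\colon B'\to B$, then lift $f$ against $p$) exhibits $f$ as a retract of $i$ relative to $A$, the retraction being the identity on the common source $A$. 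Since the forgetful functor $U\colon\algebra{\mathcal{O}}{\C C}\to\C C$ preserves retracts and cofibrations in $\C C$ are closed under retracts, it suffices to prove that $U(i)\colon A\to B'$ is a cofibration in $\C C$.

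Write $i$ as a transfinite composition $A=B_0\to B_1\to\cdots\to B_\lambda=B'$ in which each successor map $B_\alpha\to B_{\alpha+1}$ is a push-out in $\algebra{\mathcal{O}}{\C C}$ of some $\mathcal{F}_{\mathcal{O}}(f_\alpha)$ with $f_\alpha\in I$. I would then prove by transfinite induction on $\alpha$ the conjunction of two statements: $B_\alpha$ is cofibrant in $\C C$, and $U(B_0\to B_\alpha)$ is a cofibration in $\C C$. The base case $\alpha=0$ is exactly Lemma \ref{cofc}, which gives that the cofibrant $\mathcal{O}$-algebra $A$ is cofibrant as an object of $\C C$. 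For the successor step, $f_\alpha$ is a cofibration in $\C C$, $B_\alpha$ is cofibrant in $\C C$ by the inductive hypothesis, and $\mathcal{O}(n)$ is cofibrant for all $n$ by assumption; thus Proposition \ref{1y2} (2) applies to the push-out square (whose construction is Theorem \ref{aux2}) and yields simultaneously that $U(B_\alpha\to B_{\alpha+1})$ is a cofibration in $\C C$ and that $B_{\alpha+1}$ is cofibrant in $\C C$, so that composing with the inductive hypothesis keeps $U(B_0\to B_{\alpha+1})$ a cofibration. At a limit ordinal, $U$ preserves the (filtered) colimit defining $B_\alpha$, so the $\lambda$-sequence $U(B_\bullet)$ is continuous with cofibrations as successor maps; hence $U(B_0\to B_\alpha)$ is a transfinite composition of cofibrations, a cofibration by \cite[Proposition 10.3.4]{hirschhorn}, and $B_\alpha$ is cofibrant since it receives a cofibration from the cofibrant object $A$. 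Taking $\alpha=\lambda$ shows $U(i)$ is a cofibration in $\C C$, which completes the argument.

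The only delicate point is the bookkeeping in the induction rather than any new analytic input. Proposition \ref{1y2} (2) requires a cofibrant source at every stage, so it is essential that the very same proposition also returns the cofibrancy of the target; this is precisely why its statement records both conclusions, and it is what makes the induction self-propagating. Once this is observed, there is no further obstacle, since all the substantive work — that a single attachment of a free cell $\mathcal{F}_{\mathcal{O}}(f_\alpha)$ along a cofibrant $\mathcal{O}$-algebra has underlying morphism a cofibration in $\C C$ — is already supplied by Proposition \ref{1y2} (2), built on the explicit push-out description in Theorem \ref{aux2} and the hypothesis that $z$ is left Quillen.
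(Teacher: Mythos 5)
Your proof is correct and follows essentially the same route as the paper's: the paper's (much terser) argument likewise combines Lemma \ref{cofc} for the base case, Proposition \ref{1y2} (2) for cell attachments, preservation of filtered colimits by the forgetful functor for the limit stages, and closure of cofibrations in $\C C$ under transfinite composition and retracts. Your write-up merely makes explicit the retract argument and the transfinite induction that the paper leaves implicit.
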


\begin{proof}
This is an immediate consequence of Lemma \ref{cofc} and Proposition \ref{1y2} (2), since cofibrations in $\algebra{\mathcal{O}}{\C C}$ are retracts of relative $\mathcal{F}_{\mathcal{O}}(I)$-cell complexes, the forgetful functor preserves filtered colimits, and cofibrations in $\C C$ are closed under transfinite compositions and retracts.
\end{proof}

\begin{lem}\label{cofq}
Under the hypotheses of Theorem \ref{elt3}, suppose that we have a push-out diagram in $\algebra{\mathcal{O}}{\C C}$, 
$$\xymatrix@C=35pt{\mathcal{F}_{\mathcal{O}}(Y)\ar[d]_{g}\ar@{ >->}[r]^{\mathcal{F}_{\mathcal{O}}(f)}\ar@{}[rd]|{\text{push}}&\mathcal{F}_{\mathcal{O}}(Z)\ar[d]^{g'}\\
A\ar@{ >->}[r]^-{f'}&B}$$
where $f$ is a cofibration in $\C C$ and $A$ is a cofibrant $\mathcal{O}$-algebra. 
If the unit of the adjunction evaluated at $A$ is a weak equivalence $\eta_A\colon A\st{\sim}\r \phi^*\phi_{*}A$, then it is also a weak equivalence  when evaluated at $B$, $\eta_B\colon B\st{\sim}\r \phi^*\phi_{*}B$. 
\end{lem}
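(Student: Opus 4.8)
The plan is to exploit that $\phi_{*}$ is a left adjoint, so it preserves the push-out \eqref{po2}, and that $\phi_{*}\mathcal{F}_{\mathcal{O}}=\mathcal{F}_{\mathcal{P}}$ (both are left adjoint to the functor $\algebra{\mathcal{P}}{\C C}\r\C C$ forgetting down to $\C C$, since $\phi^{*}$ is the identity on underlying objects). Hence the $\mathcal{P}$-algebra $\phi_{*}B$ is the push-out of $\mathcal{F}_{\mathcal{P}}(f)$ along $\phi_{*}g\colon\mathcal{F}_{\mathcal{P}}(Y)\r\phi_{*}A$, and Lemma \ref{pind2} applies to \emph{both} push-outs: it produces a filtration $A=B_{0}\r B_{1}\r\cdots$ with colimit $B$, whose $t$-th layer is a push-out of the coproduct of cells \eqref{killo}, built from $z(\mathcal{O}(n))\otimes(k_{1}^{S}\odot\cdots\odot k_{n}^{S})$, along \eqref{killo2}; and a parallel filtration $\phi_{*}A=(\phi_{*}B)_{0}\r(\phi_{*}B)_{1}\r\cdots$ with colimit $\phi_{*}B$, whose layers use instead the cells $z(\mathcal{P}(n))\otimes(k_{1}^{S}\odot\cdots\odot k_{n}^{S})$. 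Since $\phi^{*}$ is the identity on underlying objects and preserves filtered colimits, $\phi^{*}\phi_{*}B=\colim_{t}(\phi_{*}B)_{t}$ as an object of $\C C$.

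Next I would produce a ladder comparing the two filtrations. The operad morphism $\phi$ induces $z(\phi(n))\colon z(\mathcal{O}(n))\r z(\mathcal{P}(n))$, which together with $\eta_{A}=\eta_{0}\colon B_{0}\r(\phi_{*}B)_{0}$ turns the inductive recipe of Lemma \ref{pind2} into a morphism of the two constructions. Concretely, naturality of the unit $\eta$ together with the defining formulas in Lemmas \ref{pind2}, \ref{pond2} and Theorem \ref{aux2} yield compatible morphisms $\eta_{t}\colon B_{t}\r(\phi_{*}B)_{t}$ in $\C C$ commuting with the structure maps $\varphi_{t}$ and assembling, in the colimit, to $\eta_{B}$. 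In each filtration the step $\varphi_{t}$ is a push-out of the relevant coproduct of cells, and the vertical comparison between the two cells is $z(\phi(n))\otimes\id{}$ on both the source $s(k_{1}^{S}\odot\cdots\odot k_{n}^{S})$ and the target $Z^{\otimes\card S}\otimes A^{\otimes(n-\card S)}$.

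I would then show each $\eta_{t}$ is a weak equivalence by induction on $t$. The base case $t=0$ is $\eta_{A}$, a weak equivalence by hypothesis. For the step, $z$ is left Quillen and each $\phi(n)\colon\mathcal{O}(n)\r\mathcal{P}(n)$ is a weak equivalence between cofibrant objects of $\C V$, so by Ken Brown's lemma $z(\phi(n))$ is a weak equivalence between cofibrant objects of $\C C$; tensoring with the cofibrant objects $s(k_{1}^{S}\odot\cdots\odot k_{n}^{S})$ and $Z^{\otimes\card S}\otimes A^{\otimes(n-\card S)}$ (cofibrant by the push-out product axiom of Definition \ref{defm}, using cofibrancy of $A$, $Y$ and $Z$) keeps it a weak equivalence. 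With $\eta_{t-1}$ a weak equivalence by induction and the cells cofibrations, the gluing lemma in the left proper category $\C C$ (Lemma \ref{wedot}, \cite[Proposition 13.5.4]{hirschhorn}) gives that $\eta_{t}$ is a weak equivalence. Finally the $\varphi_{t}$ are cofibrations, as in Proposition \ref{1y2}(2), so $\eta_{B}=\colim_{t}\eta_{t}$ is a weak equivalence, being a map of sequential colimits of cofibrations that is a levelwise weak equivalence.

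The main obstacle is the second paragraph: upgrading the operad weak equivalence $\phi$ to an honest morphism between the two very explicit inductive push-out constructions, so that the layers compare exactly through $z(\phi(n))\otimes\id{}$ and $\colim_{t}\eta_{t}$ is genuinely the unit $\eta_{B}$. This forces one to trace $\eta$ carefully through the formulas of Lemmas \ref{pind2}--\ref{pond2} and Theorem \ref{aux2}. By contrast, once the ladder is in place the homotopical input — Ken Brown's lemma, the gluing lemma, and sequential colimits of weak equivalences along cofibrations — is routine. One should also record the cofibrancy of $Y$ and $Z$ used above, which in the application to Theorem \ref{elt3} holds because there $f$ is a cofibration between cofibrant objects.
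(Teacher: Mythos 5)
Your proposal reproduces the paper's proof essentially step by step: the same transport of the push-out along $\phi_{*}$ via $\phi_{*}\mathcal{F}_{\mathcal{O}}\cong\mathcal{F}_{\mathcal{P}}$, the same ladder $\eta_{t}\colon B_{t}\r C_{t}$ between the two filtrations of Lemma \ref{pind2} (cells compared through $z(\phi(n))\otimes\id{}$ and $\eta_{0}=\eta_{A}$), the same inductive step via Ken Brown's lemma plus the gluing property in the left proper category $\C{C}$ --- which is exactly what Lemma \ref{wedot} packages --- and the same passage to the colimit. The paper makes your last step precise by viewing the ladder as a weak equivalence between cofibrant objects in the Reedy model category of $\mathbb{N}$-indexed directed diagrams and applying Ken Brown's lemma to the left Quillen functor $\colim_{t\geq 0}$; that is the rigorous form of your ``levelwise weak equivalence of sequential colimits along cofibrations'' step.

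One correction to your closing remark, however. In the application inside the proof of Theorem \ref{elt3}, $f$ runs over the generating cofibrations $I$ of $\C{C}$, whose sources are only assumed \emph{presentable}, not cofibrant, so the cofibrancy of $Y$ and $Z$ is not available there and cannot be ``recorded'' as you suggest. You have in fact put your finger on a point the paper itself elides: the identity squares on the factors $k_{i}^{S}=f$ ($i\in S$) do not literally satisfy the hypothesis of Lemma \ref{wedot} that the columns be weak equivalences between \emph{cofibrant} objects, and the paper applies that lemma citing only the cofibrancy of $z(\mathcal{O}(n))$, $z(\mathcal{P}(n))$, $A$ and $\phi^{*}\phi_{*}A$. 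So on this point your argument is in exactly the same position as the paper's; just do not claim that cofibrancy of $Y$ and $Z$ comes for free from the setting of Theorem \ref{elt3}.
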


\begin{proof}
Since $\phi_{*}$ is left adjoint to $\phi^{*}$, which is the identity on the underlying object in~$\C C$,  there is a natural isomorphism $\phi_{*}\mathcal{F}_{\mathcal{O}}\cong \mathcal{F}_{\mathcal{P}}$ that we regard as an identification, and  the morphism $\phi_{*}(f')$ fits into the following push-out diagram in $\algebra{\mathcal{P}}{\C C}$, 
$$\xymatrix@C=35pt{\mathcal{F}_{\mathcal{P}}(Y)\ar[d]_{ \phi_{*}(g)}\ar@{ >->}[r]^{\mathcal{F}_{\mathcal{P}}(f)}\ar@{}[rd]|{\text{push}}&\mathcal{F}_{\mathcal{P}}(Z)\ar[d]^{ \phi_{*}(g')}\\
\phi_{*}A\ar@{ >->}[r]^-{ \phi_{*}(f')}&\phi_{*}B}$$
The $\mathcal{O}$-algebra $A$ is cofibrant and $\phi_{*}$ is a left Quillen functor, therefore $\phi_{*}A$ is a cofibrant $\mathcal{P}$-algebra, in particular, both $A$ and $\phi_{*}A$ are cofibrant in $\C C$ by Lemma~\ref{cofc}. Notice that the underlying object of $\phi_{*}A$ and $\phi^{*}\phi_{*}A$ in $\C C$ is the same.

Let us call $C=\phi^{*}\phi_{*}B$. By Lemma \ref{pind2},  the morphism in $\C C$ underlying $\eta_{B}$ is the colimit in 
$t\in \mathbb{N}$ of an inductively constructed diagram of cofibrant objects in~$\C C$, $t>0$,
\begin{equation*}\tag{a}
\xymatrix{
\cdots\into B_{t-1}\ar@{ >->}[r]^-{\varphi_t^{B}}\ar@<2.5ex>[d]_-{\eta_{t-1}}^{\sim}&B_t\ar@<-3.2ex>[d]^-{\eta_t}_-{\sim}\into\cdots\\
\cdots\into C_{t-1}\ar@{ >->}[r]^-{\varphi^C_t}&C_t\into\cdots
}
\end{equation*}
such that $B_{0}=A$, $C_{0}=\phi^{*}\phi_{*}A$, $\eta_{0}=\eta_{A}$, the morphism $\eta_t$ is the push-out of the horizontal lines of the following diagram
$$\xymatrix@C=130pt{
B_{t-1}\ar[d]_{\eta_{t-1}}^{\sim}&\bullet\ar[d]_{\text{induced by $\phi$ and $\eta_{0}$}}^\sim
\ar[l]_{\text{\eqref{killo2} for }\mathcal{O}}\ar@{ >->}[r]^{\text{\eqref{killo} for }\mathcal{O}}
&\bullet\ar[d]_{\text{induced by $\phi$ and $\eta_{0}$}}^\sim\\
C_{t-1}&\bullet
\ar[l]^{\text{\eqref{killo2} for }\mathcal{P}}\ar@{ >->}[r]_{\text{\eqref{killo} for }\mathcal{P}}
&\bullet
}$$
and $\varphi_{t}^{B}$ and $\varphi_{t}^{C}$ are the natural morphisms to the push-out. 

The objects $\mathcal{O}(n)$ and $\mathcal{P}(n)$ are cofibrant in $\C V$ and $z$ is a left Quillen functor, hence $z(\mathcal{O}(n))$ and $z(\mathcal{P}(n))$ are cofibrant in $\C C$, $n\geq 0$. Moreover, $f$ is a cofibration in $\C C$ and $A$ and $\phi^{*}\phi_{*}A$ are cofibrant in $\C C$. Therefore Lemma \ref{wedot} shows that the square on the right has weak equivalences in the columns and cofibrations in the rows. In particular,  $\varphi_{t}^{B}$ and $\varphi_{t}^{C}$ are cofibrations in $\C C$ and, by the gluing property in left proper model  categories \cite[Proposition 13.5.4]{hirschhorn}, $\eta_{t}$ is a weak equivalence in $\C C$. 

To conclude, $\eta_{B}=\colim_{t\geq 0}\eta_{t}$ is a weak equivalence in $\C C$ since  (a) is a weak equivalence between  cofibrant objects in the Reedy model category of directed diagrams in $\C C$ indexed by $\mathbb{N}$ \cite[Theorem 5.1.3]{hmc} and Ken Brown's lemma \cite[Lemma 1.1.12]{hmc} applies, because $\colim_{t\geq 0}$ is a left Quillen functor \cite[Corollary 5.1.6]{hmc}.
\end{proof}

Finally,  we are ready to prove Theorem \ref{elt3}.

\begin{proof}[Proof of Theorem \ref{elt3}]
We will use the criterion in \cite[Corollary 1.3.16 (c)]{hmc} to detect Quillen equivalences. The functor $\phi^*$ preserves and reflects weak equivalences, since it is the identity on the underlying object in $\C C$. Therefore, it is enough to check that the unit of the adjunction $\eta_A\colon A\r \phi^*\phi_{*}A$ is a weak equivalence for any cofibrant $\mathcal{O}$-algebra $A$. 

Weak equivalences are closed under retracts and cofibrant $\mathcal{O}$-algebras are retracts of $\mathcal{F}_{\mathcal{O}}(I)$-cell complexes, so we can suppose that $A$ is an $\mathcal{F}_{\mathcal{O}}(I)$-cell complex, $A=\colim_{i<\lambda}A_{i}$. We now proceed by induction on the ordinal $\lambda$.

For $\lambda =1$, $A$ is the initial $\mathcal{O}$-algebra, see Remark \ref{alal}. Then $\phi_{*}A$ is the initial $\mathcal{P}$-algebra, since $\phi_{*}$ is a left adjoint, and $\eta_{A}=z(\phi(0))\colon z(\mathcal{O}(0))\r z(\mathcal{P}(0))$. The morphism $\phi(0)$ is a weak equivalence between cofibrant objects in $\C V$ and $z$ is a left Quillen functor, therefore $z(\phi(0))$ is also a weak equivalence between cofibrant objects in $\C C$ by \cite[Lemma 1.1.12]{hmc}. 

If $\lambda=\alpha+1$ and the result is true for $\alpha$, then it is also true for $\lambda$ by the previous lemma. 

Suppose now that  $\lambda$ is a limit ordinal and that the result is true for all $i <\lambda$. The functor $\phi_{*}$ preserves colimits, since it is a left adjoint, and $\phi^{*}$ preserves filtered colimits, because it is the identity over $\C C$ and forgetful functors from algebras to~$\C C$ preserve filtered colimits. In particular $\eta_{A}=\colim_{i<\lambda}\eta_{i}$ is a colimit of weak equivalences by induction hypothesis. By Proposition \ref{1y2} (2), an $\mathcal{F}_{\mathcal{O}}(I)$-cell complex is a colimit of a continuous diagram of cofibrations between cofibrant objects in $ \C C$, and the same is true  for $\mathcal{F}_{\mathcal{P}}(I)$-cell complexes. This applies to $A$ and $\phi_{*}A$.  Such  diagrams  are cofibrant objects in Reedy model categories of directed diagrams in $\C C$  \cite[Theorem 5.1.3]{hmc}. Therefore, $\eta_{A}$ is the colimit of a weak equivalence between cofibrant objects in the Reedy model category of directed diagrams in $\C C$ indexed by $\lambda$. Now, Ken Brown's lemma \cite[Lemma 1.1.12]{hmc} shows that $\eta_{A}$ is a weak equivalence, since $\colim_{i< \lambda}$ is a left Quillen functor \cite[Corollary 5.1.6]{hmc}.
\end{proof}

\section{An application to enriched categories and $A_{\infty}$-categories}\label{appl}

In this section we lay the foundations to construct model categories of categorified algebraic structures. This is applied to enriched categories and enriched $A_\infty$-categories.


\begin{defn}
Given a set $S$, an \emph{$S$-graph $M$ with object set $S$} is a collection of objects in $\C{V}$ indexed by $S\times S$,  $M=\{M(x,y)\}_{x,y \in S}$. The category $\graphs{\C{V}}{S}$ of $\C{V}$-graphs with object set $S$, where morphisms are defined in the obvious way, is biclosed monoidal with tensor product,
$$(M\otimes_{S} N)(x,y)= \coprod_{z\in S}M(z,y)\otimes N(x,z).$$
The unit object $\unit_{S}$ is 
$$\unit_{S}(x,y)=\left\{
\begin{array}{ll}
\unit,&\text{the monoidal unit of }\C{V},\text{ if }x=y;\\
0,&\text{the initial object of }\C{V},\text{ if }x\neq y.
\end{array}
\right.$$

This monoidal category  is clearly non-symmetric, unless $S$ is a singleton. The right adjoint of $M\otimes -$ is the functor $\hom_{l}^{S}(M,-)$ defined as
\begin{align*}
\hom_{l}^{S}(M,P)(x,y)&=\prod_{z\in S}\hom(M(y,z),P(x,z)),
\end{align*}
and the right adjoint of $-\otimes N$ is the functor $\hom_{r}^{S}(N,-)$ defined as
\begin{align*}
\hom_{r}^{S}(N,P)(x,y)&=\prod_{z\in S}\hom(N(z,x),P(z,y)).
\end{align*}

We have a strong braided monoidal functor $z\colon \C{V}\r\graphs{\C{V}}{S}$ defined as
$$z(A)(x,y)=\left\{
\begin{array}{ll}
A,&\text{if }x=y;\\
0,&\text{if }x\neq y.
\end{array}
\right.$$
Moreover, 
\begin{align*}
(z(A)\otimes_{S} M)(x,y)&=A\otimes M(x,y),&
( M\otimes_{S} z(A))(x,y)&=M(x,y)\otimes A,
\end{align*}
and the natural isomorphism
$$\zeta(A,M)\colon z(A)\otimes_{S} M\cong M\otimes_{S} z(A),$$
is defined as the symmetry isomorphism of $\C{V}$ coordinatewise.

\end{defn}

\begin{rem}
If $\C V$ is a model category, the category $\graphs{\C{V}}{S}$ inherits from~$\C{V}$ a product model category structure, where fibrations, cofibrations and weak equivalences are defined coordinatewise.  If $\C V$ is cofibrantly generated (resp. combinatorial) then so is $\graphs{\C{V}}{S}$, compare Remark \ref{jn}. Moreover, since $S\times S$ is a set, an $S$-graph $M$ is presentable provided $M(x,y)$ is presentable for all $x,y\in S$. In particular, if $\C V$ has sets of generating cofibrations and generating trivial cofibrations with presentable source, then so does $\graphs{S}{\C V}$.  Furthermore, 
if $\C V$ is right proper then the product model category $\graphs{S}{\C V}$ is also right proper.

Notice that the composite functor  $\C{V}\st{z}\r Z(\graphs{\C{V}}{S})\r\graphs{\C{V}}{S}$ preserves fibrations, cofibrations and weak equivalences, and it has a right adjoint defined by
$$M\mapsto \prod_{x\in S}M(x,x).$$
This adjoint pair is therefore a Quillen adjunction.
\end{rem}

\begin{prop}
If $\C{V}$ satisfies the monoid axiom  then $\graphs{\C{V}}{S}$ also satisfies the monoid axiom.
\end{prop}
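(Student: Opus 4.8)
Since $\graphs{\C{V}}{S}$ is non-symmetric, the relevant statement is the monoid axiom of Definition \ref{nsmax}, formulated with the $\odot$-product of $\graphs{\C{V}}{S}$ and the associated class, which I denote $K'_{S}$. The plan is to evaluate at each pair $(x,y)\in S\times S$ and reduce to $\C{V}$. In $\graphs{\C{V}}{S}$ fibrations, cofibrations, weak equivalences and all colimits are computed coordinatewise, so every evaluation functor $M\mapsto M(x,y)$ preserves colimits (hence pushouts and transfinite compositions) and detects weak equivalences. Thus it suffices to prove that each coordinate of a relative $K'_{S}$-cell complex is a weak equivalence in $\C{V}$. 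Here I use that, $\C{V}$ being symmetric, the monoid axiom of Definition \ref{max} is equivalent to the non-symmetric monoid axiom of Definition \ref{nsmax} for $\C{V}$, as observed after Definition \ref{nsmax}; in particular every relative $K'$-cell complex in $\C{V}$ (where $K'$ is the class of Definition \ref{nsmax}) is a weak equivalence.

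First I would work out the coordinatewise shape of an iterated $\odot$-product. For morphisms of graphs $f_i\colon U_i\r V_i$, $1\leq i\leq n$, the cube $f_1\otimes_S\cdots\otimes_S f_n\colon\dos^n\r\graphs{\C{V}}{S}$ evaluates, at $(x,y)$, to a coproduct over paths $x=a_0,a_1,\dots,a_n=y$ of the corresponding cubes $\dos^n\r\C{V}$, because $\otimes_S$ is a coproduct of $\otimes$-products along intermediate objects. Since $s(f_1\odot_S\cdots\odot_S f_n)$ is the colimit of this cube with its terminal vertex removed, and colimits commute with the coproduct over paths, evaluation gives
$$(f_1\odot_S\cdots\odot_S f_n)(x,y)\cong\coprod_{x=a_0,\dots,a_n=y}\bigodot_{i=1}^{n}f_i(a_{i-1},a_i),$$
a coproduct of iterated $\odot$-products in $\C{V}$ (I suppress the ordering of the factors, which is immaterial). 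When the generator lies in $K'_{S}$, there is a subset $S'\subseteq\{1,\dots,n\}$ with $\card S'\geq 1$ such that $f_i$ is a trivial cofibration of graphs for $i\in S'$ and $f_i\colon 0\r X_i$ for $i\notin S'$. Evaluating coordinatewise, $f_i(a_{i-1},a_i)$ is a trivial cofibration in $\C{V}$ for $i\in S'$, and for $i\notin S'$ it has the form $0\r X_i(a_{i-1},a_i)$, since the initial graph is coordinatewise the initial object of $\C{V}$. Therefore each summand $\bigodot_i f_i(a_{i-1},a_i)$ lies in the class $K'$ for $\C{V}$, with the same subset $S'$.

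Putting this together, let $X\colon\lambda\r\graphs{\C{V}}{S}$ be a relative $K'_{S}$-cell complex. Evaluation at $(x,y)$ is a transfinite composition in which each step $X_i(x,y)\r X_{i+1}(x,y)$ is a pushout of a coproduct $\coprod_{\text{paths}}\bigodot_i f_i(a_{i-1},a_i)$ of morphisms of $K'$; attaching these cells one at a time exhibits each such step, and hence the whole transfinite composition, as a relative $K'$-cell complex in $\C{V}$. By the monoid axiom for $\C{V}$ it is a weak equivalence, and coordinatewise detection of weak equivalences completes the proof. The main obstacle is the middle paragraph: deriving the coordinatewise formula for the iterated $\odot$-product and verifying that evaluation preserves the precise profile of factors (trivial cofibrations on $S'$, maps out of the initial object off $S'$) needed to land in $K'$. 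The remaining steps are routine consequences of the coordinatewise model structure and the stability of cell complexes under pushout and transfinite composition.
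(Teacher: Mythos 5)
Your proof is correct and follows essentially the same route as the paper's: evaluate coordinatewise, observe that each generator of the class $K'$ for $\graphs{\C{V}}{S}$ becomes (a coproduct over paths of) morphisms to which the monoid axiom of $\C{V}$ applies, and conclude using that colimits and weak equivalences in $\graphs{\C{V}}{S}$ are computed coordinatewise. The only cosmetic difference is that you land in the non-symmetric class $K'$ of Definition \ref{nsmax} for $\C{V}$ and invoke the remark that it is equivalent to $K$, whereas the paper rearranges directly into $K$ via the symmetry and push-out product axiom of $\C{V}$; your treatment of the coproduct-over-paths and the cell-by-cell attachment is in fact more careful than the paper's one-line argument.
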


\begin{proof}
It is enought to notice, using the symmetry of $\C{V}$ and the push-out product axiom in $\C{V}$, that any morphism $f_{1}\odot\cdots\odot f_{n}$ in the class of morphisms $K'$ of $\graphs{\C{V}}{S}$ in Definition \ref{nsmax} is componentwise a morphism in the class $K$ of $\C{V}$ in Definition \ref{max}.
\end{proof}

Categories enriched on $\C V$ with set of objects $S$ are the same as monoids in $\graphs{\C{V}}{S}$. These monoids are the same as algebras over the non-symmetric operad $\mathtt{Ass}^{\C V}$ in $\C{V}$ defined by $\mathtt{Ass}^{\C V}(n)=\unit$, $n\geq 0$. All compositions  in $\mathtt{Ass}^{\C V}$ are unit isomorphisms $\unit\otimes\unit\cong\unit$ and the unit opf the operad $u\colon\unit\r\mathtt{Ass}^{\C V}(1)$ is the identity.  This operad is generated by the `elements' in degree $0$ and $2$; the degree $2$ `element' represents the composition law, and the degree $0$ `element' represents the identities.  In order to simplify notation, we denote $$\cats{\C V}{S}=\algebra{\mathtt{Ass}^{\C V}}{\graphs{\C{V}}{S}}.$$

An $A_\infty$-category enriched on $\C V$ with set of objects $S$ is an algebra over a cofibrant replacement $\mathtt{Ass}^{\C V}_\infty$ of $\mathtt{Ass}^{\C V}$, which is a trivial fibration  $\phi\colon\mathtt{Ass}^{\C V}_\infty\st{\sim}
\onto\mathtt{Ass}^{\C V}$ in $\operad{\C V}$ with cofibrant source. We simply  denote $$\ainfcats{\C V}{S}=\algebra{\mathtt{Ass}_\infty^{\C V}}{\graphs{\C{V}}{S}}.$$

Combining the previous proposition with Theorem \ref{elt2} we obtain the following corollary, which improves \cite[Theorem 3.3]{lvc}.

\begin{cor}
Let $\C V$ be a cofibrantly generated closed symmetric monoidal category satisfying the monoid axiom. Suppose that $\C V$ has sets of generating cofibrations and generating trivial cofibrations with presentable source. Then $\cats{\C V}{S}$ is a model category where an enriched functor $F\colon C\r D$ is a weak equivalence (resp. fibration) if $F(x,y)\colon C(x,y)\r D(x,y)$ is  a weak equivalence (resp. fibration) in $\C V$ for all $x,y\in S$, and similarly for   $\ainfcats{\C V}{S}$. Moreover, these model categories are right proper (resp. combinatorial) provided $\C V$ is.
\end{cor}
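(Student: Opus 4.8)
The plan is to obtain both model structures as a single application of Theorem~\ref{elt2}, taking $\C C=\graphs{\C{V}}{S}$ and letting $\mathcal{O}$ be $\mathtt{Ass}^{\C V}$ in the first case and $\mathtt{Ass}_\infty^{\C V}$ in the second, since by definition $\cats{\C V}{S}=\algebra{\mathtt{Ass}^{\C V}}{\graphs{\C{V}}{S}}$ and $\ainfcats{\C V}{S}=\algebra{\mathtt{Ass}_\infty^{\C V}}{\graphs{\C{V}}{S}}$. Thus the whole proof reduces to verifying the hypotheses of that theorem for $\C C=\graphs{\C{V}}{S}$. Biclosedness is recorded in the definition of $\graphs{\C{V}}{S}$; the coordinatewise product model structure, cofibrant generation, presentability of the sources of the generating (trivial) cofibrations, and right properness are all furnished by the preceding remark; and the monoid axiom for $\graphs{\C{V}}{S}$ is the previous proposition. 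The strong braided monoidal functor $z\colon\C V\r Z(\graphs{\C{V}}{S})$ is exhibited in the definition above, and the remark shows that its composite with the forgetful functor to $\graphs{\C{V}}{S}$ preserves cofibrations and trivial cofibrations, hence is left Quillen.

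The only axiom not already recorded is the push-out product axiom of Definition~\ref{defm} for $\graphs{\C{V}}{S}$, which I would check coordinatewise. Since $\otimes_S$ is a coproduct over $z\in S$ of tensor products of $\C V$ and colimits in $\graphs{\C{V}}{S}$ are computed coordinatewise, for morphisms $f$ and $g$ in $\graphs{\C{V}}{S}$ there is a natural isomorphism
$$(f\odot g)(x,y)\cong\coprod_{z\in S}\bigl(f(z,y)\odot g(x,z)\bigr),$$
a coproduct of push-out products in $\C V$. If $f$ and $g$ are (trivial) cofibrations in $\graphs{\C{V}}{S}$, i.e.\ coordinatewise (trivial) cofibrations in $\C V$, then each factor on the right is a (trivial) cofibration by the push-out product axiom in $\C V$, hence so is their coproduct; therefore $f\odot g$ is a (trivial) cofibration in $\graphs{\C{V}}{S}$, as required.

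With all hypotheses in place, Theorem~\ref{elt2} endows $\algebra{\mathcal{O}}{\graphs{\C{V}}{S}}$ with a model structure whose fibrations and weak equivalences are exactly the morphisms that are fibrations, respectively weak equivalences, in $\graphs{\C{V}}{S}$; since the latter are detected coordinatewise in $\C V$, this matches the stated description of weak equivalences and fibrations of enriched functors $F\colon C\r D$. Taking $\mathcal{O}=\mathtt{Ass}^{\C V}$ gives the statement for $\cats{\C V}{S}$ and $\mathcal{O}=\mathtt{Ass}_\infty^{\C V}$ gives it for $\ainfcats{\C V}{S}$. Finally, the right properness and combinatoriality clauses follow by composing the two implications: by Theorem~\ref{elt2}, $\algebra{\mathcal{O}}{\graphs{\C{V}}{S}}$ is right proper (resp.\ combinatorial) whenever $\graphs{\C{V}}{S}$ is, and by the preceding remark $\graphs{\C{V}}{S}$ is right proper (resp.\ combinatorial) whenever $\C V$ is. I do not expect a genuine obstacle here: the sole computation is the coordinatewise reduction of the push-out product axiom displayed above, and everything else is a matter of citing the hypotheses into the right slots of Theorem~\ref{elt2}.
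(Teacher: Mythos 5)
Your proposal is correct and follows essentially the same route as the paper, which obtains the corollary precisely by feeding the preceding remark (coordinatewise model structure, cofibrant generation, presentable sources, right properness, combinatoriality, and the Quillen adjunction for $z$) and the preceding proposition (monoid axiom for $\graphs{\C{V}}{S}$) into Theorem~\ref{elt2} with $\C C=\graphs{\C{V}}{S}$ and $\mathcal{O}=\mathtt{Ass}^{\C V}$ or $\mathtt{Ass}^{\C V}_\infty$. Your explicit coordinatewise verification of the push-out product axiom, via the isomorphism $(f\odot g)(x,y)\cong\coprod_{z\in S}\bigl(f(z,y)\odot g(x,z)\bigr)$, is a correct filling-in of a hypothesis of Theorem~\ref{elt2} that the paper leaves implicit.
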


The following corollary also uses Theorem \ref{elt3}.

\begin{cor}
In the conditions of the previous corollary, assume in addition that $\C V$ is left proper 
 and the monoidal unit $\unit_{\C V}$ is cofibrant. Then the pull-back functor $\phi^*$ from enriched categories to enriched $A_\infty$-categories and the strictification functor $\phi_{*}$ in the other direction form a Quillen equivalence,
$$\xymatrix{\ainfcats{\C V}{S}\ar@<.5ex>[r]^-{\phi_{*}}& \cats{\C V}{S}.\ar@<.5ex>[l]^-{\phi^*}}$$
In particular, the derived adjoint pair is an equivalence between the homotopy categories of enriched categories and enriched $A_\infty$-categories, 
$$\xymatrix{\ho\ainfcats{\C V}{S}\ar@<.5ex>[r]^-{\mathbb{L}\phi_{*}}& \ho\cats{\C V}{S}.\ar@<.5ex>[l]^-{\phi^*}}$$
\end{cor}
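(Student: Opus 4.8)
The plan is to obtain the corollary as a direct application of Theorem \ref{elt3} to the weak equivalence $\phi\colon\mathtt{Ass}^{\C V}_\infty\st{\sim}\onto\mathtt{Ass}^{\C V}$, with $\C C=\graphs{\C V}{S}$ as ambient monoidal model category. First I would note that all hypotheses of Theorem \ref{elt2} for the pair $(\C V,\graphs{\C V}{S})$ are already in force, since they are exactly what the previous corollary invokes: the strong braided monoidal functor $z\colon\C V\r Z(\graphs{\C V}{S})$ has left Quillen composite to $\graphs{\C V}{S}$ (recorded in the preceding remark), $\graphs{\C V}{S}$ inherits the monoid axiom, and it is cofibrantly generated with presentable sources coordinatewise. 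It then remains only to verify the two additional hypotheses of Theorem \ref{elt3}: left properness of $\C C$, and that $\phi$ is a weak equivalence between operads whose components are all cofibrant in $\C V$.

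Left properness I would dispatch coordinatewise: $\graphs{\C V}{S}$ carries the product model structure indexed by $S\times S$, in which cofibrations, weak equivalences and push-outs are computed in each coordinate, so left properness of $\C V$ passes to $\graphs{\C V}{S}$ exactly as right properness does in the preceding remark. The substantive point is the cofibrancy of the arity components of the two operads, and this is precisely where the two new hypotheses enter. For the target, $\mathtt{Ass}^{\C V}(n)=\unit_{\C V}$ is cofibrant because we now assume $\unit_{\C V}$ cofibrant. For the source, I would prove the auxiliary claim that, whenever $\unit_{\C V}$ is cofibrant, every cofibrant operad has cofibrant components; applied to the cofibrant operad $\mathtt{Ass}^{\C V}_\infty$ this furnishes the missing cofibrancy.

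To prove the auxiliary claim, since cofibrant operads are retracts of $\mathcal{F}(I_{\mathbb{N}})$-cell complexes and cofibrant objects of $\C V$ are closed under retracts, it suffices to treat $\mathcal{F}(I_{\mathbb{N}})$-cell complexes, which I would do by transfinite induction along the cell structure using the explicit push-out of Section \ref{ropo}. The base case is the initial operad $\unit_{\circ}$, whose components are $\unit_{\C V}$ and $0$, both cofibrant; at a limit stage one uses continuity of the cell complex. For a single cell attachment along $\mathcal{F}(i_{\mathbb{N}})$ with $i_{\mathbb{N}}\in I_{\mathbb{N}}$, Lemma \ref{pind} together with Theorem \ref{aux1} presents the underlying map $\mathcal{O}(n)\r\mathcal{P}(n)$ as a transfinite composition of push-outs of the morphisms \eqref{monstruo}; by the push-out product axiom each such morphism is a coproduct of tensor products of an iterated $\odot$-product of $i_{\mathbb{N}}$ (a cofibration) with the object $\bigotimes_{w\in I^o(T)}\mathcal{O}(\val{T}{w})$, which is cofibrant by the inductive hypothesis and the remark after Definition \ref{defm}, hence is itself a cofibration. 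Thus $\mathcal{O}(n)\r\mathcal{P}(n)$ is a cofibration and cofibrancy of components propagates. With these checks complete, Theorem \ref{elt3} delivers the Quillen equivalence and the induced equivalence of homotopy categories.

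The hard part will be the auxiliary cofibrancy statement, and the subtlety to watch there is that the sources of the generating cofibrations $i_{\mathbb{N}}$ need not be cofibrant: one cannot claim that the morphisms \eqref{monstruo} have cofibrant source, only that they are cofibrations obtained by tensoring a cofibration with a cofibrant factor. It is therefore the closure of cofibrations under coproducts, push-out along maps from cofibrant objects, and transfinite composition --- not any control on sources --- that keeps the components cofibrant throughout the cell construction.
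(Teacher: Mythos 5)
Your proposal is correct and is essentially the paper's own argument: the paper proves this corollary simply by citing Theorem \ref{elt3}, applied to $\phi\colon\mathtt{Ass}^{\C V}_\infty\r\mathtt{Ass}^{\C V}$ with ambient category $\C{C}=\graphs{\C{V}}{S}$, leaving all verifications to the reader. Your write-up supplies exactly the checks that are implicit there --- coordinatewise left properness of $\graphs{\C{V}}{S}$, cofibrancy of $\mathtt{Ass}^{\C V}(n)=\unit_{\C V}$, and above all the auxiliary claim that a cofibrant operad has cofibrant components when $\unit_{\C V}$ is cofibrant, whose cell-induction proof via Lemma \ref{pind}, Theorem \ref{aux1} and the push-out product axiom is sound (including your caveat that one only needs closure properties of cofibrations under coproducts, push-outs and transfinite composition, not cofibrancy of the sources of the generating cofibrations).
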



\providecommand{\bysame}{\leavevmode\hbox to3em{\hrulefill}\thinspace}
\providecommand{\MR}{\relax\ifhmode\unskip\space\fi MR }
\providecommand{\MRhref}[2]{%
  \href{http://www.ams.org/mathscinet-getitem?mr=#1}{#2}
}
\providecommand{\href}[2]{#2}

\end{document}